\numberwithin{equation}{section}
\newtheorem{theorem}{Theorem}[section]
\newtheorem{claim}[theorem]{Claim}
\newtheorem{corollary}[theorem]{Corollary}
\newtheorem{lemma}[theorem]{Lemma}
\newtheorem{remark}[theorem]{Remark}
\newenvironment{proof}[1][Proof]{\noindent\textbf{#1.} }{\ \rule{0.5em}{0.5em}}
\begin{document}
\begin{frontmatter}

\title{A Novel Discriminant Approximation of Periodic Differential Equations}

\author[rvt]{C.A.~Franco\corref{cor1}\fnref{fn1}}
\ead{cfranco@ctrl.cinvestav.mx}
\author[rvt]{J.~Collado}
\ead{jcollado@ctrl.cinvestav.mx}

\cortext[cor1]{Corresponding author}
\fntext[fn1]{The first author, acknowledges the financial support of CONACyt and CINVESTAV.}
\address[rvt]{Centro de Investigacion y de Estudios Avanzados del Instituto Politecnico Nacional, Mexico City, Mexico. Instituto Politecnico Nacional Av. No 2508, 07360}

\begin{abstract}
A new approximation of the discriminant of a second order periodic
differential equation is presented as a recursive summation of the evaluation of its excitation function at different values of time. The new
approximation is obtained, at first, by means of Walsh functions and then,
by using some algebraic properties the dependence on the Walsh functions is
eliminated. This new approximation is then used to calculate the boundaries
of stability. We prove that by letting the summation elements number approach to infinite, the discriminant approximation can be
rewritten as a summation of definite integrals. Finally we prove that the
definite integrals summation is equivalent to the discriminant approximation
made by Lyapunov which consists in an alternating series of coefficients
defined by multiple definite integrals, that is, a series of the form $%
A=A_{0}-A_{1}+\ldots +\left( -1\right) ^{n}A_{n}$, where each coefficient $%
A_{n}$ is defined as an $n-$multiple definite integral.
\end{abstract}

\begin{keyword}
%% keywords here, in the form: keyword \sep keyword
Hill equation \sep Discriminant approximation \sep Walsh functions
%% PACS codes here, in the form: \PACS code \sep code

%% MSC codes here, in the form: \MSC code \sep code
%% or \MSC[2008] code \sep code (2000 is the default)

\end{keyword}

\end{frontmatter}
\begin{center}
\section*{Notation}
\end{center}

{\footnotesize 
\begin{tabular}{lllll}
\multicolumn{2}{l}{$\ddot{x}+\left( \alpha +\beta p\left( t\right) \right)
x=0$} & \multicolumn{2}{l}{Hill Equation} &  \\ 
$t$ & Independent Variable &  & $e_{2^{k}}$ & $=\left[ 
\begin{array}{cccc}
0 & 0 & \ldots  & 1%
\end{array}%
\right] ^{T}$ \\ 
$p\left( t\right) $ & $=p\left( t+\tau \right) ,$ Excitation Function &  & $e
$ & $=\left[ 
\begin{array}{cccc}
1 & 1 & \ldots  & 1%
\end{array}%
\right] ^{T}$ \\ 
$\tau $ & Differential Equation Period &  & $\Lambda _{\gamma }$ & $p\left(
t\right) $ Sampling Matrix \\ 
$\phi _{i}$ & Characteristic Multipliers &  & $\Gamma $, $\Gamma _{P}$ & 
Discriminant Sampling Matrices \\ 
$\Delta \left( \alpha ,\beta \right) $ or $\Delta \left( \lambda \right) $ & 
Discriminant &  & $\bar{\Lambda}_{\bar{r}}$ & $=W_{H}\Lambda _{\bar{r}}W_{H}$
\\ 
$A$ & Lyapunov Characteristic Constant &  & $\bar{P}$ & $=W_{H}PW_{H}$ \\ 
$\Phi \left( t,t_{0}\right) $ & State Transition Matrix &  & $\bar{P}^{-1}$
& $=W_{H}\left( P^{-1}\right) W_{H}$ \\ 
$M$ & Monodromy Matrix ($\Phi \left( \tau ,0\right) $) &  & $\bar{\Gamma}$ & 
$=W_{H}^{-1}\Gamma W_{H}$ \\ 
$\lambda _{i}$, $\lambda _{i}^{\prime }$ & $\left\vert \Delta \left( \lambda
\right) \right\vert =2$, $\left\vert \Delta \left( \lambda \right)
\right\vert =-2$ &  & $\bar{\Gamma}_{P}$ & $=W_{H}^{-1}\Gamma _{P}W_{H}$ \\ 
$w_{n}\left( t\right) $ & $n-th$ Walsh Function &  & $b_{n}$ & $2^{k}-n$
entry of the last column of $\bar{\Gamma}$ \\ 
$\bar{w}_{n}\left( t\right) $ & Vector of Walsh Functions $\in 
%TCIMACRO{\U{211d} }%
%BeginExpansion
\mathbb{R}
%EndExpansion
^{n\times 1}$ &  & $c_{n}$ & $2^{k}-n$ entry of the last column of $\bar{%
\Gamma}_{P}$ \\ 
$W_{H}$ & Walsh Matrix &  & $S_{h}$ & $=\sum_{i=0}^{h}b_{i}$ \\ 
$P$ & Walsh Function Integration Operator &  & $Z_{h}$ & $%
=\sum_{i=0}^{h}c_{i}$ \\ 
$\nu _{n}$, $r_{n}$, \ldots  & Real Constants &  & $\psi _{h}$ & $=\frac{%
4\tau ^{2}}{2^{2k+2}+\tau ^{2}\left( \alpha +\beta p_{2^{k}-h}\right) }$ \\ 
$\bar{\nu}$, $\bar{r}$, \ldots  & Real Constant Vectors &  & $\xi _{h}$ & $=\alpha +\beta p_{2^{k}-h}$ \\ 
$I_{n}$ & Identity Matrix &  & $\mu _{h}$ & $=\alpha +\frac{\beta }{2}\left(
p_{2^{k}-h}+p_{2^{k}-h+1}\right) $ \\ 
$\Lambda _{i}^{\left( m\right) }$ & Permutation Matrix &  & $\delta $ & $=%
\frac{\tau }{2^{k}}$ \\ 
$p_{n}$ & $=p\left( n\frac{\tau }{2^{k}}\right) $ &  & $q\left( t\right) $ & 
$=\alpha +\beta p\left( t\right) $ \\ 
$e_{1}$ & $=\left[ 
\begin{array}{cccc}
1 & 0 & \ldots  & 0%
\end{array}%
\right] ^{T}$ &  &  & 
\end{tabular}
}

\section{Introduction}

Linear periodic differential equations can describe the dynamical behaviour
of a large number of mechanical systems. They arise quite frequently as the
result of linearising a non-linear system about a periodic solution. An
important second order example is the Hill equation%
\begin{equation}
\ddot{x}+\left( \alpha +\beta p\left( t\right) \right) x=0\text{, }p\left(
t+\tau \right) =p\left( t\right)  \label{Hi}
\end{equation}%
where $\tau $ is the minimum period of the excitation function $p\left(
t\right) $, and $p\left( t\right) $ is assumed piecewise continuous.
Equation (\ref{Hi}) has been used to describe problems in engineering and
physics, including problems in mechanics, astronomy and the theory of
electric circuits \cite{Richards,dvorak}. As is well known the
analytic solution of (\ref{Hi}) can not been obtained and the analysis of
stability hinge on the numerical calculation of its solutions.

We can prove that the stability of the solutions of the periodic
differential equation (\ref{Hi}) is defined by an autonomous function which
depends only on its constant parameters $\alpha $ and $\beta $, and on the
minimum period of its excitation function $p\left( t\right) $. This
remarkable function is known as the discriminant associated to the Hill
equation (\ref{Hi}) and it is denoted as $\Delta \left( \alpha ,\beta
\right) $. In \cite{Lyapunov} Lyapunov obtained an approximation of the
discriminant in terms of the alternating series\footnote{%
The discriminant was denoted by Lyapunov as the letter $A$. Most of the
literature uses the $\Delta \left( \alpha ,\beta \right) $ notation. The
main difference between both of them is that the former is one half of the
latter, that is $A=\frac{1}{2}\Delta \left( \alpha ,\beta \right) $.}%
\begin{equation}
A=A_{0}-A_{1}+A_{2}-A_{3}+\ldots +\left( -1\right) ^{n}A_{n}+\ldots
\label{sss}
\end{equation}%
the coefficients $A_{k}$ are defined, applying successive approximation \cite%
{Luenb}, as%
\begin{eqnarray*}
A_{0} &=&1 \\
A_{1} &=&\frac{\tau }{2}\int_{0}^{\tau }q\left( t\right) dt \\
A_{2} &=&\frac{1}{2}\int_{0}^{\tau }\int_{0}^{t_{1}}\left( \tau
-t_{1}+t_{2}\right) \left( t_{1}-t_{2}\right) q\left( t_{1}\right) q\left(
t_{2}\right) dt_{2}dt_{1} \\
&&\vdots \\
A_{n} &=&\frac{1}{2}\int_{0}^{\tau }\int_{0}^{t_{1}}\ldots
\int_{0}^{t_{n-1}}\left( \tau -t_{1}+t_{n}\right) \left( t_{1}-t_{2}\right)
\ldots \left( t_{n-1}-t_{n}\right) q\left( t_{1}\right) \ldots q\left(
t_{n}\right) dt_{1}dt_{2}\ldots dt_{n}
\end{eqnarray*}%
where $q\left( t\right) =\alpha +\beta p\left( t\right) $ and $\tau $ is the
minimum period of $p\left( t\right) $, i.e. $q\left( t+\tau \right) =q\left(
t\right) $. In \cite{LyapunovApp} Lyapunov did a detailed analysis of his
approximation. He obtained the series (\ref{sss}) by considering the equation 
\[
\ddot{x}+\lambda q\left( t\right) x=0 
\]%
instead of (\ref{Hi}), and then expanding the characteristic constant $%
A\left( \lambda \right) $ in powers of $\lambda $, i.e.\ $A\left( \lambda
\right) =A_{0}-\lambda A_{1}+\lambda ^{2}A_{2}\ldots +\left( -\lambda
\right) ^{n}A_{n}+\ldots $, and then setting $\lambda =1$ for obtaining (\ref%
{sss}). Some other discriminant approximation may be found in \cite%
{hochstadt2} and in \cite{shi}, the former approximation is based on some
properties of the Sturm-Liouville problem and the latter is based on the
successive approximation method.

For clarifying the notion of discriminant $\Delta \left( \alpha ,\beta
\right) $, let $x_{1}$ and $x_{2}$ be two solutions of (\ref{Hi}) subject to
the initial conditions%
\begin{eqnarray*}
x_{1}\left( 0\right) &=&1\text{, }x_{2}\left( 0\right) =0 \\
\dot{x}_{1}\left( 0\right) &=&0\text{, }\dot{x}_{2}\left( 0\right) =1
\end{eqnarray*}%
It is known \cite{MagnWin} that if $\rho $ is solution of the characteristic
equation 
\[
\rho ^{2}-\left( x_{1}\left( \tau \right) +\dot{x}_{2}\left( \tau \right)
\right) \rho +1=0 
\]%
then (\ref{Hi}) has at least one solution of the form%
\begin{equation}
x\left( \tau +t\right) =\rho x\left( t\right)  \label{s}
\end{equation}%
the constant $\Delta \left( \alpha ,\beta \right) =x_{1}\left( \tau \right) +%
\dot{x}_{2}\left( \tau \right) $ is known as the discriminant of (\ref{Hi}).

The roots of the characteristic equation are%
\[
\rho _{1,2}=\frac{\Delta \left( \alpha ,\beta \right) \pm \sqrt{\Delta
\left( \alpha ,\beta \right) ^{2}-4}}{2} 
\]%
from where one can notice that: If $-2<\Delta \left( \alpha ,\beta \right)
<2 $, the roots $\rho _{1,2}$ are complex conjugated numbers and lie on the
unitary circle, thus $x_{1}\left( t\right) $ and $x_{2}\left( t\right) $ are
bounded so all the solutions of (\ref{Hi}) are bounded: If $\Delta \left(
\alpha ,\beta \right) =\pm 2$, $\rho =\pm 1$, there is at least one $\tau $%
-periodic or $\tau $-anti periodic solution\footnote{%
A solution $x\left( t\right) $ is $\tau $-anti periodic if $x\left( t+\tau
\right) =-x\left( t\right) $.} and if the modulo of $\Delta \left( \alpha
,\beta \right) $ is greater than $2$ then, there is one bounded and one
unstable solution. So the stability of the solutions of (\ref{Hi}) are
determined by its discriminant $\Delta \left( \alpha ,\beta \right) $, for
further information see section 2.

The aim of the present work is: a) to introduce a new approximation for the
discriminant $\Delta \left( \alpha ,\beta \right) $. This is accomplished by
doing a series of assumptions on a new approximation obtained by means of
Wash series and then reducing it to a recursive summation, in terms of
evaluation of the excitation function at different values of time $t_{n}\in %
\left[ 0,\tau \right] $, $n=1,2,\ldots $; and, b) based on the new form of $%
\Delta \left( \alpha ,\beta \right) $, we give an alternative proof to the
discriminant approximation obtained by Lyapunov in \cite{Lyapunov} and \cite%
{LyapunovApp}. The new approximation may be seen as a "discrete" form of the
Lyapunov approximation.

This work is structured as follows: In section 2 we give a brief
introduction to the theory of periodic differential equations and Walsh series;
Section 3 and 4 are dedicated to give a rough approximation of the
discriminant of (\ref{Hi}); in section 5 we give the new approximation of $%
\Delta \left( \alpha ,\beta \right) $ and the alternative proof of the
Lyapunov approximation is done; and in section 6 we obtain the transition curves of a Hill equation.

\section{Preliminaries}

\subsection{Background for 2nd order linear periodic differential equations}

Consider the linear periodic equation $\ddot{x}+\left( \alpha +\beta p\left(
t\right) \right) x=0$ where $p\left( \tau +t\right) =p\left( t\right) $, by
the usual change of variable $z_{1}=x$ and $z_{2}=\dot{x}$ the differential
equation can be rewritten as 
\begin{equation}
\left[ 
\begin{array}{c}
\dot{z}_{1} \\ 
\dot{z}_{2}%
\end{array}%
\right] =\left[ 
\begin{array}{cc}
0 & 1 \\ 
-\left( \alpha +\beta p\left( t\right) \right) & 0%
\end{array}%
\right] \left[ 
\begin{array}{c}
z_{1} \\ 
z_{2}%
\end{array}%
\right]  \label{zzz}
\end{equation}

It is well known that the general solution of (\ref{zzz}) is the state
transition matrix $\Phi \left( t,t_{0}\right) $ whose columns are linearly
independent solutions of (\ref{zzz}) subject to the initial conditions $\Phi
\left( t_{0},t_{0}\right) =I_{2}$, in other words, let $x_{1}$ and $x_{2}$
be two linearly solutions of the periodic equation $\ddot{x}+\left( \alpha
+\beta p\left( t\right) \right) x=0$ subject to the initial conditions 
\[
\begin{array}{cc}
x_{1}\left( t_{0}\right) =1 & x_{2}\left( t_{0}\right) =0 \\ 
\dot{x}_{1}\left( t_{0}\right) =0 & \dot{x}_{2}\left( t_{0}\right) =1%
\end{array}%
\]%
then the state transition matrix is 
\[
\Phi \left( t,t_{0}\right) =\left[ 
\begin{array}{cc}
x_{1}\left( t\right) & x_{2}\left( t\right) \\ 
\dot{x}_{1}\left( t\right) & \dot{x}_{2}\left( t\right)%
\end{array}%
\right] \text{, \ \ \ \ \ \ \ \ \ }t\in \left( -\infty ,\infty \right) 
\]%
The importance of the matrix $\Phi \left( t,t_{0}\right) $ lies in the fact
that all solutions of any differential equation, in particular our equation (%
\ref{zzz}), can be expressed in terms of the transition matrix as $z\left(
t\right) =\Phi \left( t,t_{0}\right) z\left( t_{0}\right) $, for all $%
z\left( t_{0}\right) $.

The matrix $\Phi \left( t,t_{0}\right) $ of a periodic system may be written
as a multiplication of three matrices, two of them are time dependent
matrices and the other is a constant matrix; it can be proved that one of
the time dependent matrices is bounded and periodic, and the other is an
exponential one: the former gives us information about the phase of the
solutions; and the latter contains information about the growth of the
solutions, see \cite{Gelfandlidski,Yak_Str}. The next Theorem, due
to Floquet, gives us the mentioned factorization

\begin{theorem} \label{theorem1}
The state transition matrix $\Phi \left( t,t_{0}\right) \in 
%TCIMACRO{\U{211d} }%
%BeginExpansion
\mathbb{R}
%EndExpansion
^{2\times 2}$ associated to the periodic differential equation (\ref{zzz})
has the form%
\[
\Phi \left( t,t_{0}\right) =P^{-1}\left( t\right) e^{B\left( t-t_{0}\right)
}P\left( t_{0}\right) 
\]%
where $P\left( t\right) $ and $B$ are $2\times 2$ matrices, $P\left( t+\tau
\right) =P\left( t\right) $ and $B$ is a constant matrix, not necessarily
real.
\end{theorem}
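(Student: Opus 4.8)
The plan is to exploit the $\tau$-periodicity of the coefficient matrix of (\ref{zzz}), then to extract the constant factor $B$ as a matrix logarithm of the monodromy matrix, and finally to absorb the remaining time dependence into a periodic factor. It is convenient to work with the fundamental matrix anchored at the origin, so write (\ref{zzz}) as $\dot{z}=A\left( t\right) z$ with $A\left( t+\tau \right) =A\left( t\right) $, set $X\left( t\right) :=\Phi \left( t,0\right) $, and note that $\Phi \left( t,t_{0}\right) =X\left( t\right) X^{-1}\left( t_{0}\right) $ and $M=X\left( \tau \right) $.

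First I would show that $X\left( t+\tau \right) $ is again a fundamental matrix of (\ref{zzz}). Putting $Y\left( t\right) =X\left( t+\tau \right) $ and differentiating, the periodicity $A\left( t+\tau \right) =A\left( t\right) $ gives $\dot{Y}\left( t\right) =A\left( t+\tau \right) Y\left( t\right) =A\left( t\right) Y\left( t\right) $, so $Y$ solves the same matrix equation. Since $X\left( t\right) $ is nonsingular for all $t$ (its determinant is constant and nonzero, being equal to $1$ here because the coefficient matrix is traceless), there is a constant nonsingular matrix $C$ with $Y\left( t\right) =X\left( t\right) C$; evaluating at $t=0$ yields $C=X\left( \tau \right) =M$. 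Hence $X\left( t+\tau \right) =X\left( t\right) M$, and in particular $M$ is invertible.

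Next I would choose a constant matrix $B$ with $e^{B\tau }=M$, and this is where the only genuine obstacle lies. Because $M$ is nonsingular, it admits a logarithm, so such a $B$ exists; however it need not be real (for example when $M$ has a negative real eigenvalue), which is precisely the caveat in the statement. With $B$ fixed, define $P^{-1}\left( t\right) :=X\left( t\right) e^{-Bt}$ and verify periodicity directly:
\[
P^{-1}\left( t+\tau \right) =X\left( t+\tau \right) e^{-B\left( t+\tau \right) }=X\left( t\right) Me^{-B\tau }e^{-Bt}=X\left( t\right) e^{-Bt}=P^{-1}\left( t\right) ,
\]
using $Me^{-B\tau }=I$. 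Thus $P\left( t\right) $ is $\tau $-periodic and invertible.

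Finally, substituting back from $X\left( t\right) =P^{-1}\left( t\right) e^{Bt}$ gives
\[
\Phi \left( t,t_{0}\right) =X\left( t\right) X^{-1}\left( t_{0}\right) =P^{-1}\left( t\right) e^{Bt}e^{-Bt_{0}}P\left( t_{0}\right) =P^{-1}\left( t\right) e^{B\left( t-t_{0}\right) }P\left( t_{0}\right) ,
\]
which is the claimed factorization, with $P\left( t\right) $ periodic and $B$ constant. I expect the periodicity computation and the substitution to be routine; the essential point, and the reason $B$ may fail to be real, is the existence of the matrix logarithm of $M$.
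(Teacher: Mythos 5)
Your proof is correct and complete: it is the standard Floquet argument (show $X(t+\tau)=X(t)M$ via periodicity of the coefficient matrix, take $B=\frac{1}{\tau}\log M$, which exists since $M$ is nonsingular, and absorb the remainder into the $\tau$-periodic factor $P^{-1}(t)=X(t)e^{-Bt}$), and the paper itself gives no proof at all, deferring entirely to the reference \cite{Brocket}, where essentially this same argument appears. One minor imprecision worth noting: a negative real eigenvalue of $M$ does not by itself preclude a real logarithm (e.g.\ $M=-I_{2}$ has one, via a rotation generator), though your overall caveat that $B$ need not be real is exactly right and is all the theorem requires.
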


\begin{proof}
The proof can be found in \cite{Brocket}.
\end{proof}

If$\ $we set $t_{0}=0$ then the factorization made in Theorem \ref{theorem1} is reduced
to $\Phi \left( t,0\right) =P^{-1}\left( t\right) e^{Bt}$, this follows from
the fact that $P^{-1}\left( t\right) =\Phi \left( t,0\right) e^{-Bt}$ so if $%
t=0$ then, $P\left( 0\right) =I_{2}$.

Theorem \ref{theorem1} implies the following: for $t>0$, set $t=k\tau +t_{1}$, $t_{1}\in [ 0,\tau ) $ where $k$ is a non-negative integer and $t_{0}=0$,
by the property $\Phi \left( t_{2},t_{0}\right) =\Phi \left(
t_{2},t_{1}\right) \Phi \left( t_{1},t_{0}\right) $ we can write any
solution of (\ref{zzz}) as 
\begin{eqnarray*}
z\left( t\right) &=&\Phi \left( t,0\right) z\left( 0\right) \\
&=&\Phi \left( k\tau +t_{1},0\right) z\left( 0\right) \\
&=&\Phi \left( k\tau +t_{1},\left( k-1\right) \tau \right) \Phi \left(
\left( k-1\right) \tau ,\left( k-2\right) \tau \right) \ldots \Phi \left(
\tau ,0\right) z\left( 0\right)
\end{eqnarray*}%
defining a matrix $M=\Phi \left( \tau ,0\right) $, the solution $z\left(
t\right) $ is 
\begin{equation}
z\left( t\right) =\Phi \left( t_{1},0\right) M^{k}z\left( 0\right)
\label{solu}
\end{equation}%
then, the stability of any solution $z\left( t\right) $ depends on the
matrix $M$, i.e. since $\Phi \left( t_{1},0\right) $ for $t_{1}\in \left[
0,\tau \right) $ and $z\left( 0\right) $ are bounded and using the
well-known fact that: if $\sigma \left( M\right) =\left\{ \rho _{1},\rho
_{2},\ldots ,\rho _{n}\right\} $, then $\sigma \left( M^{k}\right) =\left\{
\rho _{1}^{k},\rho _{2}^{k},\ldots ,\rho _{n}^{k}\right\} $, the only factor of the solution that could grow without bound, as time $t$ increases, is the
matrix $M^{k}$, the matrix $M$ is known as the monodromy matrix associated
to (\ref{zzz}). In fact, from equation (\ref{solu}) we can notice that one
can obtain any solution of (\ref{zzz}) by only knowing the state transition
matrix at the interval $t\in \left[ 0,\tau \right] $. And then, we can state
the following

\begin{lemma} \label{lemma2}
Let $\mathbb{\rho }_{i}$ be the eigenvalues of the monodromy matrix $M$ then
the solutions of (\ref{zzz}) are

\begin{itemize}
\item Asymptotically stable if and only if all $\left\vert \mathbb{\rho }%
_{i}\right\vert <1$

\item Stable if and only if all $\left\vert \mathbb{\rho }_{i}\right\vert
\leq 1$, and if any $\mathbb{\rho }_{i}$ has modulo one, it must be a simple
root of the minimal polynomial of $M$.

\item Unstable if and only if there is a $\mathbb{\rho }_{i}$ such that $%
\left\vert \mathbb{\rho }_{i}\right\vert >1$ or if all $\left\vert \mathbb{%
\rho }_{i}\right\vert \leq 1$ but one $\mathbb{\rho }_{j}:\left\vert \mathbb{%
\rho }_{j}\right\vert =1$ and $\mathbb{\rho }_{j}$ is a multiple root of the
minimal polynomial of $M$.
\end{itemize}
\end{lemma}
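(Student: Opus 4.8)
The plan is to reduce the stability of every solution to the asymptotic behaviour of the matrix powers $M^{k}$, and then to read off that behaviour from the Jordan canonical form of $M$. First I would exploit the representation (\ref{solu}): writing $t=k\tau+t_{1}$ with $t_{1}\in[0,\tau)$, every solution is $z\left( t\right) =\Phi \left( t_{1},0\right) M^{k}z\left( 0\right) $. Since $\Phi \left( t_{1},0\right) $ is continuous on the compact interval $[0,\tau]$ it is bounded there, and it is invertible for each $t_{1}$, so $\Phi\left(t_{1},0\right)^{-1}$ is bounded on $[0,\tau]$ as well; hence the norm of $z\left( t\right) $ is bounded above and below by positive multiples of the norm of $M^{k}z\left( 0\right) $. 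Consequently the solutions are bounded (respectively, decay to $0$, respectively, admit an unbounded one) exactly when the sequence $M^{k}$ is bounded (respectively, tends to $0$, respectively, is unbounded) as $k\to\infty$.

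Next I would pass to the Jordan form $M=TJT^{-1}$, so that $M^{k}=TJ^{k}T^{-1}$ and it suffices to control $J^{k}$. For a single Jordan block of size $m$ attached to an eigenvalue $\rho$, writing it as $\rho I+N$ with $N$ nilpotent gives entries of the form $\binom{k}{j}\rho^{k-j}$ for $0\le j\le m-1$; their size is therefore governed by $|\rho|^{k}$ multiplied by a polynomial in $k$ of degree at most $m-1$. From this three regimes appear: if $|\rho|<1$ every such entry tends to $0$; if $|\rho|>1$ the entries blow up; and if $|\rho|=1$ the block stays bounded precisely when $m=1$, whereas for $m\ge 2$ the factor $\binom{k}{j}$ forces polynomial growth.

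Assembling the cases then yields the three clauses. Asymptotic stability is equivalent to $M^{k}\to 0$, which by the block analysis happens if and only if $|\rho_{i}|<1$ for all $i$. For mere stability one needs $M^{k}$ bounded, i.e. $|\rho_{i}|\le 1$ for all $i$ together with the requirement that every eigenvalue of modulus one contributes only $1\times 1$ Jordan blocks; the remaining situations are the logical complement and give instability. The one point deserving care, and the main obstacle, is to translate the Jordan-block condition into the stated condition on the minimal polynomial. For this I would invoke the fact that the minimal polynomial of $M$ factors as $\prod_{i}\left(\lambda-\rho_{i}\right)^{m_{i}}$, where $m_{i}$ is exactly the size of the largest Jordan block attached to $\rho_{i}$. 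Thus the assertion that an eigenvalue $\rho_{i}$ with $|\rho_{i}|=1$ is a simple root of the minimal polynomial is equivalent to all its Jordan blocks having size one, which is precisely what guarantees boundedness without growth; carefully identifying these two formulations is the crux of the argument.
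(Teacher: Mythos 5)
Your proof is correct, and it fills a genuine gap: the paper states Lemma \ref{lemma2} with no proof environment at all, offering only the informal discussion preceding it, namely the reduction $z\left(t\right)=\Phi\left(t_{1},0\right)M^{k}z\left(0\right)$ of (\ref{solu}), the boundedness of $\Phi\left(t_{1},0\right)$ on $\left[0,\tau\right)$, and the spectral-mapping remark $\sigma\left(M^{k}\right)=\left\{\rho_{1}^{k},\ldots,\rho_{n}^{k}\right\}$, concluding loosely that ``the only factor that could grow is $M^{k}$.'' You begin with the same reduction, but you supply the two ingredients this discussion omits and that the ``if and only if'' claims actually require. First, the paper's observation only bounds $\left\Vert z\left(t\right)\right\Vert$ above by a multiple of $\left\Vert M^{k}z\left(0\right)\right\Vert$, which yields the ``if'' directions; your lower bound, from invertibility of $\Phi\left(t_{1},0\right)$ and continuity on the compact interval $\left[0,\tau\right]$, is what lets unboundedness of $M^{k}$ transfer back to an unbounded solution, giving the ``only if'' directions. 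Second, the spectral-mapping fact alone cannot settle the boundary case $\left\vert\rho_{i}\right\vert=1$, because the eigenvalues of $M^{k}$ carry no information about Jordan block sizes; your computation of $J^{k}$ for a block $\rho I+N$, with entries $\binom{k}{j}\rho^{k-j}$ growing polynomially when $\left\vert\rho\right\vert=1$ and $m\geq 2$, together with the standard identification of the exponent of $\left(\lambda-\rho_{i}\right)$ in the minimal polynomial with the largest Jordan block size, is exactly what justifies the minimal-polynomial clauses in the stability and instability statements. The only step you use tacitly is the standard equivalence, for linear systems, between Lyapunov stability and boundedness of all solutions (and between asymptotic stability and decay to zero), which is harmless here. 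As a side check, your argument is consistent with Remark \ref{remark3}: since (\ref{zzz}) is Hamiltonian, $\det M=1$ by Liouville's theorem, so the asymptotic-stability alternative is vacuous for this equation, though your proof correctly covers it in the general formulation of the lemma.
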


\begin{remark} \label{remark3}
Notice that the system (\ref{zzz}) can be written as a Hamiltonian system%
\[
\left[ 
\begin{array}{c}
\dot{z}_{1} \\ 
\dot{z}_{2}%
\end{array}%
\right] =\left[ 
\begin{array}{cc}
0 & 1 \\ 
-1 & 0%
\end{array}%
\right] \left[ 
\begin{array}{cc}
\left( \alpha +\beta p\left( t\right) \right) & 0 \\ 
0 & 1%
\end{array}%
\right] \left[ 
\begin{array}{c}
z_{1} \\ 
z_{2}%
\end{array}%
\right] 
\]%
thus, its solutions cannot be asymptotically stable, they can only be
bounded or unstable. Moreover, the state transition matrix is a symplectic
matrix therefore, its eigenvalues are symmetric with respect to the unitary
circle \cite{meyer}. The condition $\mathbb{\rho }_{i}=\pm 1$ implies that
there exists at least one periodic solution ($\mathbb{\rho }_{i}=1$) or
anti-periodic solution ($\mathbb{\rho }_{i}=-1$).
\end{remark}

We will expand the last part of the latter remark, but first notice that if $%
M\in 
%TCIMACRO{\U{211d} }%
%BeginExpansion
\mathbb{R}
%EndExpansion
^{2\times 2}$ then its characteristic equation is%
\[
\det \left( \mathbb{\rho }I_{2}-M\right) =\mathbb{\rho }^{2}-Trace\left(
M\right) \mathbb{\rho }+1 
\]%
\qquad\ the independent term is equal to $1$ because of the Liouville
theorem \cite{Lancaster}, and the eigenvalues are%
\[
\mathbb{\rho }_{1,2}=\frac{Trace\left( M\right) \pm \sqrt{\left( Trace\left(
M\right) \right) ^{2}-4}}{2} 
\]%
if we define the discriminant of the equation (\ref{zzz}) as%
\begin{equation}
\Delta \left( \alpha ,\beta \right) =Trace\left( M\right)  \label{dis}
\end{equation}%
then the eigenvalues and therefore the stability of the solutions depend on
the discriminant $\Delta \left( \alpha ,\beta \right) $.

The discriminant of the Hill equation has been extensively studied, see for
example \cite{LyapunovApp,Yak_Str,Hochstadt} among others. One
of the most remarkable theorems on this subject was done by Haupt \cite%
{MagnWin}, he proved that, for fixed $\beta $, the functions $\Delta \left(
\alpha ,\beta \right) =2$ and $\Delta \left( \alpha ,\beta \right) =-2$ have
an infinite number of zeros, and there exist intervals where $\left\vert
\Delta \left( \alpha ,\beta \right) \right\vert $ is less than $2$, some
where $\left\vert \Delta \left( \alpha ,\beta \right) \right\vert $ is
larger than $2$ and some values of $\alpha ,\beta $ where $\left\vert \Delta
\left( \alpha ,\beta \right) \right\vert =2$.

\begin{theorem} \label{theorem4}
Let $\Delta \left( \alpha ,\beta \right) $ be the discriminant of the
periodic differential equation $\ddot{x}+\left( \alpha +\beta p\left(
t\right) \right) x=0$, where $p\left( t\right) $ is a real valued function
and $p\left( t+\tau \right) =p\left( t\right) $, the coefficients $\alpha $
and $\beta $ are real numbers and $\beta $ is fixed. There exist two
infinite sequences 
\[
\lambda _{0}<\lambda _{1}\leq \lambda _{2}<\lambda _{3}\leq \ldots 
\]%
such that $\Delta \left( \lambda _{i},\beta \right) =2$. And 
\[
\lambda _{1}^{\prime }\leq \lambda _{2}^{\prime }<\lambda _{3}^{\prime }\leq
\lambda _{4}^{\prime }<\ldots 
\]%
such that $\Delta \left( \lambda _{i}^{\prime },\beta \right) =-2$. These sequences interlace in such a way that 
\[
\lambda _{0}<\lambda _{1}^{\prime }\leq \lambda _{2}^{\prime }<\lambda
_{1}\leq \lambda _{2}<\lambda _{3}^{\prime }\leq \lambda _{4}^{\prime
}<\lambda _{3}\leq \ldots 
\]%
Whenever $\alpha $ lies in one of the intervals 
\[
\left( \lambda _{0},\lambda _{1}^{\prime }\right) ,\left( \lambda
_{2}^{\prime },\lambda _{1}\right) ,\left( \lambda _{2},\lambda _{3}^{\prime
}\right) ,\left( \lambda _{4}^{\prime },\lambda _{3}\right) ,\ldots ,\text{
then }\left\vert \Delta \left( \alpha ,\beta \right) \right\vert <2 
\]%
if $\alpha $ lies in 
\[
\left( -\infty ,\lambda _{0}\right) ,\left( \lambda _{1}^{\prime },\lambda
_{2}^{\prime }\right) ,\left( \lambda _{1},\lambda _{2}\right) ,\left(
\lambda _{3}^{\prime },\lambda _{4}^{\prime }\right) ,\ldots ,\text{ then }%
\left\vert \Delta \left( \alpha ,\beta \right) \right\vert >2 
\]%
if $\lambda _{k}=\lambda _{k+1}$ then $\left. \Delta \left( \alpha ,\beta
\right) \right\vert _{\alpha =\lambda _{k}}=2$ and $\left. \frac{\partial
\Delta \left( \alpha ,\beta \right) }{\partial \alpha }\right\vert _{\alpha
=\lambda _{k}}=0$; and if $\lambda _{k}^{\prime }=\lambda _{k+1}^{\prime }$
then $\left. \Delta \left( \alpha ,\beta \right) \right\vert _{\alpha
=\lambda _{k}^{\prime }}=-2$ and $\left. \frac{\partial \Delta \left( \alpha
,\beta \right) }{\partial \alpha }\right\vert _{\alpha =\lambda _{k}^{\prime
}}=0$.
\end{theorem}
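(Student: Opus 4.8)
This is Haupt's theorem, whose complete proof is given in \cite{MagnWin}; here we sketch the strategy we would follow. The plan is to regard $\Delta \left( \alpha ,\beta \right) $, for fixed $\beta $, as an entire analytic function of the spectral parameter $\alpha $ and to combine its global behaviour with Sturmian oscillation theory for the periodic and anti-periodic boundary value problems attached to (\ref{Hi}).

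First I would record the analytic and asymptotic properties of $\Delta $. Since $x_{1},x_{2}$ depend analytically on $\alpha $ by the standard theory of analytic dependence of ODE solutions on parameters (the Lyapunov series (\ref{sss}) furnishes an explicit expansion that converges for every $\alpha $), the function $\Delta \left( \alpha ,\beta \right) =x_{1}\left( \tau \right) +\dot{x}_{2}\left( \tau \right) $ is entire in $\alpha $. A comparison argument then shows $\Delta \left( \alpha ,\beta \right) \to +\infty $ as $\alpha \to -\infty $, while as $\alpha \to +\infty $ the equation becomes increasingly oscillatory and $\Delta \left( \alpha ,\beta \right) $ oscillates in such a way that it crosses each of the levels $\pm 2$ infinitely often. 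This already forces the existence of the two infinite sequences where $\Delta =2$ and where $\Delta =-2$.

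Next I would give these sequences their spectral meaning. The equation $\Delta \left( \alpha ,\beta \right) =2$ holds precisely when $\rho =1$ is a characteristic multiplier, i.e.\ when $\alpha $ is an eigenvalue of the self-adjoint periodic problem $x\left( 0\right) =x\left( \tau \right) $, $\dot{x}\left( 0\right) =\dot{x}\left( \tau \right) $ on $\left[ 0,\tau \right] $; likewise $\Delta \left( \alpha ,\beta \right) =-2$ corresponds to $\rho =-1$ and to the anti-periodic problem $x\left( 0\right) =-x\left( \tau \right) $, $\dot{x}\left( 0\right) =-\dot{x}\left( \tau \right) $. Both problems are regular and self-adjoint, so their eigenvalues are real, bounded below, and accumulate only at $+\infty $; this identifies the $\lambda _{i}$ and $\lambda _{i}^{\prime }$ as real discrete sequences. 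Sturm oscillation theory then counts the zeros of the corresponding eigenfunctions on $\left[ 0,\tau \right) $: the lowest periodic eigenfunction (at $\lambda _{0}$) has no zero, the anti-periodic eigenfunctions at $\lambda _{1}^{\prime },\lambda _{2}^{\prime }$ have one zero, the periodic eigenfunctions at $\lambda _{1},\lambda _{2}$ have two zeros, and so on. Matching these zero counts yields both the monotone ordering within each family and the interlacing pattern $\lambda _{0}<\lambda _{1}^{\prime }\leq \lambda _{2}^{\prime }<\lambda _{1}\leq \lambda _{2}<\cdots $.

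The analytic heart of the argument, and the step I expect to be the main obstacle, is controlling the sign of $\Delta -\left( \pm 2\right) $ between consecutive eigenvalues and relating a coincidence $\lambda _{k}=\lambda _{k+1}$ to the vanishing of $\partial \Delta /\partial \alpha $. For this I would differentiate (\ref{Hi}) with respect to $\alpha $ and use the variation-of-parameters Green's function built from $x_{1},x_{2}$ (whose Wronskian equals $1$ by Liouville's theorem) to obtain a closed formula for $\partial \Delta /\partial \alpha $ as an integral over $\left[ 0,\tau \right] $ of a quadratic expression in $x_{1},x_{2}$. Evaluating that formula at a point where $\Delta =\pm 2$ shows that a \emph{simple} root of $\Delta \mp 2$ is a transversal crossing, $\partial \Delta /\partial \alpha \neq 0$, whereas a \emph{double} root forces $\partial \Delta /\partial \alpha =0$; this is exactly the tangency statement $\lambda _{k}=\lambda _{k+1}\Rightarrow \Delta =2$, $\partial \Delta /\partial \alpha =0$, and similarly for the primed sequence. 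Finally, since $\Delta $ can leave the strip $\left\vert \Delta \right\vert \leq 2$ only through the levels $\pm 2$, the transversality at simple roots combined with the ordering above fixes the sign of $\left\vert \Delta \right\vert -2$ on each interval, giving $\left\vert \Delta \right\vert <2$ on $\left( \lambda _{0},\lambda _{1}^{\prime }\right) ,\left( \lambda _{2}^{\prime },\lambda _{1}\right) ,\ldots $ and $\left\vert \Delta \right\vert >2$ on $\left( -\infty ,\lambda _{0}\right) ,\left( \lambda _{1}^{\prime },\lambda _{2}^{\prime }\right) ,\ldots $, which completes the classification.
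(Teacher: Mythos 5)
Your proposal is correct and takes essentially the same approach as the paper: the paper's entire proof of this theorem is a citation to \cite{Hochstadt} and \cite{MagnWin}, and you likewise defer to \cite{MagnWin} for Haupt's theorem. Your accompanying sketch (entirety of $\Delta$ in $\alpha$, identification of $\Delta = \pm 2$ with the periodic and anti-periodic self-adjoint eigenvalue problems, Sturm oscillation zero-counts for the interlacing, and the integral formula for $\partial \Delta / \partial \alpha$ to handle double roots) is a faithful outline of the classical argument in that reference, so nothing further is needed.
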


\begin{proof}
The proof can be seen in \cite{Hochstadt} or \cite{MagnWin}.
\end{proof}

Theorem \ref{theorem4} implies that, for a fixed $\beta $, the discriminant $\Delta
\left( \alpha ,\beta \right) $ of equation (\ref{zzz}) has an infinite
number of stable and unstable intervals, and those intervals will be bounded
by zeros of $\Delta \left( \alpha ,\beta \right) =2$ and $\Delta \left(
\alpha ,\beta \right) =-2$. If we eliminate the condition on $\beta $, the
boundaries characterized by $\Delta \left( \alpha ,\beta \right) =2$ and $%
\Delta \left( \alpha ,\beta \right) =-2$ will define the so called
transition curves in the $\alpha -\beta $ plane, Figure 4 in section 6 shows
the transition curves for the equation 
\[
\ddot{x}+(\alpha +\beta \left( \cos \left( t\right) +\cos \left( 2t\right)
\right) )x=0 
\]

\subsection{Walsh Series}

In this part the main properties of the Walsh functions are described and
some of them are proved.

The Walsh functions $w_{n}\left( t\right) $ form an ordered set of
rectangular waveforms taking only two amplitude values $\pm 1$ and they form an orthogonal set of $\mathcal{L} _{2}\left[ 0,1\right] $, the Lebesgue
space of square integrable functions on $\left[ 0,1\right] $, see \cite%
{Beauchamp}, i.e. 
\[
\int_{0}^{1}hw_{n}\left( t\right) w_{m}\left( t\right) dt=\left\{ 
\begin{array}{c}
h\text{ if }n=m \\ 
0\text{ if }n\not=m%
\end{array}%
\right. 
\]

Walsh functions are defined over a limited time interval $\left[ 0,1\right] $%
, but it may be transformed in any other interval $\left[ a,b\right] $. Two
arguments are required for completing the definition, a time period $t$ and
an ordering number $n$, this number is related to the number of zero crosses
of each Walsh function\footnote{%
When $\sin \left( t\right) $ and $\cos \left( t\right) $ are used \ in a
Fourier series, the index is related to frequency; for Walsh functions, the
index is called sequency and the analysis based on Walsh functions is called
sequency theory.} \cite{Beauchamp}.

\begin{remark} \label{remark5}
The number of Walsh functions considered is always a power of $2$, i.e. $%
\left\{ w_{0},w_{1},\ldots ,w_{2^{k}-1}\right\} $ for some positive integer $%
k.$
\end{remark}

The Walsh functions can be obtained in several different ways such as:
Rademacher functions \cite{Beauchamp} 
\[
w_{n}\left( t\right) =sign\left[ \left( \sin 2\pi t\right)
^{b_{0}}\prod\nolimits_{k=1}^{m}\left( \cos 2^{k}\pi t\right) ^{b_{k}}\right]
\]%
where $b_{0}...b_{m}$ are the binary bits of the number $n$ expressed in
binary i.e. $n=(b_{m}b_{m-1}...b_{0})$. They can also be obtained by Boolean
synthesis and from Hadamard matrices, being the latter the most known and
used. The first eight Walsh functions are shown in Fig. 1%

   \begin{figure}[h]
      \centering
      \includegraphics[trim=40mm 5mm 40mm 3mm,clip,height=2.5in,width=0.6\textwidth]{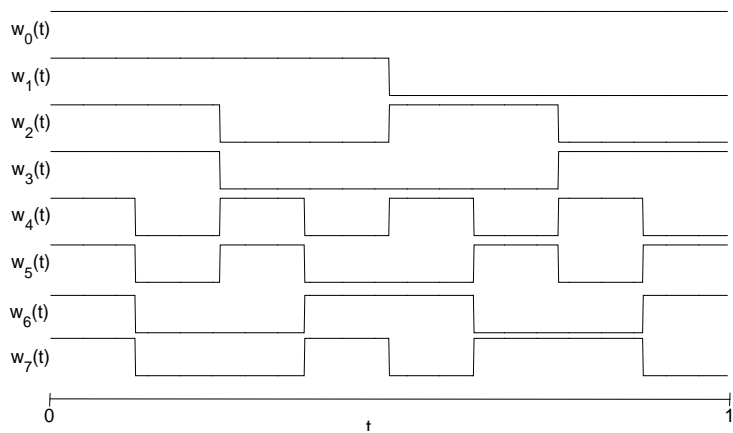}
      \caption{First eight Walsh functions, $\ w_{0}=1$ $\forall t\in \left[ 0,1\right] $. Upper values always are $+1$ and lower values always are $-1$.}
      \label{figurelabel}
   \end{figure}

The representation of Walsh functions by Hadamard matrices is known as Walsh
matrix, we will denote it as $W_{H}$, each row corresponds to a Walsh
function. The Walsh matrix is obtained by rearranging the rows of a Hadamard
matrix\footnote{%
Recall that Hadamard matrices exist only for dimension $2^{k}\times 2^{k}$, $%
k$ being a positive integer. It is consistent with remark \ref{remark5}. The entries of
such matrices are plus and minus ones, and its rows and columns are mutually
orthogonal.}. In this work we are rearranging the rows in a dyadic manner 
\cite{Harmuth}, which is a different ordering of the same set. The Walsh
matrix for the first eight Walsh functions in Fig. 1 is 
\[
W_{H}=\left[ 
\begin{array}{cccccccc}
1 & 1 & 1 & 1 & 1 & 1 & 1 & 1 \\ 
1 & 1 & 1 & 1 & -1 & -1 & -1 & -1 \\ 
1 & 1 & -1 & -1 & 1 & 1 & -1 & -1 \\ 
1 & 1 & -1 & -1 & -1 & -1 & 1 & 1 \\ 
1 & -1 & 1 & -1 & 1 & -1 & 1 & -1 \\ 
1 & -1 & 1 & -1 & -1 & -1 & -1 & 1 \\ 
1 & -1 & -1 & 1 & 1 & -1 & -1 & 1 \\ 
1 & -1 & -1 & 1 & -1 & 1 & 1 & -1%
\end{array}%
\right] 
\]%
the sign of each row entry correspond to the sign of the Walsh function that
is being represented at some time, in other words, the first entry of the
first row is $w_{1}\left( 0\right) $ and the last entry is $w_{1}\left(
1\right) $, $\left( W_{H}\right) _{1,1}=w_{1}\left( 0\right) $ and $\left(
W_{H}\right) _{1,2^{k}}=w_{1}\left( 1\right) $. Notice that $W_{H}\in 
%TCIMACRO{\U{211d} }%
%BeginExpansion
\mathbb{R}
%EndExpansion
^{2^{k}\times 2^{k}}$ is a symmetric and almost orthogonal matrix, $%
W_{H}^{T}=W_{H}$ and $W_{H}^{-1}W_{H}=2^{k}I_{2^{k}}$, here $^{T}$ is the
transpose operator.

It is known that every function $f\left( t\right) $, which is integrable (in the Lebesgue sense), could be represented by a Walsh series \cite{Harmuth}
defined over the interval $t\in [ 0,1] $ as%
\begin{equation}
f\left( t\right) =\sum_{n=0}^{2^{k}-1}a_{n}w_{n}\left( t\right) =\bar{a}^{T}%
\bar{w}_{2^{k}}\left( t\right)  \label{se}
\end{equation}%
where $\bar{a}^{T}=\left[ 
\begin{array}{ccc}
a_{0}, & ..., & a_{2^{k}-1}%
\end{array}%
\right] \in 
%TCIMACRO{\U{211d} }%
%BeginExpansion
\mathbb{R}
%EndExpansion
^{2^{k}}$ are the weights associated to each Walsh function and $\bar{w}%
_{2^{k}}\left( t\right) =\left[ 
\begin{array}{ccc}
w_{0}\left( t\right) , & ..., & w_{2^{k}-1}\left( t\right)%
\end{array}%
\right] ^{T}\in 
%TCIMACRO{\U{211d} }%
%BeginExpansion
\mathbb{R}
%EndExpansion
^{2^{k}}$, $k$ is a positive integer number and it is related to the
accuracy of the representation (\ref{se}). From now on we will use $2^k$ to
denote the order of the approximation. The coefficients $a_{n}$ are given by 
\begin{equation}
a_{n}=\int_{0}^{1}f\left( t\right) w_{n}\left( t\right) dt  \label{co}
\end{equation}%
equation (\ref{se}) can also be written in terms of the Walsh matrix as $%
f\left( t\right) =\bar{a}^{T}W_{H}$.

A very attractive property of Walsh functions is that the integrals of the
function $w_{n}$ which belongs to the finite set $\left\{ w_{0},w_{1},\ldots
,w_{2^{k}-1}\right\} $ can be represented in terms of the set functions \cite%
{Fine}, in order to illustrate this property, lets take the first integral
of $w_{0}\left( t\right) $ and then represent it as in (\ref{se}) for $k=2$
and $t\in [ 0,1] $, so 
\[
\int_{0}^{t}w_{0}\left( \tau_1 \right) d\tau_1 =t 
\]%
if one substitutes $f\left( t\right) =t$ into (\ref{co}) one obtains%
\[
\begin{array}{cc}
a_{0}=\int_{0}^{1}tw_{0}\left( t\right) dt=\frac{1}{2} & a_{1}=%
\int_{0}^{1}tw_{1}\left( t\right) dt=-\frac{1}{4} \\ 
a_{2}=\int_{0}^{1}tw_{2}\left( t\right) dt=-\frac{1}{8} & 
a_{3}=\int_{0}^{1}tw_{3}\left( t\right) dt=0%
\end{array}%
\]%
then the truncated representation (\ref{se}) of the ramp function $f\left(
t\right) =\int_{0}^{t}w_{0}\left( t\right) =t$ is 
\[
\int_{0}^{t}w_{0}\left( t_{1}\right) dt_{1}=\left[ 
\begin{array}{cccc}
\frac{1}{2} & -\frac{1}{4} & -\frac{1}{8} & 0%
\end{array}%
\right] \bar{w}_{4}\left( t\right) 
\]%
similarly, one can take the first integral of $w_{1}\left( t\right) $, $%
w_{2}\left( t\right) $ and $w_{3}\left( t\right) $ to obtain%
\begin{eqnarray*}
\int_{0}^{t}w_{1}\left( t_{1}\right) dt_{1} &=&\left[ 
\begin{array}{cccc}
\frac{1}{4} & 0 & 0 & -\frac{1}{8}%
\end{array}%
\right] \bar{w}_{4}\left( t\right) \\
\int_{0}^{t}w_{2}\left( t_{1}\right) dt_{1} &=&\left[ 
\begin{array}{cccc}
\frac{1}{8} & 0 & 0 & 0%
\end{array}%
\right] \bar{w}_{4}\left( t\right) \\
\int_{0}^{t}w_{3}\left( t_{1}\right) dt_{1} &=&\left[ 
\begin{array}{cccc}
0 & \frac{1}{8} & 0 & 0%
\end{array}%
\right] \bar{w}_{4}\left( t\right)
\end{eqnarray*}%
rewriting the last four equations%
\begin{eqnarray}
\int_{0}^{t}\bar{w}_{4}\left( t_{1}\right) dt_{1} &=&\left[ 
\begin{array}{cccc}
\frac{1}{2} & -\frac{1}{4} & -\frac{1}{8} & 0 \\ 
\frac{1}{4} & 0 & 0 & -\frac{1}{8} \\ 
\frac{1}{8} & 0 & 0 & 0 \\ 
0 & \frac{1}{8} & 0 & 0%
\end{array}%
\right] \bar{w}_{4}\left( t\right)  \label{inW} \\
&=&P^{\left( 4\right) }\bar{w}_{4} (t) \nonumber
\end{eqnarray}%
with similar arguments one can generalize (\ref{inW}) as 
\[
\int_{0}^{t}\bar{w}_{m}\left( t\right) =P^{\left( m\right) }\bar{w}%
_{m}\left( t\right) 
\]%
where $P^{\left( m\right) }\in 
%TCIMACRO{\U{211d} }%
%BeginExpansion
\mathbb{R}
%EndExpansion
^{m\times m}$, $m=2^{k}$, and 
\begin{eqnarray}
P^{\left( n\right) } &=&\left[ 
\begin{array}{cc}
P^{\left( n-1\right) } & -\frac{1}{2^{n+1}}I_{2^{n-1}} \\ 
\frac{1}{2^{n+1}}I_{2^{n-1}} & 0_{2^{n-1}}%
\end{array}%
\right]  \label{P} \\
P^{\left( 0\right) } &=&\frac{1}{2}  \nonumber
\end{eqnarray}

Matrix $P$ is the Walsh function integration operator. It is worth to notice
that $P$ is non-singular since its determinant is $\det \left( P\right) =%
\frac{1}{2^{k}}$.

Another remarkably useful property of Walsh functions is that they are
closed under multiplication, the multiplication of Walsh functions is
defined as 
\[
w_{n}\left( t\right) w_{m}\left( t\right) =w_{n\oplus m}\left( t\right) 
\]%
where $\oplus $ represents no-carry modulo-2 addition\footnote{%
For example, the no-carry modulo-2 addition of 7 and 2 is equal to 5, since $%
(111)_{B}\oplus (010)_{B}=(101)_{B}$} \cite{Beauchamp}. For example 
\begin{eqnarray*}
w_{0}\left( t\right) w_{5}\left( t\right) &=&w_{5}\left( t\right) \\
w_{2}\left( t\right) w_{6}\left( t\right) &=&w_{4}\left( t\right)
\end{eqnarray*}

It is of great importance to notice that if $w_{n}\left( t\right) $ and $%
w_{m}\left( t\right) $ belong to the finite set of Walsh functions 
\[
\left\{ w_{0}\left( t\right) ,w_{1}\left( t\right) ,\ldots
,w_{2^{k}-1}\left( t\right) \right\} 
\]
then, $w_{n}\left( t\right) w_{m}\left( t\right) $ may be represented in
terms of the finite set of Walsh functions to which they belong. Some other
properties will be presented and proved as we need them. In the next section
we obtain a numerical approximation of the discriminant of the Hill equation.

\section{Approximation of the discriminant $\Delta \left( \protect\alpha ,\protect\beta \right) $}

It is well known that the problem of solving the second order differential
equation, 
\begin{equation}
\ddot{z}+g\left( t\right) z=0\text{, \ \ \ }z\left( 0\right) =a\text{, \ }%
\dot{z}\left( 0\right) =b  \label{eqd}
\end{equation}%
where $g\left( t\right) $ is bounded and for $t\geq 0$, it is equivalent to
solving the associated integral equation \cite{collins} 
\begin{equation}
z=-\int_{0}^{t}\int_{0}^{t_{1}}g\left( t_{2}\right) x\left( t_{2}\right)
dt_{2}dt_{1}+bt+a  \label{eqi}
\end{equation}

There is a vast amount of numerical or pseudo-analytical methods by which
one can approximate the solution of (\ref{eqd}) or (\ref{eqi}), see for
example \cite{Trefethen,He}. In this part we will approximate the
solution by means of Walsh functions i.e. we will propose a solution of the
form $z=\bar{a}^{T}\bar{w}_{m}$ as in (\ref{se}).

Consider the problem of solving a Hill equation 
\begin{equation}
\ddot{x}+\left( \alpha +\beta p\left( t\right) \right) x=0\text{ \ \ \ \ }%
x\left( 0\right) =x_{0},\dot{x}\left( 0\right) =\dot{x}_{0}  \label{Hill}
\end{equation}%
where $p\left( \tau +t\right) =p\left( t\right) $ is a real bounded
function, $\alpha $, $\beta \in 
%TCIMACRO{\U{211d} }%
%BeginExpansion
\mathbb{R}
%EndExpansion
$ and for $t\in \left[ 0,\tau \right] $. This problem is equivalent to
solving the integral equation 
\begin{equation}
x=-\int_{0}^{t }\int_{0}^{t_{1}}\left( \alpha +\beta p\left( t_{2}\right)
\right) z\left( t_{2}\right) dt_{2}dt_{1}+\dot{x}_{0}t +x_{0},\qquad 0\leqslant t_1\leqslant t\leqslant\tau
\label{inteq}
\end{equation}

We propose a solution of (\ref{inteq}) in terms of Walsh functions 
\begin{equation}
x=\sum_{n=0}^{2^{k}-1}\nu _{n}w_{n}\left( t\right) =\bar{\nu}^{T}\bar{w}%
_{2^{k}}  \label{app}
\end{equation}%
here the constant vector $\bar{\nu}\in 
%TCIMACRO{\U{211d} }%
%BeginExpansion
\mathbb{R}
%EndExpansion
^{2^{k}}$ is unknown and $\bar{w}_{2^{k}}\left( t\right) $ is the vector of
Walsh functions $\bar{w}_{2^{k}}\left( t\right) =\left[ 
\begin{array}{ccc}
w_{0}\left( t\right) & ... & w_{2^{k}-1}\left( t\right)%
\end{array}%
\right] ^{T}$. Where $2^k$ denotes the
order of the approximation and $k$ is an integer positive number. Integrating (\ref{Hill}) ones we get%
\[
\dot{x}=-\alpha \int_{0}^{t }x\left( t_{1}\right) dt_{1}-\beta
\int_{0}^{t }p\left( t_{1}\right) x\left( t_{1}\right) dt_{1}+\dot{x}_{0} 
\]%
expressing the function $p\left( t\right) $ in terms of Walsh functions, $%
p\left( t\right) =\sum_{n=0}^{2^{k}-1}r_{n}w_{n}\left( t\right) =\bar{r}^{T}%
\bar{w}_{2^{k}}$, where the real constant vector $\bar{r}\in 
%TCIMACRO{\U{211d} }%
%BeginExpansion
\mathbb{R}
%EndExpansion
^{2^{k}}$ is known. Substituting (\ref{app}) in the latter equation 
\begin{equation}
\dot{x}=-\alpha \int_{0}^{t }\bar{\nu}^{T}\bar{w}_{2^{k}}\left(
t_{1}\right) dt_{1}-\beta \int_{0}^{t }\bar{r}^{T}M_{2^{k}}\bar{\nu}%
dt_{1}+\dot{x}_{0}, \qquad    t \in[0,\tau]  \label{firstint}
\end{equation}%
where the matrix $M_{2^{k}}\triangleq \bar{w}_{2^{k}}\bar{w}_{2^{k}}^{T}$
has the form%
\begin{eqnarray*}
M_{2^{k}} &=&\left[ 
\begin{array}{cccc}
w_{0}w_{0} & w_{1}w_{0} & \cdots & w_{2^{k}-1}w_{0} \\ 
w_{0}w_{1} & w_{1}w_{1} & \cdots & w_{2^{k}-1}w_{0} \\ 
\vdots & \vdots & \vdots & \vdots \\ 
w_{0}w_{2^{k}-1} & w_{1}w_{2^{k}-1} & \cdots & w_{2^{k}-1}w_{2^{k}-1}%
\end{array}%
\right] \\
&=&\left[ 
\begin{array}{cccc}
w_{0\oplus 0} & w_{1\oplus 0} & \cdots & w_{2^{k}-1\oplus 0} \\ 
w_{0\oplus 1} & w_{1\oplus 1} & \cdots & w_{2^{k}-1\oplus 1} \\ 
\vdots & \vdots & \vdots & \vdots \\ 
w_{0\oplus 2^{k}-1} & w_{1\oplus 2^{k}-1} & \cdots & w_{2^{k}-1\oplus
2^{k}-1}%
\end{array}%
\right]
\end{eqnarray*}

Notice that the matrix $M_{2^{k}}$ is a symmetric matrix, moreover, the
first column of matrix $M_{2^{k}}$ is the vector of Walsh functions $\bar{w}%
_{2^{k}}\left( t\right) $ and the other $2^{k}-1$ columns are permutations
of the entries of the first column, that is, the matrix $M_{2^{k}}$ can be
written as 
\begin{equation}
M_{2^{k}}\left( t\right) =\left[ \bar{w}_{2^{k}}\left( t\right) ,\Lambda
_{1}^{\left( 2^{k}\right) }\bar{w}_{2^{k}}\left( t\right) ,...,\Lambda
_{2^{k}-1}^{\left( 2^{k}\right) }\bar{w}_{2^{k}}\left( t\right) \right] 
\label{MMM}
\end{equation}%
where each $\Lambda _{i}^{\left( 2^{k}\right) }$ is a symmetric permutation
matrix, see Lemma in \ref{lemmaA1}.

It is desirable to rewrite the second term on the right hand side of (\ref%
{firstint}) so the vector $\bar{\nu}^{T}$ pre-multiply and $\bar{w}\left(
t\right) $ post-multiply some matrix $Q$, i.e. $\bar{r}^{T}M_{2^{k}}\bar{\nu}%
=\bar{\nu}^{T}Q\bar{w}_{2^{k}}$. For this purpose we introduce the next
Lemma, taken from \cite{Karanam}

\begin{lemma} \label{lemma6}
If $M_{2^{k}}\left( t\right) =\bar{w}_{2^{k}}\bar{w}_{2^{k}}^{T}$ and $%
\gamma \in 
%TCIMACRO{\U{211d} }%
%BeginExpansion
\mathbb{R}
%EndExpansion
^{2^{k}}$ then $M_{2^{k}}\gamma =\Lambda _{\gamma }\bar{w}_{2^{k}}$ where $%
\Lambda _{\gamma }=\left[ \gamma ,\Lambda _{1}^{\left( 2^{k}\right) }\gamma
,...,\Lambda _{2^{k}-1}^{\left( 2^{k}\right) }\gamma \right] $.
\end{lemma}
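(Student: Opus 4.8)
The plan is to prove the identity entrywise, exploiting the multiplicative closure $w_{n}w_{m}=w_{n\oplus m}$ to pin down the permutation matrices $\Lambda_{i}^{(2^{k})}$ explicitly. First I would record that, by closure, the $(j,m)$ entry of $M_{2^{k}}=\bar{w}_{2^{k}}\bar{w}_{2^{k}}^{T}$ is $(M_{2^{k}})_{jm}=w_{j\oplus m}$, with indices running over $0,\ldots,2^{k}-1$. Reading off column $i$ from the factorization (\ref{MMM}) then gives $(\Lambda_{i}^{(2^{k})}\bar{w}_{2^{k}})_{j}=(M_{2^{k}})_{ji}=w_{j\oplus i}$; since the Walsh functions are linearly independent, matching coefficients forces $\Lambda_{i}^{(2^{k})}$ to be precisely the XOR-by-$i$ permutation, $(\Lambda_{i}^{(2^{k})})_{jm}=\delta_{m,\,j\oplus i}$. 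In particular $\Lambda_{i}^{(2^{k})}$ acts on an arbitrary vector by $(\Lambda_{i}^{(2^{k})}\gamma)_{j}=\gamma_{j\oplus i}$, and it is symmetric because $m=j\oplus i$ is equivalent to $j=m\oplus i$.

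With this in hand I would compute both sides of $M_{2^{k}}\gamma=\Lambda_{\gamma}\bar{w}_{2^{k}}$ in their $j$-th component. The left-hand side is immediate: $(M_{2^{k}}\gamma)_{j}=\sum_{m}w_{j\oplus m}\gamma_{m}$. For the right-hand side, the $i$-th column of $\Lambda_{\gamma}$ is $\Lambda_{i}^{(2^{k})}\gamma$, so $(\Lambda_{\gamma})_{ji}=(\Lambda_{i}^{(2^{k})}\gamma)_{j}=\gamma_{j\oplus i}$ and hence $(\Lambda_{\gamma}\bar{w}_{2^{k}})_{j}=\sum_{i}\gamma_{j\oplus i}\,w_{i}$.

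The two expressions are matched by the single substitution $m=j\oplus i$. Because $\oplus$ is an involution on $\{0,\ldots,2^{k}-1\}$, the map $i\mapsto j\oplus i$ is a bijection of the index set for each fixed $j$, so $\sum_{i}\gamma_{j\oplus i}w_{i}=\sum_{m}\gamma_{m}w_{j\oplus m}$, which is exactly the left-hand side; as this holds for every $j$, the vector identity follows. Equivalently, one may observe that $M_{2^{k}}\gamma=\bar{w}_{2^{k}}(\bar{w}_{2^{k}}^{T}\gamma)=\big(\sum_{n}\gamma_{n}w_{n}\big)\bar{w}_{2^{k}}$, whose $j$-th entry is $\sum_{n}\gamma_{n}w_{j\oplus n}$, reaching the same point.

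I expect the only genuine obstacle to be the bookkeeping: establishing that each $\Lambda_{i}^{(2^{k})}$ is the XOR-by-$i$ permutation, and then carrying out the index change correctly under the zero-based indexing. Once the permutation structure is identified, the claim reduces to the elementary fact that XOR-by-$j$ merely permutes the summation index, and the remaining manipulations are routine.
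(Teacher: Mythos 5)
Your proof is correct, and it takes a genuinely more granular route than the paper's. The paper works at the matrix level: it multiplies the column representation (\ref{MMM}) by $\gamma$, so the $i$-th entry of $M_{2^{k}}\gamma$ is $\bar{w}_{2^{k}}^{T}\Lambda _{i}^{\left( 2^{k}\right) }\gamma$, transposes each scalar entry to $\gamma ^{T}\Lambda _{i}^{\left( 2^{k}\right) }\bar{w}_{2^{k}}$ using the symmetry of $M_{2^{k}}$ and of the $\Lambda _{i}^{\left( 2^{k}\right) }$ (imported from Lemma \ref{lemmaA1}), and then refactors the stacked column as $\Lambda _{\gamma }\bar{w}_{2^{k}}$ --- a step that, read literally, produces $\Lambda _{\gamma }^{T}\bar{w}_{2^{k}}$ (the rows are $\left( \Lambda _{i}^{\left( 2^{k}\right) }\gamma \right) ^{T}$) and so tacitly also uses that $\Lambda _{\gamma }$ itself is symmetric, which is true because $\left( \Lambda _{\gamma }\right) _{ji}=\gamma _{i\oplus j}$ is symmetric in $\left( i,j\right) $. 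You instead pin down each $\Lambda _{i}^{\left( 2^{k}\right) }$ explicitly as the XOR-by-$i$ permutation (its $\left( j,m\right) $ entry is $1$ exactly when $m=j\oplus i$), which follows correctly from (\ref{MMM}) together with linear independence of the Walsh functions, and then you verify the identity componentwise via the bijective change of index $m=j\oplus i$. What your version buys: the permutation structure is made concrete rather than cited, the symmetry of $\Lambda _{i}^{\left( 2^{k}\right) }$ drops out in one line ($m=j\oplus i\iff j=m\oplus i$), and the small transposition gap above is closed; your closing rank-one observation $M_{2^{k}}\gamma =\bar{w}_{2^{k}}\left( \bar{w}_{2^{k}}^{T}\gamma \right) =\left( \sum_{n}\gamma _{n}w_{n}\right) \bar{w}_{2^{k}}$ is in fact the shortest complete argument of all. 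What the paper's version buys is brevity and independence from the explicit XOR description, which it delegates to the appendix. Both proofs rest on the same two ingredients, the representation (\ref{MMM}) and the closure rule $w_{n}w_{m}=w_{n\oplus m}$, so the difference is one of style and completeness, not of substance.
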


\begin{proof}
Multiplying the representation (\ref{MMM}) of $M_{2^{k}}\left( t\right) $ by
a vector $\gamma \in 
%TCIMACRO{\U{211d} }%
%BeginExpansion
\mathbb{R}
%EndExpansion
^{2^{k}}$ one obtains%
\begin{eqnarray*}
M_{2^{k}}\left( t\right) \gamma &=&\left[ 
\begin{array}{c}
\bar{w}_{2^{k}}^{T}\left( t\right) \gamma \\ 
\bar{w}_{2^{k}}^{T}\left( t\right) \Lambda _{1}^{\left( 2^{k}\right) }\gamma
\\ 
\vdots \\ 
\bar{w}_{2^{k}}^{T}\left( t\right) \Lambda _{2^{k}-1}^{\left( 2^{k}\right)
}\gamma%
\end{array}%
\right] =\left[ 
\begin{array}{c}
\gamma ^{T}\bar{w}_{2^{k}}\left( t\right) \\ 
\gamma ^{T}\Lambda _{1}^{\left( 2^{k}\right) }\bar{w}_{2^{k}}\left( t\right)
\\ 
\vdots \\ 
\gamma ^{T}\Lambda _{2^{k}-1}^{\left( 2^{k}\right) }\bar{w}_{2^{k}}\left(
t\right)%
\end{array}%
\right] \\
&=&\left[ \gamma ,\Lambda _{1}^{\left( 2^{k}\right) }\gamma ,...,\Lambda
_{2^{k}-1}^{\left( 2^{k}\right) }\gamma \right] \bar{w}_{2^{k}}\left(
t\right) \\
&=&\Lambda _{\gamma }\bar{w}_{2^{k}}\left( t\right)
\end{eqnarray*}%
since $M_{2^{k}}\left( t\right) $ and $\Lambda _{n}^{(2^{k})}$ are symmetric
matrices.
\end{proof}

By latter Lemma and $\int_{0}^{t }\bar{w}_{2^{k}}\left( t_{1}\right)
dt_{1}=\tau P\bar{w}_{2^{k}}\left( t\right) $, $t\in[0,\tau]$, $P\triangleq P^{\left(
2^{k}\right) }$, one can rewrite (\ref{firstint}) as 
\begin{eqnarray*}
\dot{x} &=&-\alpha \int_{0}^{t }\bar{\nu}^{T}\bar{w}_{2^{k}}\left(
t_{1}\right) dt_{1}-\beta \int_{0}^{t }\bar{\nu}^{T}\Lambda _{\bar{r}}%
\bar{w}_{2^{k}}\left( t_{1}\right) dt_{1}+\dot{x}_{0} \\
&=&-\tau \left( \alpha \bar{\nu}^{T}P+\beta \bar{\nu}^{T}\Lambda _{\bar{r}%
}P\right) \bar{w}_{2^{k}}\left( t\right) +\dot{x}_{0}
\end{eqnarray*}%
integrating once again, one obtains%
\begin{equation}
x=-\tau ^{2}\left( \alpha \bar{\nu}^{T}+\beta \bar{\nu}^{T}\Lambda _{\bar{r}%
}\right) P^{2}\bar{w}_{2^{k}}\left( t\right) +\tau \dot{x}_{0}e_{1}^{T}P\bar{%
w}_{2^{k}}\left( t\right) +x_{0}  \label{sol}
\end{equation}%
where $e_{1}^{T}=\left[ 
\begin{array}{cccc}
1 & 0 & \cdots & 0%
\end{array}%
\right] $.

From equations (\ref{app}), $x=\bar{\nu}^{T}\bar{w}_{2^{k}}$, and (\ref{sol}%
) one can say 
\[
\bar{\nu}^{T}\bar{w}_{2^{k}}\left( t\right) =\left( -\tau ^{2}\bar{\nu}%
^{T}\left( \alpha I_{2^{k}}+\beta \Lambda _{\bar{r}}\right) P^{2}+\tau \dot{x%
}_{0}e_{1}^{T}P+x_{0}e_{1}^{T}\right) \bar{w}_{2^{k}}\left( t\right) 
\]%
so%
\[
\bar{\nu}^{T}=-\tau ^{2}\bar{\nu}^{T}\left( \alpha I_{2^{k}}+\beta \Lambda _{%
\bar{r}}\right) P^{2}+\tau \dot{x}_{0}e_{1}^{T}P+x_{0}e_{1}^{T} 
\]%
solving for $\bar{\nu}^{T}$%
\[
\bar{\nu}^{T}=\left( \tau \dot{x}_{0}e_{1}^{T}P+x_{0}e_{1}^{T}\right) \Gamma 
\]%
where 
\begin{equation}
\Gamma =\left( I_{2^{k}}+\tau ^{2}\left( \alpha I_{2^{k}}+\beta \Lambda _{%
\bar{r}}\right) P^{2}\right) ^{-1}  \label{Gamma}
\end{equation}%
notice that $\Gamma \in 
%TCIMACRO{\U{211d} }%
%BeginExpansion
\mathbb{R}
%EndExpansion
^{2^{k}\times 2^{k}}$ is a large dimensional matrix and it does not have any
recognizable pattern.

Then the approximation of the general solution $x$ of \ the Hill equation (%
\ref{Hill}) and its derivative $\dot{x}$, are 
\begin{eqnarray*}
x &=&\left( \tau \dot{x}_{0}e_{1}^{T}P+x_{0}e_{1}^{T}\right) \Gamma \bar{w}%
_{2^{k}}\left( t\right) \\
\dot{x} &=&-\frac{1}{\tau }\left( \tau \dot{x}_{0}e_{1}^{T}P+x_{0}e_{1}^{T}%
\right) \left( I_{2^{k}}-\Gamma \right) P^{-1}\bar{w}_{2^{k}}\left( t\right)
+\dot{x}_{0}
\end{eqnarray*}%
therefore two linearly independent solutions of (\ref{inteq}) are%
\[
\begin{tabular}{lll}
$x_{1}=e_{1}^{T}\Gamma \bar{w}_{2^{k}}\left( t\right) $, & $\text{for }%
x_{0}=1$, & $\dot{x}_{0}=0$ \\ 
$x_{2}=\tau e_{1}^{T}P\Gamma \bar{w}_{2^{k}}\left( t\right) \text{,}$ & $%
\text{for }x_{0}=0\text{,}$ & $\dot{x}_{0}=1$%
\end{tabular}%
\]
and the approximation of the state transition matrix $\Phi \left( t,0\right) 
$ is 
\begin{equation}
\Phi \left( t,0\right) =\left[ 
\begin{array}{cc}
e_{1}^{T}\Gamma \bar{w}_{2^{k}}\left( t\right) & \tau e_{1}^{T}P\Gamma \bar{w%
}_{2^{k}}\left( t\right) \\ 
-\frac{1}{\tau }e_{1}^{T}\left( I_{2^{k}}-\Gamma \right) P^{-1}\bar{w}%
_{2^{k}}\left( t\right) & e_{1}^{T}\left( P\Gamma P^{-1}\right) \bar{w}%
_{2^{k}}\left( t\right)%
\end{array}%
\right] \text{, \ \ \ \ \ }t\in \left[ 0,\tau \right]  \label{STM}
\end{equation}

Finally the approximation of the discriminant (\ref{dis}), $\Delta \left(
\alpha ,\beta \right) =x_{1}\left( \tau \right) +\dot{x}_{2}\left( \tau
\right) $, is 
\begin{equation}
\Delta \left( \alpha ,\beta \right) =e^{T}\left[ \Gamma +P\Gamma P^{-1}%
\right] \bar{w}_{2^{k}}\left( \tau \right)  \label{Dapp}
\end{equation}%
where $\tau $ is the minimum period of the Hill equation excitation
function. The vector $\bar{w}_{2^{k}}\left( \tau \right) $ is the last
column of the Walsh matrix $W_{H}$. The matrices $\Gamma $ and $P\Gamma
P^{-1}$ will be denoted as the discriminant sampling matrices, see remark \ref{remark9}.

Notice that for determining $\Delta \left( \alpha ,\beta \right) $ we only
require the first row of the matrix $\Gamma +P\Gamma P^{-1}$, the main
trouble with (\ref{Dapp}) is that we need to obtain the matrix $\Gamma $, in
other words, we need to find the inverse matrix of a $2^{k}\times 2^{k}$
matrix where $2^k$ is the approximation order and it is related to its
accuracy. Nevertheless, (\ref{Dapp}) is easy to treat as we will see in the
next section, the non-singularity condition of $\Gamma $ will be treated in
section 5.

\section{Simplification of the $\Delta \left( \protect\alpha ,\protect\beta\right) $ approximation}

We had seen that in order to compute $\Delta \left( \alpha ,\beta \right) $, it is necessary to obtain $\Gamma $ which is the inverse of a large
dimensional matrix. In this section we will obtain a simplified version of $%
\Delta \left( \alpha ,\beta \right) $, i.e. we will see that the matrix $%
\Gamma +P\Gamma P^{-1}$ can be rewritten as an upper triangular matrix and
the dependence on Walsh functions will be eliminated.

Let $W_{H}\in 
%TCIMACRO{\U{211d} }%
%BeginExpansion
\mathbb{R}
%EndExpansion
^{2^{k}\times 2^{k}}$ be the representation of the Walsh functions in terms
of Hadamard matrix, see section 2, using the fact that the Walsh functions
are orthogonal to each other, one can prove that $W_{H}^{-1}=\frac{1}{2^{k}}%
W_{H}^{T}=\frac{1}{2^{k}}W_{H}$.

We claim that the matrix $\Lambda _{\bar{r}}$ is similar to a diagonal
matrix $\bar{\Lambda}_{\bar{r}}\triangleq W_{H}^{-1}\Lambda _{\bar{r}%
}W_{H}=diag\left( p_{1},p_{2},\ldots ,p_{2^{k}}\right) $, where the
constants $p_{n}$ are defined as $p_{n}=p\left( n\frac{\tau }{2^{k}}\right) $%
, $n=1,2,\ldots 2^{k}$, and $p\left( t\right) $ is the Hill equation
excitation function. In addition, we can say that $p_{n}$ are the
eigenvalues of $\Lambda _{\bar{r}}$ and the columns of $W_{H}$ are the
eigenvectors associated to each $p_{n}$, namely, $\Lambda _{\bar{r}%
}W_{H}=W_{H}diag\left( p_{1},p_{2},\ldots ,p_{2^{k}}\right) $. This claim
follows from Lemma \ref{lemma6}, for a detailed proof see \cite{Gulamhusein}. Notice
that the entries of the diagonal matrix $\bar{\Lambda}_{\bar{r}}$ are the
function $p\left( \cdot \right) $ evaluated at the end of the $nth$
interval. The matrix $\bar{\Lambda}_{\bar{r}}$ may be seen as a $p\left(
t\right) $ sampling matrix.

One can prove that the integration operator $P$ is similar to the upper
triangular matrix $\bar{P}\triangleq W_{H}PW_{H}=\frac{1}{2}%
I_{2^{k}}+Q+Q^{2}+\ldots +Q^{2^{k}-1}$ where $Q$ is the nilpotent matrix 
\[
Q=\left[ 
\begin{array}{cccc}
0 & 1 & \cdots  & 0 \\ 
\vdots  & \vdots  & \ddots  & \vdots  \\ 
0 & 0 & \cdots  & 1 \\ 
0 & 0 & \cdots  & 0%
\end{array}%
\right] \in 
%TCIMACRO{\U{211d} }%
%BeginExpansion
\mathbb{R}
%EndExpansion
^{2^{k}\times 2^{k}}
\]%
and $\frac{\bar{P}^{-1}}{2^{2k+2}}\triangleq $ $W_{H}P^{-1}W_{H}=\frac{1}{2}%
I_{2^{k}}-Q+Q^{2}+\ldots -Q^{2^{k}-1}$, the proof may be found in \cite%
{ChenTsay}. For sake of completeness we write the matrices $\bar{\Lambda}_{%
\bar{r}}$, $\bar{P}$ and $\bar{P}^{-1}$%
\begin{eqnarray*}
\bar{\Lambda}_{\bar{r}} &=&\left[ 
\begin{array}{ccccc}
p_{1} & 0 & \cdots  & 0 & 0 \\ 
0 & p_{2} & \cdots  & 0 & 0 \\ 
\vdots  & \vdots  & \ddots  & \vdots  & \vdots  \\ 
0 & 0 & \cdots  & p_{2^{k}-1} & 0 \\ 
0 & 0 & \cdots  & 0 & p_{2^{k}}%
\end{array}%
\right] \text{, }\bar{P}=\left[ 
\begin{array}{ccccc}
\frac{1}{2} & 1 & \cdots  & 1 & 1 \\ 
0 & \frac{1}{2} & \cdots  & 1 & 1 \\ 
\vdots  & \vdots  & \ddots  & \vdots  & \vdots  \\ 
0 & 0 & \cdots  & \frac{1}{2} & 1 \\ 
0 & 0 & \cdots  & 0 & \frac{1}{2}%
\end{array}%
\right]  \\
\text{and }\bar{P}^{-1} &=&2^{2k+2}\left[ 
\begin{array}{cccccc}
\frac{1}{2} & -1 & 1 & \cdots  & 1 & -1 \\ 
0 & \frac{1}{2} & -1 & \ddots  & -1 & 1 \\ 
0 & 0 & \frac{1}{2} & \ddots  & 1 & -1 \\ 
\vdots  & \vdots  & \vdots  & \ddots  & \vdots  & \vdots  \\ 
0 & 0 & 0 & \cdots  & \frac{1}{2} & -1 \\ 
0 & 0 & 0 & \cdots  & 0 & \frac{1}{2}%
\end{array}%
\right] 
\end{eqnarray*}

\begin{claim} \label{claim7}
The matrix $\Gamma =\left( I_{2^{k}}+\tau ^{2}\left( \alpha I_{2^{k}}+\beta
\Lambda _{R}\right) P^{2}\right) ^{-1}$ is almost orthogonal similar%
\footnote{%
We say that a square matrix $A$ is almost orthogonal similar to $B$ if $%
\exists $ an almost orthogonal matrix $R$ such that $B=RAR^{-1}$ and $%
R^{T}R=\alpha I$, $\alpha \not=0$.} to the upper triangular matrix%
\begin{equation}
\bar{\Gamma}=\left( I_{2^{k}}+\frac{\tau ^{2}}{2^{2k}}\left( \alpha
I_{2^{k}}+\beta \bar{\Lambda}_{\bar{r}}\right) \bar{P}^{2}\right) ^{-1}
\label{GH}
\end{equation}
\end{claim}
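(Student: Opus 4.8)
The plan is to produce the conjugating matrix explicitly and then push the similarity through the definition of $\Gamma$. The natural candidate is $R=W_{H}^{-1}$, which is exactly the transformation recorded in the notation as $\bar{\Gamma}=W_{H}^{-1}\Gamma W_{H}$. First I would verify that $R$ is admissible in the sense of the footnote: since $W_{H}$ is symmetric with $W_{H}^{2}=2^{k}I_{2^{k}}$, we have $W_{H}^{-1}=\frac{1}{2^{k}}W_{H}$, and hence $\left(W_{H}^{-1}\right)^{T}W_{H}^{-1}=\frac{1}{2^{2k}}W_{H}^{2}=\frac{1}{2^{k}}I_{2^{k}}$. Thus $R^{T}R=\alpha I_{2^{k}}$ with $\alpha=2^{-k}\neq 0$, so $R$ is almost orthogonal and it suffices to check that $W_{H}^{-1}\Gamma W_{H}$ equals the right-hand side of (\ref{GH}).

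The core of the argument is that conjugation by $W_{H}$ is a ring homomorphism on $\mathbb{R}^{2^{k}\times 2^{k}}$, so it respects sums, products and inverses. Writing $\Gamma=\mathcal{A}^{-1}$ with $\mathcal{A}\triangleq I_{2^{k}}+\tau^{2}\left(\alpha I_{2^{k}}+\beta\Lambda_{\bar{r}}\right)P^{2}$, I would use $W_{H}^{-1}\mathcal{A}^{-1}W_{H}=\left(W_{H}^{-1}\mathcal{A}W_{H}\right)^{-1}$ and distribute the conjugation term by term: the identity is fixed, $W_{H}^{-1}\Lambda_{\bar{r}}W_{H}=\bar{\Lambda}_{\bar{r}}$ by the diagonalization claim already established, and $W_{H}^{-1}P^{2}W_{H}=\left(W_{H}^{-1}PW_{H}\right)^{2}$. (Here $\Lambda_{R}$ in the statement is the same object as $\Lambda_{\bar{r}}$ in the surrounding text.)

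The one delicate point — and the only real obstacle — is the bookkeeping of the scaling constant forced by the paper's convention $\bar{P}\triangleq W_{H}PW_{H}$, which is \emph{not} the genuine similarity transform of $P$. Substituting $W_{H}=2^{k}W_{H}^{-1}$ gives $\bar{P}=2^{k}\,W_{H}^{-1}PW_{H}$, i.e. $W_{H}^{-1}PW_{H}=\frac{1}{2^{k}}\bar{P}$, and therefore $W_{H}^{-1}P^{2}W_{H}=\frac{1}{2^{2k}}\bar{P}^{2}$. Assembling the pieces yields
\[
W_{H}^{-1}\mathcal{A}W_{H}=I_{2^{k}}+\tau^{2}\left(\alpha I_{2^{k}}+\beta\bar{\Lambda}_{\bar{r}}\right)\tfrac{1}{2^{2k}}\bar{P}^{2}=I_{2^{k}}+\tfrac{\tau^{2}}{2^{2k}}\left(\alpha I_{2^{k}}+\beta\bar{\Lambda}_{\bar{r}}\right)\bar{P}^{2},
\]
and taking inverses reproduces exactly the expression (\ref{GH}). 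This gives $\bar{\Gamma}=R\Gamma R^{-1}$ with $R$ almost orthogonal, which is the assertion of the Claim. I would finally note that the homomorphism property also guarantees $\mathcal{A}$ is invertible if and only if $W_{H}^{-1}\mathcal{A}W_{H}$ is, so the two inverses exist simultaneously; the actual non-singularity of $\Gamma$ is the matter deferred to section~5 and is assumed here.
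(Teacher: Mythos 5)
Your proposal is correct and takes essentially the same route as the paper: conjugation by $W_{H}^{-1}$, distributing it through the sum, the product $P^{2}$, and the inverse, using $W_{H}^{-1}\Lambda_{\bar{r}}W_{H}=\bar{\Lambda}_{\bar{r}}$, and absorbing the factor $\frac{1}{2^{2k}}$ forced by the convention $\bar{P}=W_{H}PW_{H}$ (you even add two points the paper leaves implicit: the explicit check $R^{T}R=2^{-k}I_{2^{k}}$ of the footnote's condition, and the simultaneous invertibility of $\mathcal{A}$ and its conjugate). The only item in the paper's proof you omit is its one-line closing observation that $\bar{\Gamma}$ is indeed upper triangular --- immediate since $\bar{\Lambda}_{\bar{r}}$ is diagonal and $\bar{P}$, hence $\bar{P}^{2}$ and the inverse of the resulting nonsingular upper triangular matrix, are upper triangular --- which is worth stating since the triangular form is part of the Claim and is what the subsequent sections exploit.
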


\begin{proof}
The proof is based on elementary algebraic properties of matrices, one just
has to pre-multiply and post-multiply $\Gamma $ by $W_{H}^{-1}$ and $W_{H}$
respectively and to use the definitions of $\bar{\Lambda}_{\bar{r}}$, $\bar{P%
}$ and $\bar{P}^{-1}$%
\begin{eqnarray*}
W_{H}^{-1}\Gamma W_{H} &=&W_{H}^{-1}\left( I_{2^{k}}+\tau ^{2}\left( \alpha
I_{2^{k}}+\beta \Lambda _{\bar{r}}\right) P^{2}\right) ^{-1}W_{H} \\
&=&\left( W_{H}^{-1}\left( I_{2^{k}}+\tau ^{2}\left( \alpha I_{2^{k}}+\beta
\Lambda _{\bar{r}}\right) P^{2}\right) W_{H}\right) ^{-1} \\
&=&\left( I_{2^{k}}+\tau ^{2}\alpha W_{H}^{-1}P^{2}W_{H}+\tau ^{2}\beta
W_{H}^{-1}\Lambda _{\bar{r}}P^{2}W_{H}\right) ^{-1} \\
&=&\left( I_{2^{k}}+\tau ^{2}\alpha W_{H}^{-1}PW_{H}W_{H}^{-1}PW_{H}+\tau
^{2}\beta W_{H}^{-1}\Lambda _{\bar{r}}W_{H}W_{H}^{-1}PW_{H}W_{H}^{-1}PW_{H}%
\right) ^{-1} \\
&=&\left( I_{2^{k}}+\frac{\tau ^{2}\alpha }{2^{2k}}W_{H}PW_{H}W_{H}PW_{H}+%
\frac{\tau ^{2}\beta }{2^{2k}}W_{H}^{-1}\Lambda _{\bar{r}%
}W_{H}W_{H}PW_{H}W_{H}PW_{H}\right) ^{-1} \\
&=&\left( I_{2^{k}}+\frac{\tau ^{2}\alpha }{2^{2k}}\bar{P}^{2}+\frac{\tau
^{2}\beta }{2^{2k}}\bar{\Lambda}_{\bar{r}}\bar{P}^{2}\right) ^{-1} \\
&=&\left( I_{2^{k}}+\frac{\tau ^{2}}{2^{2k}}\left( \alpha I+\beta \bar{%
\Lambda}_{\bar{r}}\right) \bar{P}^{2}\right) ^{-1} \\
&=&\bar{\Gamma}
\end{eqnarray*}%
since $\bar{P}$ is an upper triangular matrix; $\bar{P}^{2}$ and $\left(
I_{2^{k}}+\frac{\tau ^{2}}{2^{2k}}\left( \alpha I+\beta \bar{\Lambda}_{\bar{r%
}}\right) \bar{P}^{2}\right) ^{-1}$ are also triangular matrices.
\end{proof}

Similarly, one can prove that the matrix $P\Gamma P^{-1}=\left(
I_{2^{k}}+\tau ^{2}\alpha P^{2}+\tau ^{2}\beta P\Lambda _{\bar{r}}P\right)
^{-1}$ is almost orthogonal similar to an upper triangular matrix $\bar{%
\Gamma}_{P}=W_{H}^{-1}P\Gamma P^{-1}W_{H}$ namely 
\begin{equation}
\bar{\Gamma}_{P}=\left( I_{2^{k}}+\frac{\tau ^{2}\alpha }{2^{2k}}\bar{P}^{2}+%
\frac{\tau ^{2}\beta }{2^{2k}}\bar{P}\bar{\Lambda}_{\bar{r}}\bar{P}\right)
^{-1}  \label{GHP}
\end{equation}

Now we can rewrite the approximation of the discriminant $\Delta \left(
\alpha ,\beta \right) $ as

\begin{lemma} \label{lemma8}
If $\bar{\Gamma}$ and $\bar{\Gamma}_{P}$ are defined as in (\ref{GH}) and (%
\ref{GHP}) respectively, then the approximation of the discriminant $\Delta
\left( \alpha ,\beta \right) $ can be written as%
\begin{equation}
\Delta \left( \alpha ,\beta \right) =e^{T}\left( \bar{\Gamma}+\bar{\Gamma}%
_{P}\right) e_{2^{k}}  \label{Dapp2}
\end{equation}%
where $e=\left[ 
\begin{array}{cccc}
1 & 1 & \ldots & 1%
\end{array}%
\right] ^{T}$ and $e_{2^{k}}=\left[ 
\begin{array}{cccc}
0 & 0 & \ldots & 1%
\end{array}%
\right] ^{T}$.
\end{lemma}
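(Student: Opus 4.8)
The plan is to pass from the approximation (\ref{Dapp}) to (\ref{Dapp2}) by a single conjugation, using that Claim \ref{claim7} already supplies the similar matrices $\bar{\Gamma}=W_H^{-1}\Gamma W_H$ and $\bar{\Gamma}_P=W_H^{-1}P\Gamma P^{-1}W_H$ from (\ref{GH}) and (\ref{GHP}). Since, as noted right after (\ref{Dapp}), only the first row of $\Gamma+P\Gamma P^{-1}$ enters the discriminant, I would start from the first-row contraction
\[
\Delta\left(\alpha,\beta\right)=e_1^T\left(\Gamma+P\Gamma P^{-1}\right)\bar{w}_{2^k}\left(\tau\right),
\]
and insert the identity $I_{2^k}=W_H W_H^{-1}$ on either side of the bracket, regrouping the product as
\[
\Delta\left(\alpha,\beta\right)=\left(e_1^T W_H\right)\left(W_H^{-1}\left(\Gamma+P\Gamma P^{-1}\right)W_H\right)\left(W_H^{-1}\bar{w}_{2^k}\left(\tau\right)\right).
\]

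First I would treat the central factor: distributing the conjugation over the sum and invoking Claim \ref{claim7} together with the definition (\ref{GHP}) gives $W_H^{-1}\left(\Gamma+P\Gamma P^{-1}\right)W_H=\bar{\Gamma}+\bar{\Gamma}_P$, both summands being upper triangular. Next I would reduce the two flanking vectors. For the left one, the first row of the Walsh matrix represents $w_0\equiv 1$ sampled on $\left[0,\tau\right]$, so all its entries equal $1$ and hence $e_1^T W_H=e^T$. For the right one, the sentence below (\ref{Dapp}) identifies $\bar{w}_{2^k}\left(\tau\right)$ as the last column of $W_H$, i.e. $\bar{w}_{2^k}\left(\tau\right)=W_H e_{2^k}$, so that $W_H^{-1}\bar{w}_{2^k}\left(\tau\right)=e_{2^k}$. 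Substituting the three simplifications produces $\Delta\left(\alpha,\beta\right)=e^T\left(\bar{\Gamma}+\bar{\Gamma}_P\right)e_{2^k}$, which is exactly (\ref{Dapp2}).

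I expect no real obstacle in the matrix algebra once Claim \ref{claim7} is granted; the only point demanding care is the two flanking reductions together with the convention $W_H^{-1}=\tfrac{1}{2^k}W_H$, which must be used consistently so that the inserted factors $W_H W_H^{-1}$ genuinely cancel. Conceptually the identity simply re-expresses the same scalar after conjugating by $W_H$: the similarity converts the first-row selector $e_1^T$ into the all-ones row $e^T$ and the endpoint-evaluation vector $\bar{w}_{2^k}\left(\tau\right)$ into the canonical vector $e_{2^k}$. The gain is structural: because $\bar{\Gamma}$ and $\bar{\Gamma}_P$ are upper triangular, the right-hand side of (\ref{Dapp2}) is just a sum over their last columns, which is the form exploited to extract the recursive summation in the next section.
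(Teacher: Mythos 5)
Your proposal is correct and takes essentially the same route as the paper's own proof: both insert $W_{H}W_{H}^{-1}$ around $\Gamma +P\Gamma P^{-1}$ in (\ref{Dapp}), invoke Claim \ref{claim7} together with (\ref{GHP}) to identify the conjugated sum with $\bar{\Gamma}+\bar{\Gamma}_{P}$, and reduce the flanking factors via $e_{1}^{T}W_{H}=e^{T}$ and $W_{H}^{-1}\bar{w}_{2^{k}}\left( \tau \right) =e_{2^{k}}$. Your explicit starting point $e_{1}^{T}\left( \Gamma +P\Gamma P^{-1}\right) \bar{w}_{2^{k}}\left( \tau \right) $ even quietly repairs the paper's notational slip of writing $e^{T}$ already in (\ref{Dapp}), which is what makes the reduction $e_{1}^{T}W_{H}=e^{T}$ meaningful.
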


\begin{proof}
It follows from the claim (\ref{claim7})%
\begin{eqnarray*}
\Delta \left( \alpha ,\beta \right) &=&e^{T}W_{H}W_{H}^{-1}\left( \Gamma
+P\Gamma P^{-1}\right) W_{H}W_{H}^{-1}\bar{w}_{2^{k}}\left( \tau \right) \\
&=&e^{T}\left( W_{H}^{-1}\left( \Gamma +P\Gamma P^{-1}\right) W_{H}\right)
e_{2^{k}} \\
&=&e^{T}\left( W_{H}^{-1}\Gamma W_{H}+W_{H}^{-1}P\Gamma P^{-1}W_{H}\right)
e_{2^{k}} \\
&=&e^{T}\left( \bar{\Gamma}+\bar{\Gamma}_{p}\right) e_{2^{k}}
\end{eqnarray*}%
where $e=e_{1}^{T}W_{H}=\left[ 
\begin{array}{cccc}
1 & 1 & \ldots & 1%
\end{array}%
\right] ^{T}$ and $e_{2^{k}}=W_{H}^{-1}\bar{w}_{2^{k}}\left( \tau \right) =%
\left[ 
\begin{array}{cccc}
0 & 0 & \ldots & 1%
\end{array}%
\right] ^{T}$.
\end{proof}

Notice that the discriminant approximation is the summation of the last
column entries of the matrices $\bar{\Gamma}$ and $\bar{\Gamma}_{P}$ and it
does not depend on the Walsh functions any longer.

At first sight the approximations (\ref{Dapp}) and (\ref{Dapp2}) are almost
the same, both of them have the same problem, they depend on the inverse of
large dimensional matrices. But, the fundamental difference between both
approximations is the structure of the matrices $\Gamma $
and $\bar{\Gamma} $, the former is a matrix full of
numbers and seems to have no pattern at all, on the other hand, the latter
is an upper triangular matrix and has a distinguishable pattern, see (\ref%
{MG}). Furthermore, thanks to the triangular form of $\bar{\Gamma}$ and $%
\bar{\Gamma}_{P}$ one can obtain their last column entries, which is done in
the following section.

\begin{remark} \label{remark9}
As we had seen the discriminant is defined as $\Delta \left( \alpha ,\beta
\right) =x_{1}\left( \tau \right) +\dot{x}_{2}\left( \tau \right) $, if we
set $t_{n}=n\frac{\tau }{2^{k}}$ instead of$\ \tau $, and change the
notation of the discriminant so, the dependence on $t_{n}$ be explicit then, 
$\Delta \left( \alpha ,\beta \right) $ may be written as $\Delta \left(
\alpha ,\beta ,t_{n}\right) =x_{1}\left( t_{n}\right) +\dot{x}_{2}\left(
t_{n}\right) $. From the approximation of the state transition matrix (\ref%
{STM}) it is clear that $\Delta \left( \alpha ,\beta ,t_{n}\right)
=e^{T}\left( \bar{\Gamma}+\bar{\Gamma}_{p}\right) e_{n}$, where $e_{n}$ is
equal to a $2^{k}\times 1$ vector of zeros but equal to one on the $n-th$
entry, this follows from the fact $W_{H}^{-1}\bar{w}_{2^{k}}\left(
t_{n}\right) =e_{n}$. So $\Delta \left( \alpha ,\beta ,t_{n}\right) $ gives
us the value of the addition $x_{1}\left( t\right) +\dot{x}_{2}\left(
t\right) $ at $t=t_{n}$, that is, $e^{T}\left( \bar{\Gamma}+\bar{\Gamma}%
_{p}\right) e_{n}$ gives us the sampling of the addition $x_{1}\left(
t\right) +\dot{x}_{2}\left( t\right) $. Thus, the matrices $\bar{\Gamma}$
and $\bar{\Gamma}_{P}$ may be seen as the discriminant sampling matrices.
\end{remark}

\section{Explicit form of the discriminant $\Delta \left( \protect\alpha ,\protect\beta \right) $}

In this part we give an explicit form of the discriminant $\Delta \left(
\alpha ,\beta \right) $ by removing the dependence on the inverse matrices $%
\bar{\Gamma}$ and $\bar{\Gamma}_{P}$, this is done thanks to their
triangular form. First of all we write the matrices $\bar{\Gamma}$ and $\bar{%
\Gamma}_{P}$ in order to see the pattern they follow%
\begin{small}
\begin{equation}
\bar{\Gamma}=\left[ 
\begin{array}{cccccc}
1+\frac{\tau ^{2}\left( \alpha +\beta p_{1}\right) }{2^{2k+2}} & \frac{\tau
^{2}\left( \alpha +\beta p_{1}\right) }{2^{2k}} & \cdots & \left(
2^{k}-3\right) \frac{\tau ^{2}\left( \alpha +\beta p_{1}\right) }{2^{2k}} & 
\left( 2^{k}-2\right) \frac{\tau ^{2}\left( \alpha +\beta p_{1}\right) }{%
2^{2k}} & \left( 2^{k}-1\right) \frac{\tau ^{2}\left( \alpha +\beta
p_{1}\right) }{2^{2k}} \\ 
0 & 1+\frac{\tau ^{2}\left( \alpha +\beta p_{2}\right) }{2^{2k+2}} & \cdots
& \left( 2^{k}-4\right) \frac{\tau ^{2}\left( \alpha +\beta p_{2}\right) }{%
2^{2k}} & \left( 2^{k}-3\right) \frac{\tau ^{2}\left( \alpha +\beta
p_{2}\right) }{2^{2k}} & \left( 2^{k}-2\right) \frac{\tau ^{2}\left( \alpha
+\beta p_{2}\right) }{2^{2k}} \\ 
\vdots & \vdots & \ddots & \vdots & \vdots & \vdots \\ 
0 & 0 & \cdots & 1+\frac{\tau ^{2}\left( \alpha +\beta p_{2^{k}-2}\right) }{%
2^{2k+2}} & \frac{\tau ^{2}\left( \alpha +\beta p_{2^{k}-2}\right) }{2^{2k}}
& 2\frac{\tau ^{2}\left( \alpha +\beta p_{2^{k}-2}\right) }{2^{2k}} \\ 
0 & 0 & \cdots & 0 & 1+\frac{\tau ^{2}\left( \alpha +\beta
p_{2^{k}-1}\right) }{2^{2k+2}} & \frac{\tau ^{2}\left( \alpha +\beta
p_{2^{k}-1}\right) }{2^{2k}} \\ 
0 & 0 & \cdots & 0 & 0 & 1+\frac{\tau ^{2}\left( \alpha +\beta
p_{2^{k}}\right) }{2^{2k+2}}%
\end{array}%
\right] ^{-1}  \label{MG}
\end{equation}
\end{small}
\begin{scriptsize}
\[
\bar{\Gamma}^{-1}_{P}=\left[ 
\begin{array}{ccccc}
1+\frac{\tau ^{2}\left( \alpha +\beta p_{1}\right) }{2^{2k+2}} & \frac{\tau
^{2}\left( \alpha +\frac{\beta }{2}\left( p_{1}+p_{2}\right) \right) }{%
2^{2k+2}} & \cdots & \frac{\tau ^{2}\left( \left( 2^{k}-2\right) \alpha
+\beta \left( \frac{1}{2}p_{1}+p_{2}+\ldots +\frac{1}{2}p_{2^{k}-1}\right)
\right) }{2^{2k+2}} & \frac{\tau ^{2}\left( \left( 2^{k}-1\right) \alpha
+\beta \left( \frac{1}{2}p_{1}+p_{2}+\ldots +\frac{1}{2}p_{2^{k}}\right)
\right) }{2^{2k+2}} \\ 
0 & 1+\frac{\tau ^{2}\left( \alpha +\beta p_{2}\right) }{2^{2k+2}} & \cdots
& \frac{\tau ^{2}\left( \left( 2^{k}-3\right) \alpha +\beta \left( \frac{1}{2%
}p_{2}+p_{3}+\ldots +\frac{1}{2}p_{2^{k}-1}\right) \right) }{2^{2k+2}} & 
\frac{\tau ^{2}\left( \left( 2^{k}-2\right) \alpha +\beta \left( \frac{1}{2}%
p_{2}+p_{3}+\ldots +\frac{1}{2}p_{2^{k}}\right) \right) }{2^{2k+2}} \\ 
\vdots & \vdots & \ddots & \vdots & \vdots \\ 
0 & 0 & \cdots & \frac{\tau ^{2}\left( \alpha +\frac{\beta }{2}\left(
p_{2^{k}-2}+p_{2^{k}-1}\right) \right) }{2^{2k+2}} & \frac{\tau ^{2}\left(
2\alpha +\beta \left( \frac{1}{2}p_{2^{k}-2}+p_{2^{k}-1}+\frac{1}{2}%
p_{2^{k}}\right) \right) }{2^{2k+2}} \\ 
0 & 0 & \cdots & 1+\frac{\tau ^{2}\left( \alpha +\beta p_{2^{k}-1}\right) }{%
2^{2k+2}} & \frac{\tau ^{2}\left( \alpha +\frac{\beta }{2}\left(
p_{2^{k}-1}+p_{2^{k}}\right) \right) }{2^{2k+2}} \\ 
0 & 0 & \cdots & 0 & 1+\frac{\tau ^{2}\left( \alpha +\beta p_{2^{k}}\right) 
}{2^{2k+2}}%
\end{array}%
\right]  
\]
\end{scriptsize}
Now that we have explicitly written the matrices $\bar{\Gamma}$ and $\bar{%
\Gamma}_{p}$, we can deal with the non-singularity condition.

\begin{remark} \label{remark10}
By simple inspection of (\ref{MG}) we can notice that the only chance for
matrices $\bar{\Gamma}$ and $\bar{\Gamma}_{p}$ to be singular is that the
function $q\left( t\right) \triangleq \alpha +\beta p\left( t\right) $ be
large enough, in modulo, that some entry of the main diagonal be equal to
zero. In other words for $\bar{\Gamma}$ and $\bar{\Gamma}_{p}$ to be
singular the equality 
\begin{equation}
\alpha +\beta p_{n}=-\frac{2^{2k+2}}{\tau ^{2}}  \label{Sing_cond}
\end{equation}%
must be fulfilled for some $t_{n}=\frac{n\tau }{2^{k}}$, $n=1,2,\ldots ,2^{k}
$. Remembering that $\bar{\Gamma}$ and $\bar{\Gamma}_{P}$ are $2^{k}\times
2^{k}$ real matrices and $2^k$ is the approximation order, (\ref{Sing_cond})
implies that for $\alpha $, $\beta $ and $p\left( t_{k}\right) $ small
enough and large enough matrices the non-singularity is guaranteed.
\end{remark}

As we said, in the previous section, the discriminant approximation (\ref%
{Dapp2}) computation just requires the last column of the matrices $\bar{%
\Gamma}$ and $\bar{\Gamma}_{P}$, from now on we will call $b_{n}$ and $c_{n}$
($n=0,1,\ldots 2^{k}-1$) to the $2^{k}-n$ entry of the last column of $\bar{%
\Gamma}$ and $\bar{\Gamma}_{P}$ respectively, i.e. 
\[
\bar{\Gamma}=\left[ 
\begin{array}{ccccc}
\ast & \ast & \cdots & \ast & b_{2^{k}-1} \\ 
0 & \ast & \cdots & \ast & b_{2^{k}-2} \\ 
\vdots & \vdots & \ddots & \vdots & \vdots \\ 
0 & 0 & \cdots & \ast & b_{1} \\ 
0 & 0 & \cdots & 0 & b_{0}%
\end{array}%
\right] \text{ and \ \ \ }\bar{\Gamma}_{P}=\left[ 
\begin{array}{ccccc}
\ast & \ast & \cdots & \ast & c_{2^{k}-1} \\ 
0 & \ast & \cdots & \ast & c_{2^{k}-2} \\ 
\vdots & \vdots & \ddots & \vdots & \vdots \\ 
0 & 0 & \cdots & \ast & c_{1} \\ 
0 & 0 & \cdots & 0 & c_{0}%
\end{array}%
\right] 
\]%
with this definitions the approximation of the discriminant can be written
as 
\begin{equation}
\Delta \left( \alpha ,\beta \right) =\sum_{n=0}^{2^{k}-1}\left(
b_{n}+c_{n}\right)  \label{summ}
\end{equation}%
the Lemma in \ref{lemmaA2} gives us a recursive method for obtaining the
entries of the last column of a non-singular upper triangular matrix.

Now we are ready to state and prove the first of the two main results of
this work, Theorem \ref{theorem11} gives us, in an explicit manner, the approximation of
the discriminant $\Delta \left( \alpha ,\beta \right) $ as a recursive
summation, in other words, we eliminate the dependence on the inverse
matrices $\bar{\Gamma}$ and $\bar{\Gamma}_{P}$.

\subsection{Eliminating the dependence on inverse matrices}

The following Lemma gives us a recursive method to obtain the coefficients $%
b_{n}$ and $c_{n}$, so the discriminant may be written as in (\ref{summ}).

\begin{theorem} \label{theorem11}
If the discriminant sampling matrices $\bar{\Gamma}$ and $\bar{\Gamma}_{P}$
are defined as in (\ref{GH}) and (\ref{GHP}) respectively then, the entries $%
b_{n}$ and $c_{n}$ are:%
\begin{eqnarray}
b_{0} &=&c_{0}=\frac{2^{2k+2}}{2^{2k+2}+\tau ^{2}\left( \alpha +\beta
p_{2^{k}}\right) }  \nonumber \\
b_{n} &=&-\psi _{n}\xi _{n}\sum_{i=0}^{n-1}S_{i}  \label{B} \\
c_{n} &=&-\psi _{n}\sum\limits_{i=0}^{n-1}\left( c_{i}\sum_{j=i+1}^{n}\mu
_{j}\right)   \label{C}
\end{eqnarray}%
for $n=1,2,\ldots ,2^{k}-1$ and with 
\begin{eqnarray*}
S_{h} &=&\sum_{i=0}^{h}b_{i} \\
Z_{h} &=&\sum_{i=0}^{h}c_{i} \\
\psi _{h} &=&\frac{4\tau ^{2}}{2^{2k+2}+\tau ^{2}\left( \alpha +\beta
p_{2^{k}-h}\right) } \\
\xi _{h} &=&\alpha +\beta p_{2^{k}-h} \\
\mu _{h} &=&\alpha +\frac{\beta }{2}\left( p_{2^{k}-h}+p_{2^{k}-h+1}\right) 
\end{eqnarray*}%
And the discriminant approximation is 
\begin{equation}
\Delta \left( \alpha ,\beta \right) =S_{2^{k-1}}+Z_{2^{k-1}}  \label{summ2}
\end{equation}
\end{theorem}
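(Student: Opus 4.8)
The plan is to reduce the whole statement to the inversion of the two explicit upper triangular matrices $\bar{\Gamma}^{-1}$ and $\bar{\Gamma}_{P}^{-1}$ displayed above, and then to read off their last columns with the recursive inversion result quoted in \ref{lemmaA2}. Recall that for a nonsingular upper triangular $U\in\mathbb{R}^{2^{k}\times 2^{k}}$ with entries $u_{m\ell}$ the last column $v$ of $U^{-1}$ satisfies $v_{2^{k}}=1/u_{2^{k}2^{k}}$ and $v_{m}=-u_{mm}^{-1}\sum_{\ell=m+1}^{2^{k}}u_{m\ell}v_{\ell}$ for $m<2^{k}$; the nonsingularity needed here is exactly the condition discussed in Remark \ref{remark10}. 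Since $b_{n}$ and $c_{n}$ are by definition the $\left(2^{k}-n\right)$-th entries of these last columns, I would set $m=2^{k}-n$ throughout and translate the recursion into the index $n$, so that the recursion — which proceeds from $m=2^{k}$ downward — becomes the stated forward recursion in $n$.

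For the $b_{n}$ the diagonal of $\bar{\Gamma}^{-1}$ is $1+\tau^{2}\left(\alpha+\beta p_{m}\right)/2^{2k+2}$ and the super-diagonal entries are $\left(\ell-m\right)\tau^{2}\left(\alpha+\beta p_{m}\right)/2^{2k}$, as shown in (\ref{MG}). Taking $m=2^{k}$ immediately gives $b_{0}=1/\left(1+\tau^{2}\left(\alpha+\beta p_{2^{k}}\right)/2^{2k+2}\right)$, which is the stated value. For $m=2^{k}-n$ with $n\geq 1$, writing $\ell=2^{k}-i$ turns $\ell-m$ into $n-i$ and $v_{\ell}$ into $b_{i}$; using $u_{mm}^{-1}=2^{2k+2}/\left(2^{2k+2}+\tau^{2}\xi_{n}\right)$ together with $2^{2k+2}/2^{2k}=4$ collapses the prefactor into exactly $\psi_{n}\xi_{n}$, so that $b_{n}=-\psi_{n}\xi_{n}\sum_{i=0}^{n-1}\left(n-i\right)b_{i}$. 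The remaining step is the combinatorial identity $\sum_{i=0}^{n-1}\left(n-i\right)b_{i}=\sum_{i=0}^{n-1}S_{i}$, which follows by writing $n-i=\#\{\,j:i\leq j\leq n-1\,\}$ and interchanging the order of summation; this yields (\ref{B}).

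The $c_{n}$ require the same recursion applied to $\bar{\Gamma}_{P}^{-1}$, and here the genuine work — the step I expect to be the main obstacle — is identifying the super-diagonal entries of $\bar{\Gamma}_{P}^{-1}=I_{2^{k}}+\tau^{2}\alpha\bar{P}^{2}/2^{2k}+\tau^{2}\beta\bar{P}\bar{\Lambda}_{\bar{r}}\bar{P}/2^{2k}$. Exploiting that $\bar{P}$ is upper triangular with diagonal $\tfrac12$ and every other nonzero entry equal to $1$, and that $\bar{\Lambda}_{\bar{r}}$ is diagonal, a direct bookkeeping of $\left(\bar{P}^{2}\right)_{m\ell}=\ell-m$ and $\left(\bar{P}\bar{\Lambda}_{\bar{r}}\bar{P}\right)_{m\ell}=\tfrac12 p_{m}+p_{m+1}+\cdots+p_{\ell-1}+\tfrac12 p_{\ell}$ shows that the $\left(m,\ell\right)$ entry is the trapezoidal sum $\tfrac{\tau^{2}}{2^{2k}}\sum_{s=m}^{\ell-1}\left(\alpha+\tfrac{\beta}{2}\left(p_{s}+p_{s+1}\right)\right)$. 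Feeding this into the recursion with $m=2^{k}-n$ and $\ell=2^{k}-i$ again produces the prefactor $\psi_{n}$, while the reindexing $j=2^{k}-s$ converts the inner trapezoidal sum into $\sum_{j=i+1}^{n}\mu_{j}$, giving (\ref{C}).

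Finally, the discriminant formula (\ref{summ2}) is immediate: by Lemma \ref{lemma8} and (\ref{summ}) we have $\Delta\left(\alpha,\beta\right)=e^{T}\left(\bar{\Gamma}+\bar{\Gamma}_{P}\right)e_{2^{k}}=\sum_{n=0}^{2^{k}-1}\left(b_{n}+c_{n}\right)$, and this last sum is precisely the right-hand side of (\ref{summ2}) by the definitions of $S_{h}$ and $Z_{h}$ evaluated at the top index $h=2^{k}-1$.
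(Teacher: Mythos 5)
Your proposal is correct and takes essentially the same route as the paper: both arguments apply the last-column recursion of Lemma \ref{lemmaA2} to the explicit upper triangular matrices $\bar{\Gamma}^{-1}$ and $\bar{\Gamma}_{P}^{-1}$, reindex via $m=2^{k}-n$, $\ell=2^{k}-i$ so the downward recursion becomes the stated forward one, absorb $u_{mm}^{-1}$ into $\psi_{n}$, and use the interchange identity $\sum_{i=0}^{n-1}\left( n-i\right) b_{i}=\sum_{i=0}^{n-1}S_{i}$ to reach (\ref{B}), (\ref{C}) and then (\ref{summ2}) via (\ref{summ}). If anything, yours is marginally tighter, since you derive the general entries $\left( \bar{P}^{2}\right) _{m\ell }=\ell -m$ and $\left( \bar{P}\bar{\Lambda}_{\bar{r}}\bar{P}\right) _{m\ell }=\tfrac{1}{2}p_{m}+p_{m+1}+\cdots +\tfrac{1}{2}p_{\ell }$ in closed form for all $m<\ell$ (consistent with Appendix C, and correcting the off-diagonal denominators $2^{2k+2}$ in the paper's display of $\bar{\Gamma}_{P}^{-1}$, which should read $2^{2k}$), whereas the paper infers the general term from the first few coefficients.
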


\begin{proof}
Defining $b_{n}$, $n=0,1,...2^{k}-1$, as the $2^{k}-n$ entry of the last
column of $\bar{\Gamma}$ and by direct application of the Lemma  in \ref{lemmaA2} 
on matrix $\bar{\Gamma}$, eq. (\ref{MG}), and doing some simple
algebraic operations one obtains%
\begin{eqnarray*}
b_{0} &=&\frac{2^{2k+2}}{2^{2k+2}+\tau ^{2}\left( \alpha +\beta
p_{2^{k}}\right) }  \nonumber \\
b_{1} &=&-b_{0}\frac{\frac{\tau ^{2}\left( \alpha +\beta p_{2^{k}-1}\right) 
}{2^{2k}}}{1+\frac{\tau ^{2}\left( \alpha +\beta p_{2^{k}-1}\right) }{%
2^{2k+2}}}  \nonumber \\
&=&4\tau ^{2}\frac{\alpha +\beta p_{2^{k}-1}}{2^{2k+2}+\tau ^{2}\left(
\alpha +\beta p_{2^{k}-1}\right) }\left( -b_{0}\right)   \nonumber \\
b_{2} &=&\frac{-b_{1}\frac{\tau ^{2}\left( \alpha +\beta p_{2^{k}-2}\right) 
}{2^{2k}}-2b_{0}\frac{\tau ^{2}\left( \alpha +\beta p_{2^{k}-2}\right) }{%
2^{2k}}}{1+\frac{\tau ^{2}\left( \alpha +\beta p_{2^{k}-2}\right) }{2^{2k+2}}%
}  \nonumber \\
&=&\frac{\frac{\tau ^{2}\left( \alpha +\beta p_{2^{k}-2}\right) }{2^{2k}}}{%
\frac{2^{2k+2}+\tau ^{2}\left( \alpha +\beta p_{2^{k}-2}\right) }{2^{2k+2}}}%
\left( -b_{1}-2b_{0}\right)   \nonumber \\
&=&4\tau ^{2}\frac{\alpha +\beta p_{2^{k}-2}}{2^{2k+2}+\tau ^{2}\left(
\alpha +\beta p_{2^{k}-2}\right) }\left( -b_{1}-2b_{0}\right)   \nonumber 
\end{eqnarray*}%

\begin{eqnarray}
b_{3} &=&4\tau ^{2}\frac{\alpha +\beta p_{2^{k}-3}}{2^{2k+2}+\tau ^{2}\left(
\alpha +\beta p_{2^{k}-3}\right) }\left( -b_{2}-2b_{1}-3b_{0}\right)  
\nonumber \\
&&\vdots   \nonumber \\
b_{n} &=&4\tau ^{2}\frac{\alpha +\beta p_{2^{k}-n}}{2^{2k+2}+\tau ^{2}\left(
\alpha +\beta p_{2^{k}-n}\right) }\left( -b_{n-1}-\ldots -\left( n-1\right)
b_{2}-nb_{0}\right)   \label{b}
\end{eqnarray}%
If we define 
\begin{eqnarray}
S_{h} &\triangleq &\sum_{i=0}^{h}b_{i}  \label{Sh} \\
\psi _{h} &\triangleq &\frac{4\tau ^{2}}{2^{2k+2}+\tau ^{2}\left( \alpha
+\beta p_{2^{k}-h}\right) }  \label{Psih} \\
\xi _{h} &\triangleq &\alpha +\beta p_{2^{k}-h}  \label{Xih}
\end{eqnarray}%
then (\ref{b}) becomes%
\begin{eqnarray*}
b_{0} &=&\frac{2^{2k+2}}{2^{2k+2}+\tau ^{2}\left( \alpha +\beta
p_{2^{k}}\right) } \\
b_{n} &=&-\psi _{n}\xi _{n}\sum_{i=0}^{n-1}S_{i}
\end{eqnarray*}%
for $n=1,2,\ldots 2^{k}-1$. By a similar procedure we can obtain the
formulas for the coefficients $c_{n}$, see appendix C%
\begin{eqnarray*}
c_{0} &=&\frac{2^{2k+2}}{2^{2k+2}+\tau ^{2}\left( \alpha +\beta
p_{2^{k}}\right) } \\
c_{n} &=&-\psi _{n}\sum\limits_{i=0}^{n-1}\left( c_{i}\sum_{j=i+1}^{n}\mu
_{j}\right)  \\
Z_{h} &\triangleq &\sum_{i=0}^{h}c_{i}
\end{eqnarray*}%
where 
\begin{equation}
\mu _{h}\triangleq \alpha +\frac{\beta }{2}\left(
p_{2^{k}-h}+p_{2^{k}-h+1}\right)   \label{Muh}
\end{equation}%
for $n=1,2,\ldots 2^{k}-1$, $\psi _{h}$ is defined as in (\ref{Psih}).

The last statement of the theorem follows from the equation (\ref{summ}) and
the definitions of the summations $S_{h}$ and $Z_{h}$
\end{proof}

Theorem \ref{theorem11} gives us a recursive method for obtaining the coefficients $b_{n}$%
, $c_{n}$ and the approximation of $\Delta \left( \alpha ,\beta \right) $.
Notice that if we define the function $q\left( t\right) \triangleq \alpha
+\beta p\left( t\right) $ then, the coefficients will depend on the
summation of $q\left( t_{n}\right) $ at $t_{n}=n\frac{\tau }{2^{k}}$ over a
subset $J^{\prime }$ of $J=\left\{ 1,2,\ldots ,2^{k}\right\} $, i.e. $b_{n}$
and $c_{n}$ depend on $\sum_{n\in J^{\prime }}q\left( t_{n}\right) $ where $%
J^{\prime }\subset J$. Now, if we do $2^{k}\rightarrow \infty $ then $\frac{%
\tau }{2^{k}}\sum_{n\in J^{\prime }}q\left( t_{n}\right) $ behaves like a
definite integral. And the next questions, arise: Could the coefficients $%
b_{n}$ and $c_{n}$ be written in terms of definite integrals? Moreover,
Could the summation of the coefficients $b_{n}$ and $c_{n}$ be written as a
summation of definite integrals? Next corollary gives the affirmative
answer to the latter question.

\begin{corollary} \label{corollary12}
If the order $2^k$ of the approximation (\ref{summ2}) is large enough so, $%
2^{k}\rightarrow \infty $ and $2^{2k+2}>>\tau ^{2}\left( \alpha +\beta
p_{2^{k}}\right) $ then, the summations of the first $n$ coefficients $b_{n}$
and $c_{n}$, $S_{n}=\sum_{i=0}^{n}b_{n}$ and $Z_{n}=\sum_{i=0}^{n}c_{n}$,
are 
\begin{eqnarray}
S_{0} &=&Z_{0}\approx 1  \nonumber \\
S_{n} &\approx &1-\delta \sum_{i=0}^{n-1}\left[ S_{i}\int\limits_{\tau
-\left( n+1\right) \delta }^{\tau -\left( i+1\right) \delta }\left( \alpha
+\beta p\left( t\right) \right) dt\right]  \label{S2} \\
Z_{n} &\approx &1-\delta \left[ \sum_{i=0}^{n-1}\left( n-i\right)
Z_{i}\int_{\tau -\left( i+2\right) \delta }^{\tau -\left( i+1\right) \delta
}\left( \alpha +\beta p\left( t\right) \right) dt\right]  \label{Z2}
\end{eqnarray}
\end{corollary}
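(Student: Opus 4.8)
The plan is to derive both approximations from two elementary facts that hold in the stated regime $2^{2k+2}\gg\tau^{2}(\alpha+\beta p_{n})$, which is valid uniformly in $n$ as $2^{k}\to\infty$ because $p$ is bounded. First, $b_{0}=c_{0}=\frac{2^{2k+2}}{2^{2k+2}+\tau^{2}(\alpha+\beta p_{2^{k}})}\to 1$, so $S_{0}=Z_{0}\approx 1$. Second, the weight $\psi_{h}=\frac{4\tau^{2}}{2^{2k+2}+\tau^{2}(\alpha+\beta p_{2^{k}-h})}$ collapses to $\psi_{h}\approx\frac{4\tau^{2}}{2^{2k+2}}=\frac{\tau^{2}}{2^{2k}}=\delta^{2}$. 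Consequently each coefficient weight $\psi_{n}\xi_{n}$ becomes $\delta^{2}q(\tau-n\delta)$ with $q(t)=\alpha+\beta p(t)$, and every factor of the form $\delta\sum q(\tau-m\delta)$ is a Riemann sum converging to a definite integral of $q$. The whole argument then reduces to: (i) summing the recursions of Theorem~\ref{theorem11}, (ii) interchanging finite double sums, and (iii) reading the surviving one-dimensional sums as Riemann sums.

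For $S_{n}$ I would start from $b_{n}=-\psi_{n}\xi_{n}\sum_{i=0}^{n-1}S_{i}$ and sum it via $S_{n}=S_{0}+\sum_{m=1}^{n}b_{m}$. After the collapse of $\psi_{m}\xi_{m}$ to $\delta^{2}q(\tau-m\delta)$ this reads
\[
S_{n}\approx 1-\delta^{2}\sum_{m=1}^{n}q(\tau-m\delta)\sum_{i=0}^{m-1}S_{i}.
\]
Interchanging the order of the finite double sum (Fubini for sums) pulls $S_{i}$ outside and leaves the inner sum $\delta\sum_{m=i+1}^{n}q(\tau-m\delta)$, which is exactly the right-endpoint Riemann sum over the subintervals $[\tau-(m+1)\delta,\tau-m\delta]$; their union is $[\tau-(n+1)\delta,\tau-(i+1)\delta]$, so in the limit $2^{k}\to\infty$ the inner sum becomes $\int_{\tau-(n+1)\delta}^{\tau-(i+1)\delta}q(t)\,dt$. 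This reproduces (\ref{S2}).

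The harder half is $Z_{n}$, because the recursion $c_{n}=-\psi_{n}\sum_{i=0}^{n-1}c_{i}\sum_{j=i+1}^{n}\mu_{j}$ is a genuine double sum in the $c_{i}$, whereas the target (\ref{Z2}) is stated in terms of the cumulative sums $Z_{i}$ with the triangular weight $(n-i)$. The device I would use is a telescoping identity for $A_{n}:=\sum_{i=0}^{n-1}c_{i}\sum_{j=i+1}^{n}\mu_{j}$, namely $A_{n}-A_{n-1}=\mu_{n}\sum_{i=0}^{n-1}c_{i}=\mu_{n}Z_{n-1}$, which converts the $c_{i}$-double sum into the single sum $A_{n}=\sum_{l=1}^{n}\mu_{l}Z_{l-1}$, hence $c_{n}\approx-\delta^{2}\sum_{l=1}^{n}\mu_{l}Z_{l-1}$. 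Summing $Z_{n}=c_{0}+\sum_{m=1}^{n}c_{m}$ and interchanging once more via $\sum_{m=1}^{n}\sum_{l=1}^{m}\mu_{l}Z_{l-1}=\sum_{l=1}^{n}(n-l+1)\mu_{l}Z_{l-1}$ gives, after reindexing $i=l-1$, the expression $Z_{n}\approx 1-\delta^{2}\sum_{i=0}^{n-1}(n-i)\mu_{i+1}Z_{i}$. Identifying the trapezoidal average $\mu_{i+1}$ with its leading endpoint value $q(\tau-(i+1)\delta)$ and recognizing $\delta\,q(\tau-(i+1)\delta)$ as the single-subinterval right-endpoint Riemann contribution $\int_{\tau-(i+2)\delta}^{\tau-(i+1)\delta}q(t)\,dt$ then yields (\ref{Z2}).

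I expect the $Z_{n}$ resummation to be the main obstacle: it is the telescoping step $A_{n}-A_{n-1}=\mu_{n}Z_{n-1}$ that turns the $c_{i}$ recursion into the $Z_{i}$ form, and it must be set up carefully at the lower summation limits—in particular at the endpoint $n=1$, where $c_{0}=1$ is handled separately and is precisely the source of the leading constant $1$ rather than of a spurious extra unit. A secondary point requiring attention is the bookkeeping of the discarded corrections: replacing $\psi_{h}$ by $\delta^{2}$, replacing $\mu_{i+1}$ by the endpoint value $q(\tau-(i+1)\delta)$, and passing from Riemann sums to integrals each introduces an $O(\delta)$ error in the integrand location. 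These are legitimate because $p$ is bounded and $2^{k}\to\infty$, so that every such correction enters at higher order in $\delta$ and is absorbed by the symbol $\approx$.
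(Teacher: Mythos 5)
Your proposal is correct, and for $S_n$ it is essentially the paper's own argument: both collapse $\psi_n\xi_n$ to $\delta^2 q(\tau-n\delta)$, sum the recursion $b_n=-\psi_n\xi_n\sum_{i=0}^{n-1}S_i$, interchange the finite double sum, and read $\delta\sum_{j=i+1}^{n}\xi_j$ as a Riemann sum for $\int_{\tau-(n+1)\delta}^{\tau-(i+1)\delta}q(t)\,dt$. For $Z_n$, however, you take a genuinely different route. The paper first converts the inner sum $\delta\sum_{j=i+1}^{n}\mu_j$ into the long integral $\int_{\tau-(n+1)\delta}^{\tau-(j+1)\delta}q(t)\,dt$, then computes $Z_1$, $Z_2$, $Z_3$ explicitly, splits each long integral into unit-subinterval integrals, regroups so that the cumulative sums $c_0+\cdots+c_i=Z_i$ and the multiplicities $(n-i)$ emerge, and infers the general formula from the pattern --- the ``$\vdots$'' step is never carried out for general $n$. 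Your telescoping identity $A_n-A_{n-1}=\mu_n Z_{n-1}$ for $A_n=\sum_{i=0}^{n-1}c_i\sum_{j=i+1}^{n}\mu_j$ (which is easily verified: the passage from $A_{n-1}$ to $A_n$ adds $\mu_n$ to every inner sum and appends the term $c_{n-1}\mu_n$) performs exactly that multiplicity count in closed form: it gives $c_n\approx-\delta^2\sum_{l=1}^{n}\mu_l Z_{l-1}$, and the single interchange $\sum_{m=1}^{n}\sum_{l=1}^{m}=\sum_{l=1}^{n}(n-l+1)$ then produces the weight $(n-i)Z_i$ for all $n$ at once, with the unit integral $\int_{\tau-(i+2)\delta}^{\tau-(i+1)\delta}q(t)\,dt$ introduced only at the last step. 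What your route buys is a complete derivation of the coefficient structure where the paper offers only verified small cases followed by pattern inference; what the paper's route buys is that the integrals appear early, so the final formula can be read off directly from the regrouped small cases. Your closing observation is also accurate and worth keeping: $\delta\mu_{i+1}$ is the trapezoidal value of $q$ on $[\tau-(i+1)\delta,\tau-i\delta]$, one subinterval to the right of the integral actually written in (\ref{Z2}), so identifying the two costs an $O(\delta)$ shift in the integrand location --- an error of exactly the kind the paper silently absorbs into ``$\approx$'' as well (its own substitution of $\mu_j$ for $\xi_j$ when invoking the limit formula from the $S_n$ part is of the same order), and legitimately negligible since $p$ is bounded and $\delta\to 0$.
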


\begin{proof}
As we are assuming that $2^{2k+2}>>\tau ^{2}\left( \alpha +\beta
p_{2^{n}}\right) $ then (\ref{Psih}) becomes%
\begin{equation}
\delta ^{2}\triangleq \psi _{h}\approx \frac{\tau ^{2}}{2^{2k}}  \label{de}
\end{equation}%
moreover%
\[
b_{0}=c_{0}\approx 1 
\]

For the first part of the corollary we must notice that the summation $%
S_{n}=\sum_{i=0}^{n}b_{n}$ can be written as%
\begin{equation}
S_{n}\approx 1-\delta ^{2}\sum_{i=0}^{n-1}\left[ S_{i}\sum%
\limits_{j=i+1}^{n}\xi _{j}\right]  \label{SSb}
\end{equation}%
it follows since%
\begin{eqnarray*}
b_{0} &\approx &1 \\
b_{1} &\approx &-\delta ^{2}\xi _{1}S_{0} \\
b_{2} &\approx &-\delta ^{2}\xi _{2}\left( S_{0}+S_{1}\right) \\
&&\vdots \\
b_{n} &\approx &-\delta ^{2}\xi _{n}\left( S_{0}+S_{1}+\ldots
+S_{n-2}+S_{n-1}\right)
\end{eqnarray*}%
adding the coefficients $b_{0}$ to $b_{n}$ and grouping terms we have 
\[
\sum_{i=0}^{n}b_{n}\approx 1-\delta ^{2}\left[ S_{0}\left( \xi _{1}+\ldots
+\xi _{n}\right) +S_{1}\left( \xi _{2}+\ldots +\xi _{n}\right) +\ldots
+S_{n-1}\left( \xi _{n}\right) \right] 
\]%
thus (\ref{SSb}) follows.

From the definition of $\xi _{h}$ and $\delta $, equations (\ref{Xih}) and (%
\ref{de}) respectively, one can notice that for $\ell <n$ 
\[
\lim_{\delta \rightarrow 0}\delta \sum_{i=\ell }^{n}\xi _{i}\approx
\int\limits_{\tau -\left( n+1\right) \delta }^{\tau -\left( \ell +1\right)
\delta }\left( \alpha +\beta p\left( t\right) \right) dt 
\]%
then, we can rewrite (\ref{SSb}) as a summation that depends on integrals 
\[
S_{n}\approx 1-\delta \sum_{i=0}^{n-1}\left[ S_{i}\int\limits_{\tau -\left(
n+1\right) \delta }^{\tau -\left( i+1\right) \delta }\left( \alpha +\beta
p\left( t\right) \right) dt\right] 
\]

For the second part of the corollary, we directly apply the definite integral definition to (\ref{C}) and one obtains 
\begin{eqnarray*}
c_{n} &\approx &-\delta ^{2}\sum\limits_{i=0}^{n-1}\left(
c_{i}\sum_{j=i+1}^{n}\mu _{j}\right) \\
&=&-\delta \sum\limits_{j=0}^{n-1}\left( c_{j}\int\limits_{\tau -\left(
n+1\right) \delta }^{\tau -\left( j+1\right) \delta }\left( \alpha +\beta
p\left( t\right) \right) dt\right)
\end{eqnarray*}%
thus, if $Z_{n}=\sum_{i=0}^{n}c_{n}$%
\begin{eqnarray*}
Z_{0} &\approx &1 \\
Z_{1} &\approx &1-\delta \left( c_{0}\int\limits_{\tau -2\delta }^{\tau
-\delta }\left( \alpha +\beta p\left( t\right) \right) dt\right) \\
Z_{2} &\approx &1-\delta \left( c_{0}\int\limits_{\tau -2\delta }^{\tau
-1\delta }\left( \alpha +\beta p\left( t\right) \right)
dt+c_{0}\int\limits_{\tau -3\delta }^{\tau -1\delta }\left( \alpha +\beta
p\left( t\right) \right) dt+c_{1}\int\limits_{\tau -3\delta }^{\tau -2\delta
}\left( \alpha +\beta p\left( t\right) \right) dt\right) \\
&=&1-\delta \left( 2c_{0}\int\limits_{\tau -2\delta }^{\tau -1\delta }\left(
\alpha +\beta p\left( t\right) \right) dt+\left( c_{0}+c_{1}\right)
\int\limits_{\tau -3\delta }^{\tau -2\delta }\left( \alpha +\beta p\left(
t\right) \right) dt\right) \\
Z_{3} &\approx&1-\delta \left( 3c_{0}\int\limits_{\tau -2\delta }^{\tau
-1\delta }\left( \alpha +\beta p\left( t\right) \right) dt+2\left(
c_{0}+c_{1}\right) \int\limits_{\tau -3\delta }^{\tau -2\delta }\left(
\alpha +\beta p\left( t\right) \right) dt+\left( c_{0}+c_{1}+c_{2}\right)
\int\limits_{\tau -4\delta }^{\tau -3\delta }\left( \alpha +\beta p\left(
t\right) \right) dt\right) \\
&&\vdots \\
Z_{n} &\approx &1-\delta \left[ \sum_{i=0}^{n-1}\left( n-i\right)
Z_{i}\int\limits_{\tau -\left( i+2\right) \delta }^{\tau -\left( i+1\right)
\delta }\left( \alpha +\beta p\left( t\right) \right) dt\right]
\end{eqnarray*}
\end{proof}

Corollary \ref{corollary12} not only gives us a recursive method to obtain the summation of
the coefficients $b_{n}$ and $c_{n}$, but, by doing some considerations, it
transforms the dependence of the "discrete" approximation of Theorem \ref{theorem11} on
some discrete values of the excitation function $p\left( t\right) $ into a
dependence on the definite integral of the continuous function $p\left(
t\right) $. We should notice that the new expressions of the summations of the
coefficients $b_{n}$ and $c_{n}$ ($S_{n}$ and $Z_{n}$ respectively) depend
on $\delta $ and on the summation of a large number of definite integrals, so
it seems that the discriminant approximation $\Delta \left( \alpha ,\beta
\right) \approx $ $S_{2^{k}-1}+Z_{2^{k}-1}$ can be reduced even more.

Now we are ready to prove the second main result of this work. In the next
Theorem we give an alternative proof of the discriminant approximation
obtained by Lyapunov in his outstanding work \cite{Lyapunov}, see Section 1.
This new proof is based on Theorem \ref{theorem11} and corollary \ref{corollary12}, and is completely
independent of the proof made by Lyapunov.

\begin{theorem} \label{theorem13}
If $\Delta \left( \alpha ,\beta \right) $ is the discriminant of a second
order periodic differential equation%
\[
\ddot{x}+q\left( t\right) x=0\text{, \ \ }q\left( \tau +t\right) =q\left(
t\right) 
\]%
then, $\Delta \left( \alpha ,\beta \right) $ can be expressed as an
alternating series 
\[
\Delta \left( \alpha ,\beta \right) =2-A_{1}+A_{2}+\ldots +\left( -1\right)
^{n}A_{n} 
\]%
where the constants $A_{n}$ $n=1,2,\ldots ,2^{k}-1$ are defined as the
multiple integrals%
\begin{eqnarray*}
A_{0} &=&2,\text{ \ \ \ \ \ \ \ \ \ \ }A_{1}=\tau \int_{0}^{\tau }q\left(
t_{1}\right) dt_{1} \\
A_{2} &=&\int_{0}^{\tau }dt_{1}\int_{0}^{t_{1}}\left( \tau
-t_{1}+t_{2}\right) \left( t_{1}-t_{2}\right) q\left( t_{1}\right) q\left(
t_{2}\right) dt_{2} \\
&&\vdots \\
A_{n} &=&\int_{0}^{\tau }dt_{1}\int_{0}^{t_{1}}dt_{2}\ldots
\int_{0}^{t_{n-1}}\left( \tau -t_{1}+t_{n}\right) \left( t_{1}-t_{2}\right)
\ldots \left( t_{n-1}-t_{n}\right) q\left( t_{1}\right) q\left( t_{2}\right)
\ldots q\left( t_{n}\right) dt_{n}
\end{eqnarray*}
\end{theorem}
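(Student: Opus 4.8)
The plan is to combine the identity $\Delta(\alpha,\beta)=S_{2^{k}-1}+Z_{2^{k}-1}$ from Theorem \ref{theorem11} with the integral recursions (\ref{S2}) and (\ref{Z2}) of Corollary \ref{corollary12}, and to pass to the limit $2^{k}\to\infty$ so that the outer weighted sums $\delta\sum_{i}$ themselves become Riemann integrals. In this limit the two recursions turn into Volterra integral equations for continuous functions $S(s)$ and $Z(s)$ on $[0,\tau]$, with $\Delta(\alpha,\beta)=S(0)+Z(0)$. Writing $q(t)=\alpha+\beta p(t)$ and interchanging the order of integration in (\ref{S2}), I would record them in the simplified forms $S(s)=1-\int_{s}^{\tau}q(t)\bigl(\int_{t}^{\tau}S(u)\,du\bigr)\,dt$ and $Z(s)=1-\int_{s}^{\tau}(u-s)\,q(u)\,Z(u)\,du$, where the factor $(u-s)$ is the limit of the weight $(n-i)\delta$ appearing in (\ref{Z2}).

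Next I would solve both equations by successive approximation (Picard iteration), exactly as Lyapunov builds his series. Each substitution of the equation into itself raises the number of factors $q$ by one and flips the sign, producing series $S(0)=\sum_{n\ge0}(-1)^{n}S^{(n)}$ and $Z(0)=\sum_{n\ge0}(-1)^{n}Z^{(n)}$ in which $S^{(n)}$ and $Z^{(n)}$ are $n$-fold integrals of products $q(t_{1})\cdots q(t_{n})$. The base terms are immediate: $S^{(0)}=Z^{(0)}=1$, so their sum reproduces $A_{0}=2$; and a single change of order gives $S^{(1)}(0)=\int_{0}^{\tau}(\tau-t)q(t)\,dt$ together with $Z^{(1)}(0)=\int_{0}^{\tau}t\,q(t)\,dt$, whose sum is $\tau\int_{0}^{\tau}q(t)\,dt=A_{1}$. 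This already exhibits the mechanism I expect to persist: the $S$-series supplies the $(\tau-t_{1})$ part of the Lyapunov kernel while the $Z$-series supplies the $t_{n}$ part.

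The core of the argument is therefore an induction on $n$, in which I claim that after repeatedly applying Fubini on the ordered simplex $0<t_{n}<\cdots<t_{1}<\tau$ one obtains
\[
S^{(n)}(0)=\int_{0<t_{n}<\cdots<t_{1}<\tau}(\tau-t_{1})\prod_{j=1}^{n-1}(t_{j}-t_{j+1})\prod_{j=1}^{n}q(t_{j})\,dt,
\]
and the same expression for $Z^{(n)}(0)$ with the leading factor $(\tau-t_{1})$ replaced by $t_{n}$. Adding the two collapses the leading factors into $(\tau-t_{1}+t_{n})$, so the $n$-th term of $S(0)+Z(0)$ is precisely $(-1)^{n}A_{n}$, which yields the alternating series $\Delta(\alpha,\beta)=2-A_{1}+A_{2}-\cdots+(-1)^{n}A_{n}+\cdots$.

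The hard part will be the inductive step: showing that the nested integrals coming out of one more Picard substitution reorganize, after interchanging orders of integration, into exactly the product of consecutive differences $\prod_{j}(t_{j}-t_{j+1})$ together with the correct leading weight. Concretely, each swap must convert an inner integration limit into a new difference factor $(t_{j}-t_{j+1})$ while leaving the simplex ordering intact, and one must verify that the fixed upper limit $\tau$ combines with the variable lower limits to give $(\tau-t_{1})$ in the $S$ case and that the accumulated innermost weight gives $t_{n}$ in the $Z$ case; this is the delicate bookkeeping. A secondary point needing care is justifying the interchange of the $2^{k}\to\infty$ limit with the infinite successive-approximation sum, for which the boundedness of $q$ on $[0,\tau]$ and the standard uniform convergence of the Picard iterates for a Volterra equation should suffice.
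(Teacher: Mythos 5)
Your proposal is correct, and it takes a genuinely different route from the paper's proof. The paper stays discrete as long as possible: it abbreviates the integrals in (\ref{S2})--(\ref{Z2}) as constants $\mathbb{I}_{j,\ell}$, expands the two recursions combinatorially in powers of $\delta$ (Lemma \ref{lemmaA41}), defines $A_{n}$ as the coefficient of $\delta^{n}$, converts each nested sum of products of $\mathbb{I}$'s into a multiple integral as $\delta\rightarrow 0$, and only at the very end reorders integration variables (Lemma \ref{lemmaA42}) to reach Lyapunov's kernel. You instead pass to the continuum first, and your two Volterra equations are indeed the correct limits of (\ref{S2}) and (\ref{Z2}): setting $s=\tau-\left( n+1\right) \delta$ and $u=\tau-\left( i+1\right) \delta$ turns (\ref{S2}) into $S\left( s\right) =1-\int_{s}^{\tau }S\left( u\right) \int_{s}^{u}q\left( t\right) dt\,du$, which Fubini over $\left\{ s<t<u<\tau \right\}$ converts to your form, while in (\ref{Z2}) the weight $\left( n-i\right) \delta$ is exactly $u-s$ and the short integral contributes $\delta\, q\left( u\right)$, giving your kernel $\left( u-s\right) q\left( u\right)$; a sanity check with $q\equiv \omega ^{2}$ yields $S\left( 0\right) =Z\left( 0\right) =\cos \omega \tau$, hence $\Delta =2\cos \omega \tau$ as required. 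The inductive step you flag as delicate in fact goes through cleanly: in the $S$ iteration the intermediate $u$-integration over $\left( t_{j+1},t_{j}\right)$ produces precisely the factor $\left( t_{j}-t_{j+1}\right)$ at each substitution, so $S^{\left( n\right) }\left( s\right) =\int_{s<t_{n}<\cdots <t_{1}<\tau }\left( \tau -t_{1}\right) \prod_{j=1}^{n-1}\left( t_{j}-t_{j+1}\right) \prod_{j=1}^{n}q\left( t_{j}\right) dt$, while in the $Z$ iteration the kernel $\left( u-s\right)$ itself supplies the new innermost difference, yielding the same expression with leading factor $\left( t_{n}-s\right)$; at $s=0$ the two leading factors add to $\tau -t_{1}+t_{n}$, exactly as you predict, and the Neumann series of a Volterra operator is automatically sign-alternating. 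As to what each approach buys: your route replaces the paper's heaviest bookkeeping (the power-of-$\delta$ expansion of Lemma \ref{lemmaA41} and the rearrangements of Lemma \ref{lemmaA42}) by a single clean simplex-Fubini induction, and the standard factorial bounds on Volterra iterates for bounded $q$ give you a sharper justification of the term-by-term limit than the paper attempts; conversely, the paper's route keeps the ``discrete Lyapunov'' structure of Theorem \ref{theorem11} visible at every order and never has to argue that the solutions of the discrete recursions of Corollary \ref{corollary12} converge to the solutions of your integral equations --- that Euler-scheme consistency step, which you correctly identify, is the one analytic debt your argument carries beyond what the paper itself leaves informal, and it is discharged by the uniform bounds you invoke.
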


\begin{proof}
We know that $\Delta \left( \alpha ,\beta \right) =S_{2^{k}-1}+Z_{2^{k}-1}$,
moreover, corollary \ref{corollary12} gives us recursive formulas to obtain $S_{n}$ and $%
Z_{n}$. The proof of this theorem is based on rewriting the formulas for $%
S_{n}$ and $Z_{n}$, this rewriting must replace the recursion by an
expansion in terms of powers of the parameter $\delta =\frac{\tau }{2^{k}}$.
If we define 
\[
\mathbb{I}_{n,m}\triangleq \int_{\tau -n\delta }^{\tau -m\delta }q\left(
t_{1}\right) dt_{1} 
\]%
then, $S_{n}$ and $Z_{n}$, equation (\ref{S2}) and (\ref{Z2}) respectively,
are 
\begin{eqnarray*}
S_{0} &=&Z_{0}=1 \\
S_{n} &=&1-\delta \sum_{i=0}^{n-1}\left[ S_{i}\mathbb{I}_{n+1,i+1}\right] \\
Z_{n} &=&1-\delta \left[ \sum_{i=0}^{n-1}\left( n-i\right) Z_{i}\mathbb{I}%
_{i+2,i+1}\right]
\end{eqnarray*}

Following the formula for $S_{n}$ and $Z_{n}$ and grouping terms of powers
of $\delta $ one obtains, see Lemma in \ref{lemmaA41}%
\begin{eqnarray*}
S_{n} &=&1-\delta \sum_{i=1}^{n}\mathbb{I}_{n+1,i}+\delta ^{2}\sum_{i=2}^{n}%
\mathbb{I}_{n+1,i}\sum_{j=1}^{i-1}\mathbb{I}_{i,j}-\delta ^{3}\sum_{i=3}^{n}%
\mathbb{I}_{n+1,i}\sum_{j=1}^{i-1}\mathbb{I}_{i,j}\sum_{l=1}^{j}\mathbb{I}%
_{j,l}+\ldots \\
&& \quad\quad\quad\quad\quad\quad\quad\quad\quad\quad\quad\quad\quad\quad\quad\quad\quad\quad\quad\quad\quad\ldots+\left( -1\right) ^{\bar{n}}\delta ^{\bar{n}}\sum_{i=\bar{n}%
}^{n}\mathbb{I}_{n+1,i}\sum_{j=1}^{i-1}\mathbb{I}_{i,j}\sum_{l=1}^{j}\mathbb{%
I}_{j,l}\ldots \sum_{y=1}^{x}\mathbb{I}_{x,y} \\
Z_{n} &=&1-\delta \sum_{i=2}^{n+1}\mathbb{I}_{i,1}+\delta ^{2}\sum_{i=2}^{n}%
\mathbb{I}_{i,1}\sum_{j=i+1}^{n+1}\mathbb{I}_{j,i}-\delta
^{3}\sum_{i=2}^{n-1}\mathbb{I}_{i,1}\sum_{j=i+1}^{n}\mathbb{I}%
_{j,i}\sum_{l=j+1}^{n+1}\mathbb{I}_{l,j}+\ldots\\
&& \quad\quad\quad\quad\quad\quad\quad\quad\quad\quad\quad\quad\quad\quad\quad\quad\quad\quad\quad\quad\quad\quad \ldots+\left( -1\right) ^{\bar{n}%
}\delta ^{\bar{n}}\sum_{i=2}^{n+2-\bar{n}}\mathbb{I}_{i,1}\sum_{j=i+1}^{n+3-%
\bar{n}}\mathbb{I}_{j,i}\ldots \sum_{y=x+1}^{n+1}\mathbb{I}_{y,x}
\end{eqnarray*}

As we have said the discriminant $\Delta \left( \alpha ,\beta \right) $ is
approximately equal to the addition of $S_{2^{k}-1}$ and $Z_{2^{k}-1}$, then 
$\Delta \left( \alpha ,\beta \right) $ may be written as an expansion of
powers of $\delta $. If we define the coefficients $A_{n}$, $n=0,1,2,\ldots
,2^{k}-1$, as the terms associated to the $n-th$ power of $\delta $, i.e. 
\begin{eqnarray}
A_{0} &=&2  \nonumber \\
A_{1} &=&\delta \left( \sum_{i=1}^{2^{k}-1}\mathbb{I}_{2^{k},i}+%
\sum_{i=2}^{2^{k}}\mathbb{I}_{i,1}\right)   \nonumber \\
A_{2} &=&\delta ^{2}\left[ \sum_{i=2}^{2^{k}-1}\left( \mathbb{I}%
_{2^{k},i}\sum_{j=1}^{i-1}\mathbb{I}_{i,j}\right)
+\sum_{i=2}^{2^{k}-1}\left( \mathbb{I}_{i,1}\sum_{j=i+1}^{2^{k}-1+1}\mathbb{I%
}_{j,i}\right) \right]   \label{AA} 
\end{eqnarray}%
\begin{eqnarray*}
A_{3} &=&\delta ^{3}\left[ \sum_{i=3}^{n}\left( \mathbb{I}%
_{n+1,i}\sum_{j=1}^{i-1}\mathbb{I}_{i,j}\sum_{l=1}^{j}\mathbb{I}%
_{j,l}\right) +\sum_{i=2}^{n-1}\left( \mathbb{I}_{i,1}\sum_{j=i+1}^{n}%
\mathbb{I}_{j,i}\sum_{l=j+1}^{n+1}\mathbb{I}_{l,j}\right) \right]   \nonumber
\\
&&\vdots   \nonumber \\
A_{n} &=&\delta ^{n}\left[ \sum_{i=n}^{2^{k}-1}\left( \mathbb{I}%
_{2^{k},i}\sum_{j=1}^{i-1}\mathbb{I}_{i,j}\sum_{l=1}^{j}\mathbb{I}%
_{j,l}\ldots \sum_{y=1}^{x}\mathbb{I}_{x,y}\right)
+\sum_{i=2}^{2^{k}+1-n}\left( \mathbb{I}_{i,1}\sum_{j=i+1}^{2^{k}+2-n}%
\mathbb{I}_{j,i}\ldots \sum_{y=x+1}^{2^{k}}\mathbb{I}_{y,x}\right) \right]  
\nonumber
\end{eqnarray*}%
then the discriminant is 
\[
\Delta \left( \alpha ,\beta \right) =A_{0}-A_{1}+A_{2}+\ldots +\left(
-1\right) ^{n}A_{n}
\]

Notice that the coefficients $A_{n}$ in (\ref{AA}) can be rewritten as the
multiple integrals 
\begin{eqnarray*}
A_{1} &=&\int_{0}^{\tau -\delta }\int_{0}^{t_{1}}q\left( t_{2}\right)
dt_{2}dt_{1}+\int_{0}^{\tau -\delta }\int_{t_{1}}^{\tau -\delta }q\left(
t_{2}\right) dt_{2}dt_{1} \\
A_{2} &=&\int_{\delta }^{\tau -\delta }\int_{0}^{t_{1}}q\left( t_{2}\right)
dt_{2}\int_{t_{1}}^{\tau -\delta }\int_{t_{1}}^{t_{3}}q\left( t_{4}\right)
dt_{4}dt_{3}dt_{1}+\int_{\delta }^{\tau -\delta }\int_{t_{1}}^{\tau -\delta
}q\left( t_{2}\right) dt_{2}\int_{0}^{t_{1}}\int_{t_{3}}^{t_{1}}q\left(
t_{4}\right) dt_{4}dt_{3}dt_{1} \\
A_{3} &=&\int_{\delta }^{\tau -3\delta }\int_{0}^{t_{1}}q\left( t_{2}\right)
dt_{2}\int_{t_{1}}^{\tau -\delta }\int_{t_{1}}^{t_{3}}q\left( t_{4}\right)
dt_{4}\int_{t_{3}}^{\tau -\delta }\int_{t_{3}}^{t_{5}}q\left( t_{6}\right)
dt_{6}dt_{5}dt_{3}dt_{1} \\
&&+\int_{2\delta }^{\tau -\delta }\int_{t_{1}}^{\tau -\delta }q\left(
t_{2}\right) dt_{2}\int_{\delta }^{t_{1}}\int_{t_{3}}^{t_{1}}q\left(
t_{4}\right) dt_{4}\int_{0}^{t_{3}}\int_{t_{5}}^{t_{3}}q\left( t_{6}\right)
dt_{6}dt_{5}dt_{3}dt_{1} \\
&&\vdots \\
A_{n} &=&\int_{\delta }^{\tau -n\delta }\int_{0}^{t_{1}}q\left( t_{2}\right)
\ldots \int_{t_{2n-5}}^{\tau -\delta }\int_{t_{2n-5}}^{t_{2\bar{n}%
-3}}q\left( t_{2n-2}\right) \int_{t_{2n-3}}^{\tau -\delta
}\int_{t_{2n-3}}^{t_{2n-1}}q\left( t_{2n}\right)
dt_{2n}dt_{2n-1}\ldots dt_2 dt_{1} \\
&&+\int_{2\delta }^{\tau -\delta }\int_{t_{1}}^{\tau -\delta }q\left(
t_{2}\right) \ldots \int_{\delta
}^{t_{2n-5}}\int_{t_{2n-3}}^{t_{2n-5}}q\left( t_{_{2n-2}}\right)
\int_{0}^{t_{2n-3}}\int_{t_{2n-1}}^{t_{2n-3}}q\left(
t_{2n}\right) dt_{2n}dt_{2n-1}\ldots dt_2 dt_{1}
\end{eqnarray*}%
since $2^{k}\rightarrow \infty $ then, $\delta \rightarrow 0$ and for values
of $\bar{n}$ such that $\bar{n}\delta \rightarrow 0$ then 
\begin{eqnarray}
A_{1} &=&\int_{0}^{\tau }\int_{0}^{t_{1}}q\left( t_{2}\right)
dt_{2}dt_{1}+\int_{0}^{\tau }\int_{t_{1}}^{\tau }q\left( t_{2}\right)
dt_{2}dt_{1}  \nonumber \\
A_{2} &=&\int_{0}^{\tau }\int_{0}^{t_{1}}\int_{t_{1}}^{\tau
}\int_{t_{1}}^{t_{3}}q\left( t_{2}\right) q\left( t_{4}\right)
dt_{4}dt_{3}dt_{2}dt_{1}+\int_{0}^{\tau }\int_{t_{1}}^{\tau
}\int_{0}^{t_{1}}\int_{t_{3}}^{t_{1}}q\left( t_{2}\right) q\left(
t_{4}\right) dt_{4}dt_{3}dt_{2}dt_{1}  \nonumber \\
A_{3} &=&\int_{0}^{\tau }\int_{0}^{t_{1}}\int_{t_{1}}^{\tau
}\int_{t_{1}}^{t_{3}}\int_{t_{3}}^{\tau }\int_{t_{3}}^{t_{5}}q\left(
t_{2}\right) q\left( t_{4}\right) q\left( t_{6}\right)
dt_{6}dt_{5}dt_{4}dt_{3}dt_{2}dt_{1}  \label{re} \\
&&+\int_{0}^{\tau }\int_{t_{1}}^{\tau
}\int_{0}^{t_{1}}\int_{t_{3}}^{t_{1}}\int_{0}^{t_{3}}\int_{t_{5}}^{t_{3}}q%
\left( t_{2}\right) q\left( t_{4}\right) q\left( t_{6}\right)
dt_{6}dt_{5}dt_{4}dt_{3}dt_{2}dt_{1}  \nonumber \\
&&\vdots  \nonumber \\
A_{\bar{n}} &=&\int_{0}^{\tau }\int_{0}^{t_{1}}\ldots \int_{t_{2\bar{n}%
-5}}^{\tau }\int_{t_{2\bar{n}-5}}^{t_{2\bar{n}-3}}\int_{t_{2\bar{n}%
-3}}^{\tau }\int_{t_{2\bar{n}-3}}^{t_{2\bar{n}-1}}p\left( t_{2}\right)
\ldots p\left( t_{2\bar{n}-2}\right) p\left( t_{2\bar{n}}\right) dt_{2\bar{n}%
}dt_{2\bar{n}-1}\ldots dt_{2}dt_{1}  \nonumber
\\
&&+\int_{0}^{\tau }\int_{t_{1}}^{\tau }\ldots \int_{0}^{t_{2\bar{n}%
-5}}\int_{t_{2\bar{n}-3}}^{t_{2\bar{n}-5}}\int_{0}^{t_{2\bar{n}-3}}\int_{t_{2%
\bar{n}-1}}^{t_{2\bar{n}-3}}p\left( t_{2}\right) \ldots p\left( t_{2\bar{n}%
-2}\right) p\left( t_{2\bar{n}}\right) dt_{2\bar{n}}dt_{2\bar{n}-1}\ldots dt_{2}dt_{1}  \nonumber
\end{eqnarray}%
It remains to be proven that the coefficients $A_{n}$ are equal to the
coefficients obtained by Lyapunov, which is simply done by rearranging the
integration variables, see Lemma in \ref{lemmaA42} for the first three
coefficients, 
\begin{eqnarray*}
A_{1} &=&\tau \int_{0}^{\tau }q\left( t_{2}\right) dt_{2} \\
A_{2} &=&\int_{0}^{\tau }\int_{0}^{t_{2}}\left( \tau -t_{2}+t_{1}\right)
\left( t_{2}-t_{1}\right) q\left( t_{1}\right) q\left( t_{2}\right)
dt_{1}dt_{2} \\
A_{3} &=&\int_{0}^{\tau }\int_{0}^{t_{6}}\int_{0}^{t_{4}}\left( \tau
-t_{6}+t_{2}\right) \left( t_{6}-t_{4}\right) \left( t_{4}-t_{2}\right)
q\left( t_{2}\right) q\left( t_{4}\right) q\left( t_{6}\right)
dt_{2}dt_{4}dt_{6}
\end{eqnarray*}%
the general term is 
\[
A_{n}=\int_{0}^{\tau }dt_{1}\int_{0}^{t_{1}}dt_{2}\ldots \int_{0}^{t_{\bar{n}%
-1}}\left( \tau -t_{1}+t_{n}\right) \left( t_{1}-t_{2}\right) \ldots \left(
t_{n-1}-t_{n}\right) q\left( t_{1}\right) q\left( t_{2}\right) \ldots
q\left( t_{n}\right) dt_{n} 
\]%
thus, the theorem follows.
\end{proof}

Notice that the coefficients $A_{n}$ of Theorem \ref{theorem13} are equal to the
coefficients of the Lyapunov approximation, see section 1, but for a factor $%
\frac{1}{2}$, this difference is because we consider the discriminant as $%
\Delta \left( \alpha ,\beta \right) =x_{1}\left( \tau \right) +\dot{x}%
_{2}\left( \tau \right) $ and Lyapunov defined its characteristic constant
as $A=\frac{1}{2}\left( x_{1}\left( \tau \right) +\dot{x}_{2}\left( \tau
\right) \right) $.

\begin{remark} \label{remark14}
Since the approximation made by Lyapunov depends on multiple integrals (the
number of multiple integrals that one has to calculate is equal to the
sub index of each coefficient $A_{n}$), it is very hard to compute.
Nevertheless, Theorem \ref{theorem11} give us a recursive method for obtaining the
approximation of $\Delta \left( \alpha ,\beta \right) $.
\end{remark}

By doing $2^{k}\rightarrow \infty $, we have taken the approximation of
Theorem \ref{theorem11}, which depends on some discrete values of the function $p\left(
t\right) $ and we have transformed it to an approximation that depends on
definite integrals of the excitation function $p\left( t\right) $. So, one
can say that Theorem \ref{theorem11} may be seen as a "discrete" form of the discriminant
approximation made by Lyapunov.

\section{Numerical calculation of the discriminant approximation $\Delta \left( \protect\alpha ,\protect\beta \right) $}

As we know the discriminant $\Delta \left( \alpha ,\beta \right) $ plays a
very important role in the determination of the stability zones of linear
periodic differential equations. By using Theorem \ref{theorem11} we are able to compute
an approximation of the discriminant $\Delta \left( \alpha ,\beta \right) $
at each point of the $\alpha -\beta $ plane and then use the stability
conditions of Theorem \ref{theorem4} to find the stable zones ($\left\vert \Delta \left(
\alpha ,\beta \right) \right\vert <2$), unstable zones ($\left\vert \Delta
\left( \alpha ,\beta \right) \right\vert >2$) or the transition curves ($%
\left\vert \Delta \left( \alpha ,\beta \right) \right\vert =2$) of any Hill
equation.

We must notice that the discriminant $\Delta \left( \alpha ,\beta \right) $
defines a manifold $\left( \alpha ,\beta ,\Delta \left( \alpha ,\beta
\right) \right) $ in $%
%TCIMACRO{\U{211d} }%
%BeginExpansion
\mathbb{R}
%EndExpansion
^{3}$ which contains all the stability properties of the periodic
differential equation. It is not so hard to see that the projection of the
intersection between the manifold $\left( \alpha ,\beta ,\Delta \left(
\alpha ,\beta \right) \right) $ and the surfaces $surf_{1}=\left\{ \left(
\alpha ,\beta ,z\right) |\forall \alpha ,\beta \in 
%TCIMACRO{\U{211d} }%
%BeginExpansion
\mathbb{R}
%EndExpansion
\text{, }z=2\right\} $ and $surf_{2}=\left\{ \left( \alpha ,\beta ,z\right)
|\forall \alpha ,\beta \in 
%TCIMACRO{\U{211d} }%
%BeginExpansion
\mathbb{R}
%EndExpansion
\text{, }z=-2\right\} $, in the $\alpha -\beta $ plane, are the transition
curves of a Hill equation, see Fig. 2.%

   \begin{figure}[h]
      \centering
      \includegraphics[trim=20mm 0mm 20mm 25mm,clip,height=1.8in,width=0.5\textwidth]{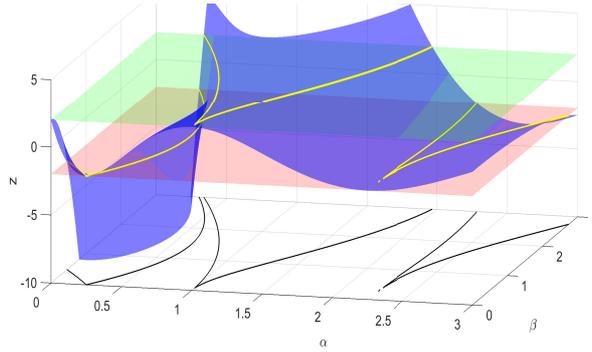}
      \caption{Discriminant approximation manifold in blue, $surf_{1}$ and $surf_{2}$ in red and green respectively, yellow lines represents the intersection between $\Delta\left( \protect\alpha ,\protect\beta \right) $ and $surf_{1}$ or $surf_{2}$, transition curves in black.}
      
   \end{figure}

%\[
%\FRAME{itbpFU}{4.3837in}{2.3661in}{0in}{\Qcb{Fig. 2 Discriminant
%approximation manifold in blue, $surf_{1}$ and $surf_{2}$ in red and green
%respectively, yellow lines represents the intersection between $\Delta
%\left( \protect\alpha ,\protect\beta \right) $ and $surf_{1}$ or $surf_{2}$,
%transition curves in black.}}{}{superficie.eps}{\special{language
%"Scientific Word";type "GRAPHIC";maintain-aspect-ratio TRUE;display
%"USEDEF";valid_file "F";width 4.3837in;height 2.3661in;depth
%0in;original-width 20.0074in;original-height 10.7721in;cropleft "0";croptop
%"1";cropright "1";cropbottom "0";filename
%'Doctorado/Walsh/Superficie.eps';file-properties "XNPEU";}} 
%\]

Figure 3 shows the approximation, for two different approximation orders,
and the actual transition curves of the periodic differential equation 
\begin{equation}
\ddot{x}+(\alpha +\beta \left( \cos \left( t\right) +\cos \left( 2t\right)
\right) )x=0  \label{L}
\end{equation}%
\begin{figure}[h]
\centering
\subfigure[]{\includegraphics[trim=40mm 5mm 40mm 10mm,clip,height=1.4in,width=0.43\textwidth]{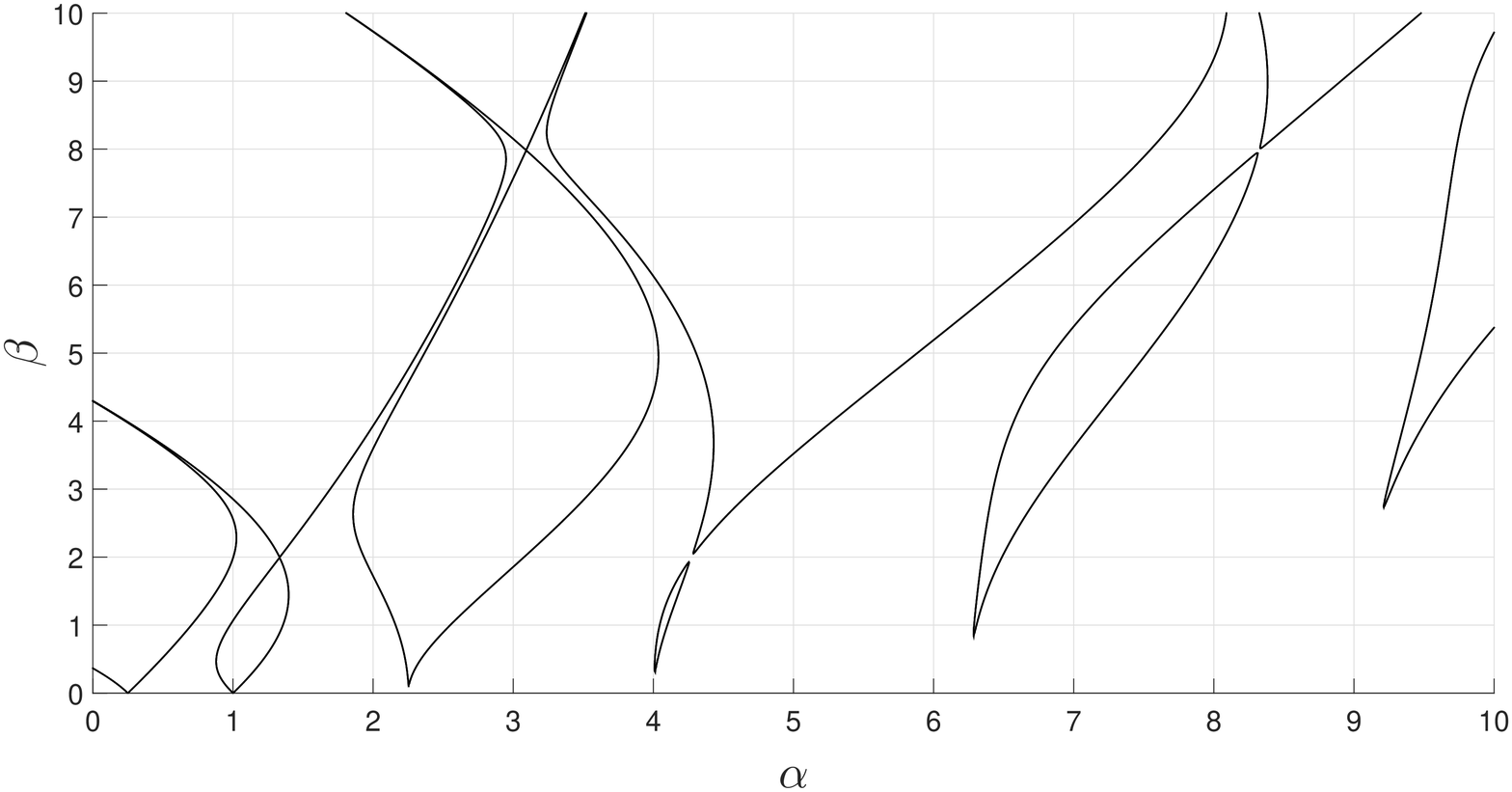}}
\subfigure[]{\includegraphics[trim=30mm 5mm 40mm 10mm,clip,height=1.4in,width=0.43\textwidth]{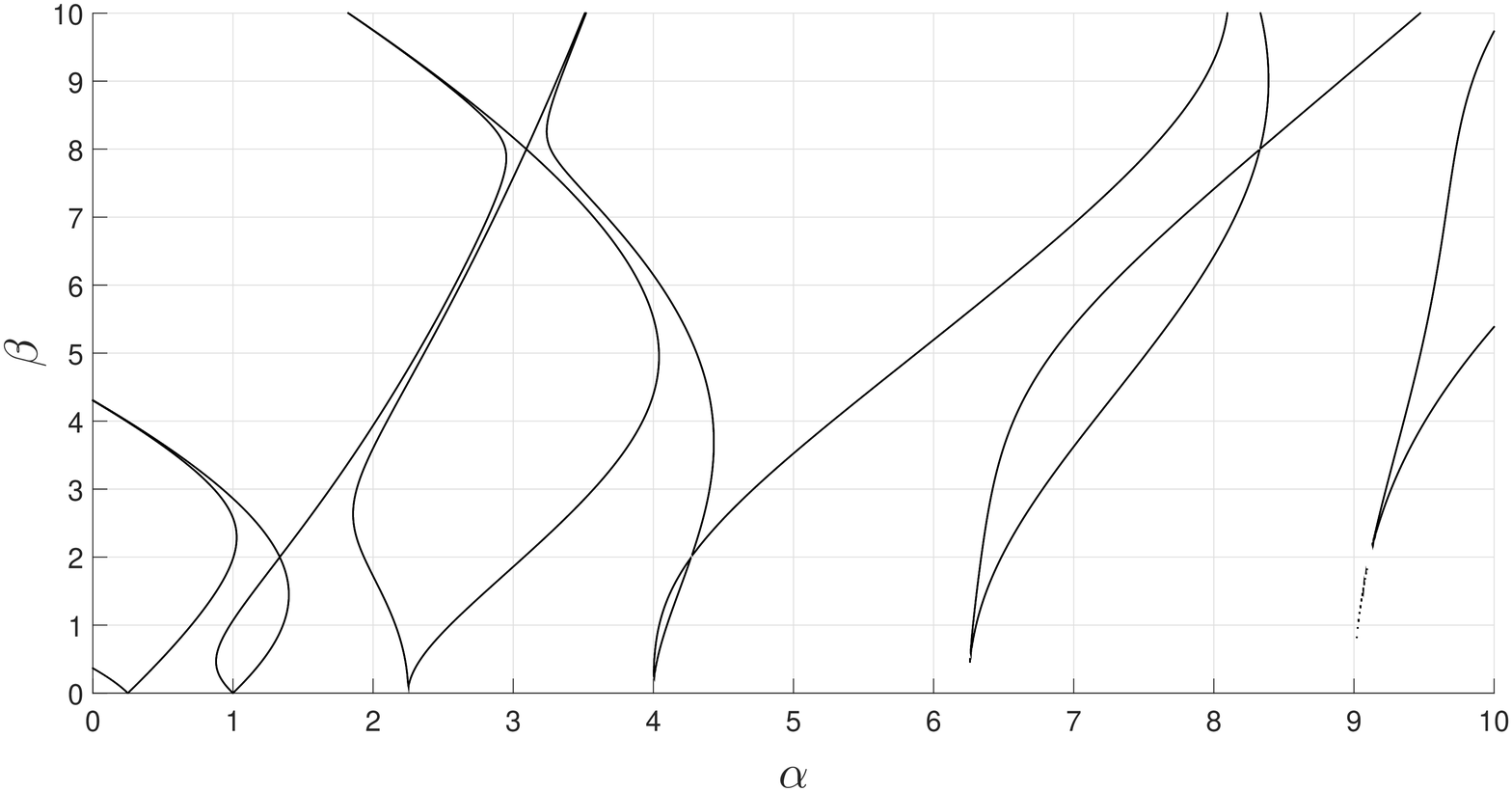}}
 \subfigure[]{\includegraphics[trim=30mm 5mm 40mm 10mm,clip,height=1.4in,width=0.43\textwidth]{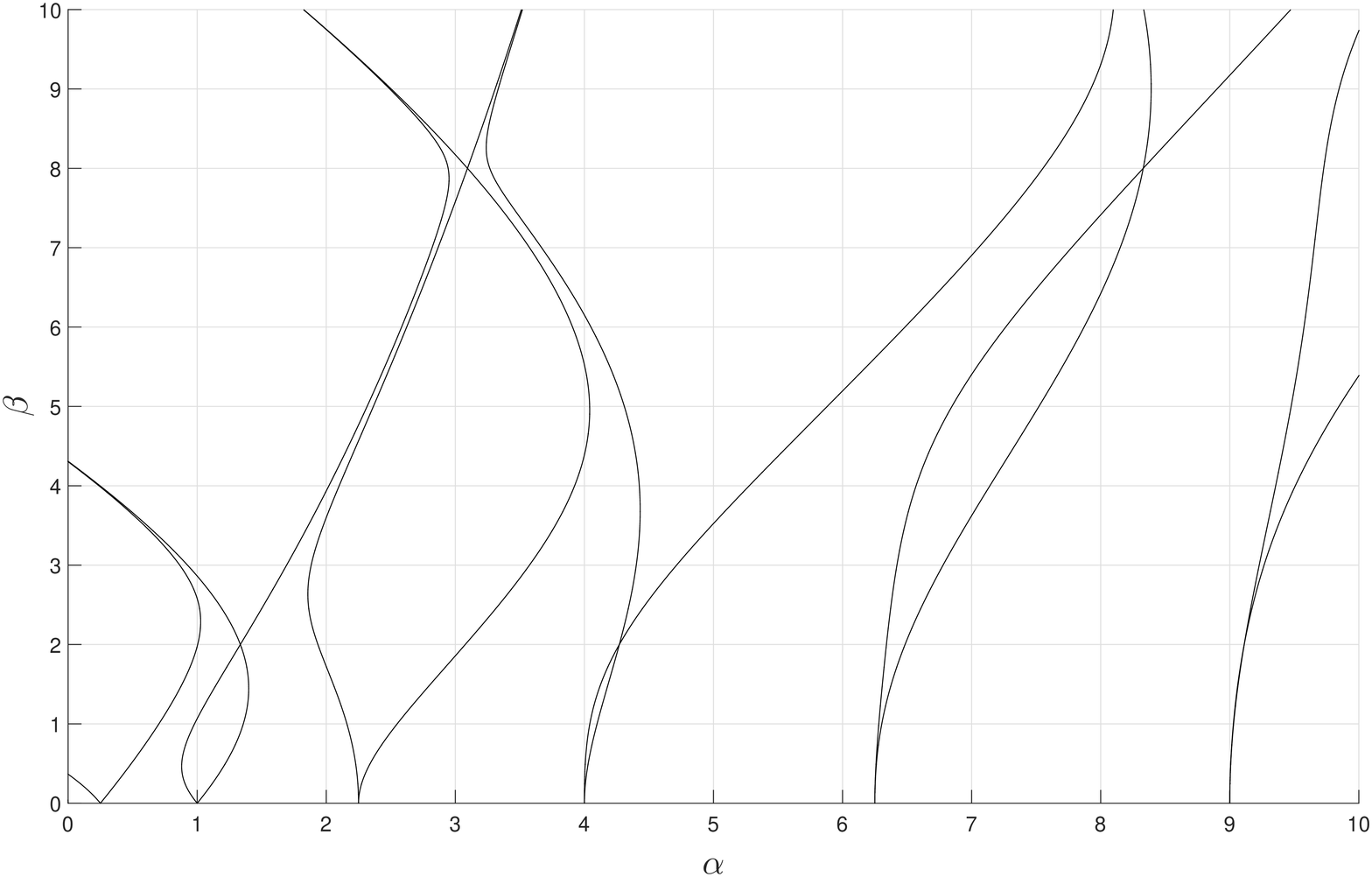}}
\caption{a) Transition curves approximation of order $2^{10}$, b) Transition curves approximation of order $2^{15}$ c) Actual transition curves of (\ref{L}).}
\end{figure}

Notice that, as it was expected, the approximation of the transition curves
of order $2^{15}$ is better than the one with order $2^{10}$, nevertheless, the
latter gives us an idea of the shape of the actual transition
curves.

As a final remark we must say that the approximation here developed, was
obtained thanks to the Walsh function properties. It is true that all
orthogonal series, such as Fourier series, Bessel functions, share some
common properties such as the possibility of obtaining integral or
differential operators or the closeness under the multiplication, that is,
if $f\left( t\right) $ and $g\left( t\right) $ belong to an infinite set of
orthogonal series the product $f\left( t\right) g\left( t\right) $ may be
represented in terms of the infinite set of orthogonal series. But, Walsh
functions have some extra properties, that just a few other sets of orthogonal
series have, for example the multiplication of two Walsh functions belonging
to a finite set may be expressed in terms of the finite set elements.
Another property is the one presented in Lemma \ref{lemma6}, see section 3, which
allows us to post factorize the vector of Walsh function $\bar{w}_{2^{k}}$
given the multiplication $\bar{w}_{2^{k}}\bar{w}_{2^{k}}^{T}\gamma $; this
property made it possible to obtain the state transition matrix
approximation in (\ref{STM}). And, finally the almost orthogonal similarity
of the upper triangular matrices: $\bar{\Lambda}_{\bar{r}}=W_{H}^{-1}\Lambda
_{\bar{r}}W_{H}$ ($p\left( t\right) $ sampling matrix), $\bar{P}=W_{H}PW_{H}$
(integration operator), $\bar{\Gamma}=W_{H}^{-1}\bar{\Gamma}W_{H}$ and $\bar{%
\Gamma}_{P}=W_{H}^{-1}\bar{\Gamma}_{P}W_{H}$ (discriminant sampling
matrices), which is the most fundamental property for the development of the
approximation (\ref{summ2}), see Theorem \ref{theorem11}.

\section{Conclusion}

In section 5 we have given an alternative proof of the discriminant
approximation $\Delta \left( \alpha ,\beta \right) $ made by Lyapunov in 
\cite{Lyapunov} and studied in depth in \cite{LyapunovApp}. This new proof
is based on some properties of Walsh functions and basic definitions and
properties of multiple integrals. In spite of being one of the most known
approximations, the Lyapunov discriminant approximation is very difficult to
obtain since it is necessary to calculate a very large number of multiple
integrals. In this work, the alternative is easily programmed and allows us
to have a computational approach to the discriminant approximation $\Delta
\left( \alpha ,\beta \right) $ to any desired accuracy.

We give a new approximation of $\Delta \left( \alpha ,\beta \right) $, this
is a recursive summation and it only depends on the evaluation of the
excitation function at the time $t_{n}\in \lbrack 0,\tau ]$, $n=0,1,2,\ldots
2^{k}-1$, where $2^k$ is the order of the approximation, see Theorem \ref{theorem11}.
First, the new approximation of $\Delta \left( \alpha ,\beta \right) $ was
obtained by means of Walsh functions, $\Delta \left( \alpha ,\beta \right) $
depended on two large dimensional inverse matrices $\Gamma $ and $\Gamma
_{p} $, see (\ref{GH}), the dependence of $\Delta \left( \alpha ,\beta
\right) $ on Walsh functions was then eliminated by a similarity
transformation of $\Gamma $ and $\Gamma _{p}$, thus the new form of $\Delta
\left( \alpha ,\beta \right) $ depended on the summation of the last column
entries of the two large dimensional triangular inverse matrices $\bar{\Gamma%
}$ and $\bar{\Gamma}_{p}$, see equation (\ref{Dapp2}) on Lemma \ref{lemma8}. Then, the
last column entries of $\bar{\Gamma}$ and $\bar{\Gamma}_{p}$ were calculated
and the recursive summation was obtained.

The new approximation, Theorem \ref{theorem11}, may be seen as a "discrete" form of the
discriminant approximation made by Lyapunov. This approximation is easy to
implement on a computer. The latter is important
because the accuracy of the approximation depends on the number of
recursions, i.e. as $2^{k}\rightarrow \infty $ the accuracy of the method
will be better.
\appendix

\section{Walsh function vector permutation matrix}
Next Lemma states that the columns of the symmetric matrix defined as $%
M_{2^{k}}=\bar{w}_{2^{k}}\bar{w}_{2^{k}}^{T}$ are the permutations of the
vector $\bar{w}_{2^{k}}$. The proof is based on the fact that if $%
w_{n}\left( t\right) $ and $w_{m}\left( t\right) $ belong to a finite set of
Walsh functions $\left\{ w_{0}\left( t\right) ,w_{1}\left( t\right) ,\ldots
,w_{2^{k}-1}\left( t\right) \right\} $ then, the multiplication $w_{n}\left(
t\right) w_{m}\left( t\right) $ belongs to the same finite set to which the
functions $w_{n}\left( t\right) $ and $w_{m}\left( t\right) $ belong. 

\begin{lemma} \label{lemmaA1}
Let $\bar{w}_{2^{k}}\left( t\right) $ be the $2^{k}\times 1$ vector of first 
$2^{k}$ Walsh functions $w_{n}\left( t\right) $, $n=0,1,\ldots ,2^{k}-1$,
and $M_{2^{k}}=\bar{w}_{2^{k}}\bar{w}_{2^{k}}^{T}$ then the columns of the
matrix $M$ are permutations of the entries of the vector $\bar{w}%
_{2^{k}}\left( t\right) $ and it can be written as 
\[
M_{2^{k}}\left( t\right) =\left[ \bar{w}_{2^{k}}\left( t\right) ,\Lambda
_{1}^{\left( 2^{k}\right) }\bar{w}_{2^{k}}\left( t\right) ,...,\Lambda
_{2^{k}-1}^{\left( 2^{k}\right) }\bar{w}_{2^{k}}\left( t\right) \right] 
\]%
where 
\begin{eqnarray*}
\Lambda _{i}^{\left( 2^{k}\right) } &=&\left[ 
\begin{array}{cc}
\Lambda _{i}^{\left( 2^{k}/2\right) } & 0_{\left( 2^{k}/2\right) } \\ 
0_{\left( 2^{k}/2\right) } & \Lambda _{i}^{\left( 2^{k}/2\right) }%
\end{array}%
\right] \\
\Lambda _{i+\left( 2^{k}/2\right) }^{\left( 2^{k}\right) } &=&\left[ 
\begin{array}{cc}
0_{\left( 2^{k}/2\right) } & \Lambda _{i}^{\left( 2^{k}/2\right) } \\ 
\Lambda _{i}^{\left( 2^{k}/2\right) } & 0_{\left( 2^{k}/2\right) }%
\end{array}%
\right] \\
\Lambda _{0}^{\left( 2^{k}\right) } &=&I_{2^{k}} \\
\Lambda _{i}^{\left( 2^{k}\right) } &\in &R^{2^{k}\times 2^{k}}
\end{eqnarray*}
\end{lemma}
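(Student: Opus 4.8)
The plan is to build everything on the single structural fact about the dyadically ordered Walsh functions already recorded in Section~2: on the finite set $\{w_0,\ldots,w_{2^{k}-1}\}$ the functions are closed under pointwise multiplication, with $w_{n}(t)w_{m}(t)=w_{n\oplus m}(t)$, where $\oplus$ is bitwise no-carry addition. First I would use this to write down the entries of $M_{2^{k}}$ explicitly: indexing from $0$, the $(i,j)$ entry is $w_{i}(t)w_{j}(t)=w_{i\oplus j}(t)$, so the $j$-th column of $M_{2^{k}}$ is the list $\left(w_{0\oplus j},w_{1\oplus j},\ldots,w_{(2^{k}-1)\oplus j}\right)^{T}$.

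Next I would observe that for each fixed $j\in\{0,\ldots,2^{k}-1\}$ the map $\pi_{j}\colon i\mapsto i\oplus j$ is a bijection of $\{0,\ldots,2^{k}-1\}$ onto itself; in fact it is an involution, since $\pi_{j}(\pi_{j}(i))=(i\oplus j)\oplus j=i$. Hence the $j$-th column of $M_{2^{k}}$ is a reordering of the entries of $\bar{w}_{2^{k}}$, that is $\Lambda_{j}^{(2^{k})}\bar{w}_{2^{k}}$ for the permutation matrix $\Lambda_{j}^{(2^{k})}$ realizing $\pi_{j}$. Because $\pi_{0}$ is the identity, the first column is $\bar{w}_{2^{k}}$ itself and $\Lambda_{0}^{(2^{k})}=I_{2^{k}}$; because each $\pi_{j}$ is an involution, each $\Lambda_{j}^{(2^{k})}$ is symmetric, which is exactly the fact invoked in the proof of Lemma~\ref{lemma6}.

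It then remains to establish the recursion, and the key is to split indices according to their most significant bit, the one of weight $2^{k-1}$, which is precisely the bit distinguishing the lower half $\{0,\ldots,2^{k-1}-1\}$ from the upper half $\{2^{k-1},\ldots,2^{k}-1\}$. If $j<2^{k-1}$, then that bit of $j$ is $0$, so $\pi_{j}$ leaves the most significant bit of $i$ unchanged and acts on the remaining $k-1$ bits exactly as $\pi_{j}$ does on $\{0,\ldots,2^{k-1}-1\}$; this yields the block-diagonal form with two copies of $\Lambda_{j}^{(2^{k-1})}$. If instead $j=i'+2^{k-1}$ with $i'<2^{k-1}$, then $\pi_{j}$ flips the most significant bit while acting as $\pi_{i'}$ on the low $k-1$ bits, interchanging the two halves and producing the anti-block-diagonal form with $\Lambda_{i'}^{(2^{k-1})}$ off the diagonal. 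These are exactly the two stated recursions, and with the base case $\Lambda_{0}^{(1)}=1$, induction on $k$ closes the argument.

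The main obstacle I anticipate is purely bookkeeping: matching the $0$-based labelling of the Walsh functions to the matrix-entry labelling used elsewhere in the paper, and verifying that ``splitting on the most significant bit'' corresponds to the top/bottom block partition of the stated recursion rather than to an even/odd (least significant bit) interleaving. Once the correct bit is pinned down, the block structure follows immediately from how $\oplus$ acts bitwise, and no Walsh-function computation beyond the multiplication rule is required.
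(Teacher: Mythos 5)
Your proposal is correct, and it is worth noting that it supplies something the paper itself does not: the paper's ``proof'' of Lemma \ref{lemmaA1} is only a citation to an external reference followed by the worked example $M_{4}(t)$ for $k=2$, whereas you give a complete, self-contained argument. Your route rests on exactly the right two observations. First, since $(M_{2^{k}})_{i,j}=w_{i\oplus j}(t)$ and the map $\pi_{j}\colon i\mapsto i\oplus j$ is a bijection (indeed an involution) of $\{0,\ldots,2^{k}-1\}$, each column $j$ is $\Lambda_{j}^{(2^{k})}\bar{w}_{2^{k}}$ with $(\Lambda_{j}^{(2^{k})})_{i,m}=1$ iff $m=i\oplus j$; that convention makes the symmetry of each $\Lambda_{j}^{(2^{k})}$ immediate from $m=i\oplus j\iff i=m\oplus j$, which is precisely what Lemma \ref{lemma6} later invokes. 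Second, your most-significant-bit splitting correctly reproduces the stated recursion: for $j<2^{k-1}$ the top bit of $i\oplus j$ equals that of $i$, so $\pi_{j}$ preserves the two halves and acts on each as $\pi_{j}$ on $k-1$ bits, giving the block-diagonal form; for $j=i'+2^{k-1}$ the top bit flips, giving the anti-block-diagonal form with $\Lambda_{i'}^{(2^{k-1})}$. One can check this against the paper's $k=2$ example, e.g.\ $\Lambda_{3}^{(4)}$ is anti-block-diagonal with $\Lambda_{1}^{(2)}=\left[\begin{smallmatrix}0&1\\ 1&0\end{smallmatrix}\right]$ blocks, matching the fourth column displayed in the appendix. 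Your flagged bookkeeping concern is also resolved favorably: because the paper states the product rule with plain $\oplus$ in its dyadic ordering (and the rule with plain $\oplus$ is preserved under any GF$(2)$-linear reindexing such as the Gray-code map relating dyadic and sequency orderings), the MSB/top-bottom identification is the correct one, and no Walsh-function computation beyond the multiplication rule is needed. In short: what the paper buys by citation, your induction on $k$ buys with about a page of elementary bit arithmetic, making the appendix self-contained.
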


The proof may be found in \cite{Stavroulakis}. For didactic purposes,
let us take $k=2$ and since $w_{n}\left( t\right) w_{m}\left( t\right)
=w_{n\oplus m}\left( t\right) $, where $\oplus $ refers the the no-carry
modulo-2 addition, the matrix $M_{4}$ is

\[
M_{4}\left( t\right) =\left[ 
\begin{array}{cccc}
w_{0}\left( t\right) & w_{1}\left( t\right) & w_{2}\left( t\right) & 
w_{3}\left( t\right) \\ 
w_{1}\left( t\right) & w_{0}\left( t\right) & w_{3}\left( t\right) & 
w_{2}\left( t\right) \\ 
w_{2}\left( t\right) & w_{3}\left( t\right) & w_{0}\left( t\right) & 
w_{1}\left( t\right) \\ 
w_{3}\left( t\right) & w_{2}\left( t\right) & w_{1}\left( t\right) & 
w_{0}\left( t\right)%
\end{array}%
\right] 
\]%
from where it can be seen that the second, third and forth columns of $M_{4}$
are permutations of the entries of the first column, even more the second
column can be rewritten as%
\[
\left[ 
\begin{array}{c}
w_{1}\left( t\right) \\ 
w_{0}\left( t\right) \\ 
w_{3}\left( t\right) \\ 
w_{2}\left( t\right)%
\end{array}%
\right] =\left[ 
\begin{array}{cccc}
0 & 1 & 0 & 0 \\ 
1 & 0 & 0 & 0 \\ 
0 & 0 & 0 & 1 \\ 
0 & 0 & 1 & 0%
\end{array}%
\right] \left[ 
\begin{array}{c}
w_{0}\left( t\right) \\ 
w_{1}\left( t\right) \\ 
w_{2}\left( t\right) \\ 
w_{3}\left( t\right)%
\end{array}%
\right] 
\]%
or simply $\Lambda _{1}\bar{w}_{4}\left( t\right) $, the third column is 
\[
\left[ 
\begin{array}{c}
w_{2}\left( t\right) \\ 
w_{3}\left( t\right) \\ 
w_{0}\left( t\right) \\ 
w_{1}\left( t\right)%
\end{array}%
\right] =\left[ 
\begin{array}{cccc}
0 & 0 & 1 & 0 \\ 
0 & 0 & 0 & 1 \\ 
1 & 0 & 0 & 0 \\ 
0 & 1 & 0 & 0%
\end{array}%
\right] \left[ 
\begin{array}{c}
w_{0}\left( t\right) \\ 
w_{1}\left( t\right) \\ 
w_{2}\left( t\right) \\ 
w_{3}\left( t\right)%
\end{array}%
\right] 
\]%
or simply $\Lambda _{2}\bar{w}_{4}\left( t\right) $, and the fourth column is%
\[
\left[ 
\begin{array}{c}
w_{3}\left( t\right) \\ 
w_{2}\left( t\right) \\ 
w_{1}\left( t\right) \\ 
w_{0}\left( t\right)%
\end{array}%
\right] =\left[ 
\begin{array}{cccc}
0 & 0 & 0 & 1 \\ 
0 & 0 & 1 & 0 \\ 
0 & 1 & 0 & 0 \\ 
1 & 0 & 0 & 0%
\end{array}%
\right] \left[ 
\begin{array}{c}
w_{0}\left( t\right) \\ 
w_{1}\left( t\right) \\ 
w_{2}\left( t\right) \\ 
w_{3}\left( t\right)%
\end{array}%
\right] 
\]%
or simply $\Lambda _{3}\bar{w}_{4}\left( t\right) $, so 
\[
M_{4}\left( t\right) =\left[ \bar{w}_{4}\left( t\right) ,\Lambda _{1}\bar{w}%
_{4}\left( t\right) ,\Lambda _{2}\bar{w}_{4}\left( t\right) ,\Lambda _{3}%
\bar{w}_{4}\left( t\right) \right] 
\]%
which is consistent with the above mentioned lemma.

\section{Obtaining the last column of an upper triangular inverse matrix }
In this part, the last column entries of a non-singular upper triangular
matrix, are obtained as a recursive summation.

Let $U$ be the $n\times n$ real non-singular matrix 
\begin{equation}
U=\left[ 
\begin{array}{cccccc}
u_{1,1} & u_{1,2} & \cdots  & u_{1,n-2} & u_{1,n-1} & u_{1,n} \\ 
0 & u_{22} & \cdots  & u_{2,n-2} & u_{2,n-1} & u_{2,n} \\ 
\vdots  & \vdots  & \ddots  & \vdots  & \vdots  & \vdots  \\ 
0 & 0 & \cdots  & u_{n-2,n-2} & u_{n-2,n-1} & u_{n-2,n} \\ 
0 & 0 & \cdots  & 0 & u_{n-1,n-1} & u_{n-1,n} \\ 
0 & 0 & \cdots  & 0 & 0 & u_{n,n}%
\end{array}%
\right]   \label{U}
\end{equation}%
and define its inverse as 
\begin{equation}
U^{-1}=\left[ 
\begin{array}{cccccc}
\ast  & \ast  & \cdots  & \ast  & \ast  & a_{n-1} \\ 
0 & \ast  & \cdots  & \ast  & \ast  & a_{n-2} \\ 
\vdots  & \vdots  & \ddots  & \vdots  & \vdots  & \vdots  \\ 
0 & 0 & \cdots  & \ast  & \ast  & a_{2} \\ 
0 & 0 & \cdots  & 0 & \ast  & a_{1} \\ 
0 & 0 & \cdots  & 0 & 0 & a_{0}%
\end{array}%
\right]   \label{Ui}
\end{equation}%
the last column entries of the inverse of $U$, i.e. the coefficients $%
a_{\ell }$, $\ell =0,1,2,\ldots ,n-1$, may be calculated as follows

\begin{lemma} \label{lemmaA2}
Let $U$ be defined as in (\ref{U}), then the last column entries of $U^{-1}$
are%
\begin{eqnarray}
a_{0} &=&\frac{1}{u_{n,n}} \\
a_{k} &=&\sum_{j=0}^{k-1}-a_{j}\frac{u_{n-k,n-j}}{u_{n-k,n-k}}  \nonumber \\
k &=&1,2,\ldots n  \nonumber
\end{eqnarray}%
the coefficients $a_{k}$ are defined as in (\ref{Ui}).
\end{lemma}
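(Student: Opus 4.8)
The plan is to use nothing more than the single identity $U U^{-1}=I_{n}$, restricted to its last column. Writing $v=(v_{1},\ldots,v_{n})^{T}$ for the last column of $U^{-1}$, the last column of the product equation reads $Uv=e_{n}$, where $e_{n}$ is the vector with a $1$ in its final entry and zeros elsewhere. Since $U$ is upper triangular, this one vector equation can be solved by back-substitution starting from the bottom row and moving upward, and that back-substitution \emph{is} the asserted recursion. Before doing anything else I would record that non-singularity of $U$ forces every diagonal entry $u_{i,i}$ to be non-zero, so each division appearing below is legitimate.

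The first step is a change of names to match the indexing in (\ref{Ui}): the entry sitting in row $n-\ell$ of the last column is $a_{\ell}$, so $v_{n-\ell}=a_{\ell}$, equivalently $v_{j}=a_{n-j}$. Expanding the $i$-th component of $Uv=e_{n}$ and using $u_{i,j}=0$ for $j<i$ gives $\sum_{j=i}^{n}u_{i,j}v_{j}=(e_{n})_{i}$. The bottom row $i=n$ contains only the diagonal term, namely $u_{n,n}v_{n}=1$, which yields the base case $a_{0}=v_{n}=1/u_{n,n}$.

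For a general row $i=n-k$ with $k\ge 1$ the right-hand side is $0$; isolating the diagonal term and transposing the rest gives $u_{n-k,n-k}\,a_{k}=-\sum_{j=n-k+1}^{n}u_{n-k,j}\,v_{j}$. Reindexing the sum by $j=n-\ell$, so that $\ell$ runs from $0$ to $k-1$, converts $v_{j}$ into $a_{\ell}$ and $u_{n-k,j}$ into $u_{n-k,n-\ell}$; dividing by the non-zero diagonal entry then produces exactly $a_{k}=\sum_{\ell=0}^{k-1}-a_{\ell}\,u_{n-k,n-\ell}/u_{n-k,n-k}$, which is the claimed formula. Because the right-hand side references only $a_{0},\ldots,a_{k-1}$, the recursion is well-posed and the coefficients are determined in order $a_{0},a_{1},\ldots,a_{n-1}$. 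I expect no genuine difficulty here: the one thing to watch is the index bookkeeping, keeping the row index $n-k$, the column index $n-\ell$, and the subscript on $a$ aligned, and that is the only place an error could creep in.
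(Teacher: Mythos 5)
Your proof is correct, and it takes a genuinely different route from the paper. The paper starts from the adjugate formula $U^{-1}=\frac{1}{\det(U)}\,\mathrm{adj}(U)$, notes $\det(U)=\prod_{i=1}^{n}u_{i,i}\neq 0$, computes the cofactor entries $a_{0},a_{1},a_{2}$ of the last column explicitly as ratios of minors, and then reads off the recursive pattern for general $a_{i}$, with the general step indicated by ellipsis rather than derived. You instead solve the single linear system $Uv=e_{n}$ (the last column of $UU^{-1}=I_{n}$) by back-substitution: the bottom row gives the base case $a_{0}=1/u_{n,n}$, and row $n-k$ gives
\[
u_{n-k,n-k}\,a_{k}=-\sum_{\ell=0}^{k-1}u_{n-k,n-\ell}\,a_{\ell},
\]
which after division by the (non-zero) diagonal entry is exactly the claimed recursion, valid for every $k$ at once. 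Your index bookkeeping $v_{n-\ell}=a_{\ell}$ checks out against (\ref{Ui}), and your observation that non-singularity of a triangular matrix forces all $u_{i,i}\neq 0$ justifies every division. What your route buys is a uniform derivation of the general term (an honest induction hidden in the back-substitution order $a_{0},a_{1},\ldots,a_{n-1}$) with no determinant machinery at all; what the paper's route buys is an explicit closed-form expression of each $a_{i}$ as a cofactor quotient, which makes the link to minors of $U$ visible but leaves the general recursion as an extrapolated pattern. One incidental benefit of your derivation: it makes plain that the index range in the statement should be $k=1,\ldots,n-1$ (the last column of $U^{-1}$ has entries $a_{0},\ldots,a_{n-1}$ only), and it sidesteps the sign/index slips in the paper's displayed cofactor expressions (e.g.\ the column subscripts $n-i-1$, $n-i-2$ there should read $n-i+1$, $n-i+2$), neither of which affects the truth of the lemma.
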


\begin{proof}
It is known that the inverse of any non-singular matrix $R$ is $R^{-1}=\frac{%
1}{\det \left( R\right) }adj\left( R\right) $, the non-singularity of $U$
guarantees that $\det \left( U\right) \not=0$ moreover $\det \left( U\right)
=\prod_{i=1}^{n}u_{i,i}$. By direct calculation, the $n,n$ entry of $U^{-1}$
is%
\begin{eqnarray*}
a_{0} &\triangleq &\frac{1}{\det \left( U\right) }adj\left( U\right) _{n,n}=%
\frac{1}{u_{n,n}} \\
a_{1} &\triangleq &\frac{1}{\det \left( U\right) }adj\left( U\right)
_{n-1,n}=\frac{u_{n-1,n}}{u_{n,n}u_{n-1,n-1}}=-a_{0}\frac{u_{n-1,n}}{%
u_{n-1,n-1}} \\
a_{2} &\triangleq &\frac{1}{\det \left( U\right) }adj\left( U\right)
_{n-2,n}=\frac{u_{n-2,n-1}u_{n-1,n}-u_{n-2,n}u_{n-1,n-1}}{%
u_{n-2,n-2}u_{n-1,n-1}u_{n,n}}=-a_{1}\frac{u_{n-2,n-1}}{u_{n-2,n-2}}-a_{0}%
\frac{u_{n-2,n}}{u_{n-2,n-2}} \\
&&\vdots \\
a_{i} &\triangleq &\frac{1}{\det \left( U\right) }adj\left( U\right)
_{n-i,n}=-a_{i-1}\frac{u_{n-i,n-i-1}}{u_{n-i,n-i}}-a_{i-2}\frac{u_{n-i,n-i-2}%
}{u_{n-i,n-i}}-\cdots -a_{1}\frac{u_{n-i,n-1}}{u_{n-i,n-i}}-a_{0}\frac{%
u_{n-i,n}}{u_{n-i,n-i}}
\end{eqnarray*}%
and the lemma follows.
\end{proof}

\section{Proof of the second part of the Theorem \ref{theorem11}} 

In order to obtain the coefficients $c_{n}$ of Theorem \ref{theorem11} we must remember
that each coefficient $c_{n}$, $n=0,1,...2^{k}-1$, is the $2^{k}-n$ entry of
the last column of $\bar{\Gamma}_{P}\left( \tau \right) $ and doing almost
the same as for coefficients $b_{n}$ we have%
\begin{eqnarray*}
c_{0} &=&\frac{2^{2k+2}}{2^{2k+2}+\tau ^{2}\left( \alpha +\beta
p_{2^{k}}\right) } \\
c_{1} &=&-4\tau ^{2}\frac{\alpha +\frac{\beta }{2}\left(
p_{2^{k}-1}+p_{2^{k}}\right) }{2^{2k+2}+\tau ^{2}\left( \alpha +\beta
p_{2^{k}-1}\right) }c_{0} \\
c_{2} &=&4\tau ^{2}\frac{-c_{0}\left( 2\alpha +\beta \left( \frac{1}{2}%
p_{2^{k}-2}+p_{2^{k}-1}+\frac{p_{2^{k}}}{2}\right) \right) -c_{1}\left(
\alpha +\frac{\beta }{2}\left( p_{2^{k}-2}+p_{2^{k}-1}\right) \right) }{%
2^{2k+2}+\tau ^{2}\left( \alpha +\beta p_{2^{k}-2}\right) } \\
&&\vdots  \\
c_{n} &=&\frac{4\tau ^{2}}{2^{2k+2}+\tau ^{2}\left( \alpha +\beta
p_{2^{k}-n}\right) }\left( -\sum\limits_{i=0}^{n-1}\left( c_{i}\left( \left(
n-i\right) \alpha -\frac{1}{2}\left( p_{2^{k}-n}+p_{2^{k}-i}\right)
+\sum_{j=i}^{n}p_{2^{k}-j}\right) \right) \right) 
\end{eqnarray*}%
if we define 
\[
\mu _{h}=\alpha +\frac{\beta }{2}\left( p_{2^{k}-h}+p_{2^{k}-h+1}\right) 
\]%
then the coefficients $c_{n}$ become%
\begin{eqnarray*}
c_{0} &=&\frac{2^{2k+2}}{2^{2k+2}+\tau ^{2}\left( \alpha +\beta
p_{2^{k}}\right) } \\
c_{n} &=&-\psi _{n}\sum\limits_{i=0}^{n-1}\left( c_{i}\sum_{j=i+1}^{n}\mu
_{j}\right) 
\end{eqnarray*}%
for $n=1,2,\ldots 2^{k}-1$, $\psi _{h}$ is defined as in (\ref{Psih}).

\section{} 

In this part two Lemmas are stated: the first one says that the recurrent
summations (\ref{A41}) can be written in terms of powers of the real
constant $\delta $; and the second one gives us a rewriting of the multiple
integrals shown in (\ref{re1}). The proof of the former is done by following
the formulas (\ref{A41}), and the proof of the latter is done by rearranging
the limits and integration variables. Both Lemmas are used in the proof of
Theorem \ref{theorem13}. 

\begin{lemma} \label{lemmaA41}
Let 
\begin{eqnarray}
A_{0} &=&B_{0}=1  \nonumber \\
A_{n} &=&1-\delta \sum_{i=0}^{n-1}\left[ A_{i}\mathbb{I}_{n+1,i+1}\right] 
\label{A41} \\
B_{n} &=&1-\delta \left[ \sum_{i=0}^{n-1}\left( n-i\right) B_{i}\mathbb{I}%
_{i+2,i+1}\right]   \nonumber
\end{eqnarray}%
where $\mathbb{I}_{j,\ell }$ and $\delta $ are real constants, then $A_{n}$
and $B_{n}$ can be rewritten in terms of powers of $\delta $ as 
\begin{eqnarray*}
A_{n} &=&1-\delta \sum_{i=1}^{n}\mathbb{I}_{n+1,i}+\delta ^{2}\sum_{i=2}^{n}%
\mathbb{I}_{n+1,i}\sum_{j=1}^{i-1}\mathbb{I}_{i,j}-\delta ^{3}\sum_{i=3}^{n}%
\mathbb{I}_{n+1,i}\sum_{j=1}^{i-1}\mathbb{I}_{i,j}\sum_{l=1}^{j}\mathbb{I}%
_{j,l}+\ldots  \\
&&%
\begin{tabular}{l}
$\ \ \ \ \ \ \ \ \ \ \ \ \ \ \ \ \ \ \ \ \ \ \ \ \ \ \ \ \ \ \ \ \ \ \ \ \ \
\ \ \  \ \ \ \ldots +\left( -1\right) ^{\bar{n}}\delta ^{%
\bar{n}}\sum_{i=\bar{n}}^{n}\mathbb{I}_{n+1,i}\sum_{j=1}^{i-1}\mathbb{I}%
_{i,j}\sum_{l=1}^{j}\mathbb{I}_{j,l}\ldots \sum_{y=1}^{x}\mathbb{I}_{x,y}$%
\end{tabular}
\\
B_{n} &=&1-\delta \sum_{i=2}^{n+1}\mathbb{I}_{i,1}+\delta ^{2}\sum_{i=2}^{n}%
\mathbb{I}_{i,1}\sum_{j=i+1}^{n+1}\mathbb{I}_{j,i}-\delta
^{3}\sum_{i=2}^{n-1}\mathbb{I}_{i,1}\sum_{j=i+1}^{n}\mathbb{I}%
_{j,i}\sum_{l=j+1}^{n+1}\mathbb{I}_{l,j}+\ldots  \\
&&%
\begin{tabular}{l}
$\ \ \ \ \ \ \ \ \ \ \ \ \ \ \ \ \ \ \ \ \ \ \ \ \ \ \ \ \ \ \ \ \ \ \ \ \ \
\ \ \ \ \ \ \ \ \ \ \ldots +\left( -1\right) ^{\bar{%
n}}\delta ^{\bar{n}}\sum_{i=2}^{n+2-\bar{n}}\mathbb{I}_{i,1}%
\sum_{j=i+1}^{n+3-\bar{n}}\mathbb{I}_{j,i}\ldots \sum_{y=x+1}^{n+1}\mathbb{I}%
_{y,x}$%
\end{tabular}%
\end{eqnarray*}
\end{lemma}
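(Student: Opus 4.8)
The plan is to prove the two expansions separately, in each case by strong induction on $n$. Since the recursions (\ref{A41}) together with $A_{0}=B_{0}=1$ determine the sequences $A_{n}$ and $B_{n}$ uniquely, it suffices to insert the proposed closed forms into the right-hand sides of (\ref{A41}) and check that the same expressions reappear on the left, matching the coefficient of each power of $\delta$.

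For $A_{n}$ I would first read the proposed expansion as a sum over strictly descending index chains: the coefficient of $(-1)^{k}\delta^{k}$ is $\sum \mathbb{I}_{n+1,i_{1}}\mathbb{I}_{i_{1},i_{2}}\cdots \mathbb{I}_{i_{k-1},i_{k}}$ over all $n+1>i_{1}>i_{2}>\cdots >i_{k}\geq 1$. Writing each $A_{i}$ in this chain form, whose top index is $i+1$, and substituting into $A_{n}=1-\delta \sum_{i=0}^{n-1}A_{i}\mathbb{I}_{n+1,i+1}$, the factor $\mathbb{I}_{n+1,i+1}$ simply prepends a new largest index $i_{1}=i+1$ to every chain counted by $A_{i}$; as $i$ runs over $0,\ldots ,n-1$ this index runs over $1,\ldots ,n$, and one checks that every admissible chain for $A_{n}$ is produced exactly once. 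Matching powers of $\delta$ then closes this induction. This part is purely formal and uses no property of the numbers $\mathbb{I}_{j,\ell }$ beyond their being numbers.

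The identity for $B_{n}$ is the harder one, for two reasons: the recursion carries the weight $(n-i)$, and it involves only the single-step quantities $\mathbb{I}_{i+2,i+1}$, whereas the proposed form is built from long-range quantities $\mathbb{I}_{i_{1},1},\mathbb{I}_{i_{2},i_{1}},\ldots $ along ascending chains $1<i_{1}<\cdots <i_{k}\leq n+1$. The bridge is the additivity $\mathbb{I}_{a,c}=\mathbb{I}_{a,b}+\mathbb{I}_{b,c}$ of the quantities $\mathbb{I}_{j,\ell}=\int_{\tau -j\delta }^{\tau -\ell \delta }q$ used in the proof of Theorem \ref{theorem13}, equivalently the telescoping $\sum_{m=c+1}^{a}\mathbb{I}_{m,m-1}=\mathbb{I}_{a,c}$; I would flag at the outset that for genuinely arbitrary constants the two forms already disagree at first order, so this additivity is essential and not merely cosmetic. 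Concretely, the key sublemma I would isolate first is $\sum_{i=0}^{\ell -1}\mathbb{I}_{i+2,i+1}=\mathbb{I}_{\ell +1,1}$, which already explains the first-order term since $\sum_{i=0}^{n-1}(n-i)\mathbb{I}_{i+2,i+1}=\sum_{\ell =1}^{n}\sum_{i=0}^{\ell -1}\mathbb{I}_{i+2,i+1}=\sum_{\ell =2}^{n+1}\mathbb{I}_{\ell ,1}$ after writing $(n-i)=\sum_{\ell =i+1}^{n}1$ and swapping the order of summation.

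The step I expect to be the main obstacle is the bookkeeping for $B_{n}$ at higher powers of $\delta$: I must verify that the compounded multiplicities generated by the repeated $(n-i)$ factors, after reordering the nested finite sums and telescoping, reproduce exactly the ascending-chain coefficients of the claimed expansion, including the way the upper summation limits contract as the $\delta$-power grows. My plan is to use the sublemma above as the engine of the inductive step, so that insertion of the closed form for $B_{i}$ followed by one application of the telescoping identity extends every ascending chain by a single long-range factor $\mathbb{I}_{j,i}$ reaching the new top index $n+1$; matching the $\delta^{0}$ terms and the recursively generated higher-order terms then completes the induction for $B_{n}$ as well.
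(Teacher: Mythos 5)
Your proposal is correct, and it takes a genuinely different --- and more rigorous --- route than the paper. The paper's own proof simply computes $A_{1},\ldots ,A_{4}$ and $B_{1},\ldots ,B_{3}$ from the recursion, groups powers of $\delta $, and extrapolates the pattern (``and the lemma follows''); your strong induction over index chains (descending chains $n+1>i_{1}>\cdots >i_{k}\geq 1$ for $A_{n}$, ascending chains $1<i_{1}<\cdots <i_{k}\leq n+1$ for $B_{n}$) actually establishes the general term. More importantly, your flag about additivity is a genuine correction to the statement as written: the lemma hypothesizes only that the $\mathbb{I}_{j,\ell }$ are arbitrary real constants, yet the recursion gives the first-order term of $B_{2}$ as $2\mathbb{I}_{2,1}+\mathbb{I}_{3,2}$, which equals the claimed $\mathbb{I}_{2,1}+\mathbb{I}_{3,1}$ only if $\mathbb{I}_{3,1}=\mathbb{I}_{3,2}+\mathbb{I}_{2,1}$; the paper's computation of $B_{2}$ uses this silently, so the $B$-half of the lemma holds only for the integral-valued $\mathbb{I}_{j,\ell }=\int_{\tau -j\delta }^{\tau -\ell \delta }q$ of Theorem \ref{theorem13}, exactly as you observe. (Your strict-chain reading of the $A_{n}$ expansion also quietly repairs the paper's inner limits such as $\sum_{l=1}^{j}$, which admit factors $\mathbb{I}_{j,j}$ that vanish only in the integral interpretation.) Finally, the bookkeeping you defer for $B_{n}$ does close with the sublemma you isolated: writing $C_{k}(i)$ for the sum over ascending chains $1<j_{1}<\cdots <j_{k}\leq i+1$ of $\mathbb{I}_{j_{1},1}\mathbb{I}_{j_{2},j_{1}}\cdots \mathbb{I}_{j_{k},j_{k-1}}$ and substituting the inductive closed form into the recursion, the coefficient of $\left( -1\right) ^{k+1}\delta ^{k+1}$ in $B_{n}$ becomes, after the reindexing $m=i+1$,
\begin{equation*}
\sum_{i=0}^{n-1}\left( n-i\right) \mathbb{I}_{i+2,i+1}\,C_{k}\left( i\right)
=\sum_{1<j_{1}<\cdots <j_{k}\leq n}\mathbb{I}_{j_{1},1}\cdots \mathbb{I}
_{j_{k},j_{k-1}}\sum_{m=j_{k}}^{n}\left( n+1-m\right) \mathbb{I}_{m+1,m},
\end{equation*}
and your telescoping identity gives $\sum_{m=j_{k}}^{n}\left( n+1-m\right) \mathbb{I}_{m+1,m}=\sum_{j_{k+1}=j_{k}+1}^{n+1}\mathbb{I}_{j_{k+1},j_{k}}$, since the weight $n+1-m$ counts exactly the tops $j_{k+1}\in \left\{ m+1,\ldots ,n+1\right\} $ lying above the single step $\mathbb{I}_{m+1,m}$; hence the right-hand side is exactly $C_{k+1}\left( n\right) $ and the induction closes. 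In short: what your route buys is a complete proof valid for all $n$ together with an explicit identification of the hypothesis the paper omits; what the paper's route buys is only brevity.
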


\begin{proof}
Following the formula for $S_{n}$ and grouping terms of powers of $\delta $
one obtains%
\begin{eqnarray*}
A_{1} &=&1-\delta \mathbb{I}_{2,1} \\
A_{2} &=&1-\delta \left( \mathbb{I}_{3,1}+A_{1}\mathbb{I}_{3,2}\right)
=1-\delta \left( \mathbb{I}_{3,1}+\mathbb{I}_{3,2}\right) +\delta ^{2}%
\mathbb{I}_{2,1}\mathbb{I}_{3,2} \\
A_{3} &=&1-\delta \left( \mathbb{I}_{4,1}+\mathbb{I}_{4,2}+\mathbb{I}%
_{4,3}\right) +\delta ^{2}\left( \mathbb{I}_{2,1}\mathbb{I}_{4,2}+\left( 
\mathbb{I}_{3,1}+\mathbb{I}_{3,2}\right) \mathbb{I}_{4,3}\right) -\delta ^{3}%
\mathbb{I}_{2,1}\mathbb{I}_{3,2}\mathbb{I}_{4,3} \\
A_{4} &=&1-\delta \left( \mathbb{I}_{5,1}+A_{1}\mathbb{I}_{5,2}+A_{2}\mathbb{%
I}_{5,3}+A_{3}\mathbb{I}_{5,4}\right) \\
&=&1-\delta \left( \mathbb{I}_{5,1}+\mathbb{I}_{5,2}+\mathbb{I}_{5,3}+\mathbb{%
I}_{5,4}\right) +\delta ^{2}\left( \mathbb{I}_{2,1}\mathbb{I}_{5,2}+\left( 
\mathbb{I}_{3,1}+\mathbb{I}_{3,2}\right) \mathbb{I}_{5,3}+\left( \mathbb{I}%
_{4,1}+\mathbb{I}_{4,2}+\mathbb{I}_{4,3}\right) \mathbb{I}_{5,4}\right) - \\
&&-\delta ^{3}\left( \mathbb{I}_{2,1}\mathbb{I}_{3,2}\mathbb{I}_{5,3}+\left( 
\mathbb{I}_{2,1}\mathbb{I}_{4,2}+\left( \mathbb{I}_{3,1}+\mathbb{I}%
_{3,2}\right) \mathbb{I}_{4,3}\right) \mathbb{I}_{5,4}\right) +\delta ^{4}%
\mathbb{I}_{2,1}\mathbb{I}_{3,2}\mathbb{I}_{4,3}\mathbb{I}_{5,4} \\
&&\vdots \\
A_{n}&=&1-\delta \sum_{i=1}^{n}\mathbb{I}_{n+1,i}+\delta ^{2}\sum_{i=2}^{n}%
\mathbb{I}_{n+1,i}\sum_{j=1}^{i-1}\mathbb{I}_{i,j}-\delta ^{3}\sum_{i=3}^{n}%
\mathbb{I}_{n+1,i}\sum_{j=1}^{i-1}\mathbb{I}_{i,j}\sum_{l=1}^{j}\mathbb{I}%
_{j,l}+\ldots \\
&& \qquad\qquad\qquad\qquad\qquad\qquad\qquad\qquad\qquad\qquad \ldots+\left( -1\right) ^{\bar{n}}\delta ^{\bar{n}}\sum_{i=\bar{n}%
}^{n}\mathbb{I}_{n+1,i}\sum_{j=1}^{i-1}\mathbb{I}_{i,j}\sum_{l=1}^{j}\mathbb{%
I}_{j,l}\ldots \sum_{y=1}^{x}\mathbb{I}_{x,y} 
\end{eqnarray*}
doing a similar procedure for $B_{n}$ one gets%
\begin{eqnarray*}
B_{0} &=&1 \\
B_{1} &=&1-\delta \mathbb{I}_{2,1} \\
B_{2} &=&1-\delta \left( \mathbb{I}_{3,1}+\mathbb{I}_{2,1}\right) +\delta
^{2}\mathbb{I}_{2,1}\mathbb{I}_{3,2} \\
B_{3} &=&1-\delta \left( \mathbb{I}_{4,1}+\mathbb{I}_{3,1}+\mathbb{I}%
_{2,1}\right) +\delta ^{2}\left( \mathbb{I}_{2,1}\left( \mathbb{I}_{4,2}+%
\mathbb{I}_{3,2}\right) +\mathbb{I}_{3,1}\mathbb{I}_{4,3}\right) -\delta ^{3}%
\mathbb{I}_{2,1}\mathbb{I}_{3,2}\mathbb{I}_{4,3} \\
&&\vdots \\
B_{n}&=&1-\delta \sum_{i=2}^{n+1}\mathbb{I}_{i,1}+\delta ^{2}\sum_{i=2}^{n}%
\mathbb{I}_{i,1}\sum_{j=i+1}^{n+1}\mathbb{I}_{j,i}-\delta
^{3}\sum_{i=2}^{n-1}\mathbb{I}_{i,1}\sum_{j=i+1}^{n}\mathbb{I}%
_{j,i}\sum_{l=j+1}^{n+1}\mathbb{I}_{l,j}+\ldots \\
&&\qquad\qquad\qquad\qquad\qquad\qquad\qquad\qquad\qquad\qquad\qquad\ldots+\left( -1\right) ^{\bar{n}%
}\delta ^{\bar{n}}\sum_{i=2}^{n+2-\bar{n}}\mathbb{I}_{i,1}\sum_{j=i+1}^{n+3-%
\bar{n}}\mathbb{I}_{j,i}\ldots \sum_{y=x+1}^{n+1}\mathbb{I}_{y,x} 
\end{eqnarray*}
and the lemma follows.
\end{proof}

\bigskip

\begin{lemma} \label{lemmaA42}
Let the multiple definite integral sequence \ 
\begin{eqnarray}
A_{1} &=&\int_{0}^{\tau }\int_{0}^{t_{1}}q\left( t_{2}\right)
dt_{2}dt_{1}+\int_{0}^{\tau }\int_{t_{1}}^{\tau }q\left( t_{2}\right)
dt_{2}dt_{1}  \nonumber \\
A_{2} &=&\int_{0}^{\tau }\int_{0}^{t_{1}}\int_{t_{1}}^{\tau
}\int_{t_{1}}^{t_{3}}q\left( t_{2}\right) q\left( t_{4}\right)
dt_{4}dt_{3}dt_{2}dt_{1}+\int_{0}^{\tau }\int_{t_{1}}^{\tau
}\int_{0}^{t_{1}}\int_{t_{3}}^{t_{1}}q\left( t_{2}\right) q\left(
t_{4}\right) dt_{4}dt_{3}dt_{2}dt_{1}  \nonumber 
\end{eqnarray}%
\begin{eqnarray}
A_{3} &=&\int_{0}^{\tau }\int_{0}^{t_{1}}\int_{t_{1}}^{\tau
}\int_{t_{1}}^{t_{3}}\int_{t_{3}}^{\tau }\int_{t_{3}}^{t_{5}}q\left(
t_{2}\right) q\left( t_{4}\right) q\left( t_{6}\right)
dt_{6}dt_{5}dt_{4}dt_{3}dt_{2}dt_{1} + \label{re1} \\
&&\qquad\qquad\qquad\qquad+\int_{0}^{\tau }\int_{t_{1}}^{\tau
}\int_{0}^{t_{1}}\int_{t_{3}}^{t_{1}}\int_{0}^{t_{3}}\int_{t_{5}}^{t_{3}}q%
\left( t_{2}\right) q\left( t_{4}\right) q\left( t_{6}\right)
dt_{6}dt_{5}dt_{4}dt_{3}dt_{2}dt_{1}  \nonumber
\end{eqnarray}%
and the general term 
\begin{eqnarray}
A_{\bar{n}} &=&\int_{0}^{\tau }\int_{0}^{t_{1}}\int_{t_{1}}^{\tau
}\int_{t_{1}}^{t_{3}}\ldots \int_{t_{2\bar{n}-3}}^{\tau }\int_{t_{2\bar{n}%
-3}}^{t_{2\bar{n}-1}}q\left( t_{2}\right) \ldots q\left( t_{2\bar{n}%
-2}\right) q\left( t_{2\bar{n}}\right) dt_{2\bar{n}}dt_{2\bar{n}-1}\ldots
dt_{4}dt_{3}dt_{2}dt_{1}+  \nonumber \\
&&+\int_{0}^{\tau }\int_{t_{1}}^{\tau
}\int_{0}^{t_{1}}\int_{t_{3}}^{t_{1}}\ldots \int_{0}^{t_{2\bar{n}%
-3}}\int_{t_{2\bar{n}-1}}^{t_{2\bar{n}-3}}q\left( t_{2}\right) \ldots
q\left( t_{2\bar{n}-2}\right) q\left( t_{2\bar{n}}\right) dt_{2\bar{n}}dt_{2%
\bar{n}-1}\ldots dt_{4}dt_{3}dt_{2}dt_{1}  \nonumber
\end{eqnarray}%
for $\bar{n}=2,3,4$, then, the sequence may be written as%
\begin{eqnarray*}
A_{1} &=&\tau \int_{0}^{\tau }q\left( t_{2}\right) dt_{2} \\
A_{2} &=&\int_{0}^{\tau }\int_{0}^{t_{2}}\left( \tau -t_{2}+t_{1}\right)
\left( t_{2}-t_{1}\right) p\left( t_{1}\right) p\left( t_{2}\right)
dt_{1}dt_{2} \\
A_{3} &=&\int_{0}^{\tau }\int_{0}^{t_{6}}\int_{0}^{t_{4}}\left( \tau
-t_{6}+t_{2}\right) \left( t_{6}-t_{4}\right) \left( t_{4}-t_{2}\right)
q\left( t_{2}\right) q\left( t_{4}\right) q\left( t_{6}\right)
dt_{2}dt_{4}dt_{6}
\end{eqnarray*}%
and the general term 
\[
A_{n}=\int_{0}^{\tau }dt_{1}\int_{0}^{t_{1}}dt_{2}\ldots \int_{0}^{t_{\bar{n}%
-1}}\left( \tau -t_{1}+t_{n}\right) \left( t_{1}-t_{2}\right) \ldots \left(
t_{n-1}-t_{n}\right) q\left( t_{1}\right) q\left( t_{2}\right) \ldots
q\left( t_{n}\right) dt_{n}
\]
\end{lemma}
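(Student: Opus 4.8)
The plan is to exploit a structural feature of both multiple integrals defining $A_{\bar{n}}$: the integrand $q(t_2)q(t_4)\cdots q(t_{2\bar{n}})$ depends only on the even-indexed variables, whereas the odd-indexed variables $t_1,t_3,\dots,t_{2\bar{n}-1}$ occur solely in the limits of integration. The natural first move is therefore to integrate out the odd variables, treating the even ones as parameters; this collapses each $2\bar{n}$-fold integral into an $\bar{n}$-fold integral over $t_2,t_4,\dots,t_{2\bar{n}}$ whose integrand is $q(t_2)\cdots q(t_{2\bar{n}})$ multiplied by a polynomial weight built from the lengths of the intervals swept out by the odd variables.

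First I would unwind the nested limits of the \emph{first} term. Reading from the outside in, the limits interlock in a zig-zag so as to force the ordering $t_2\le t_1\le t_4\le t_3\le\cdots\le t_{2\bar{n}}\le t_{2\bar{n}-1}\le\tau$; in particular the even variables are increasing, $t_2\le t_4\le\cdots\le t_{2\bar{n}}$, and once they are fixed each odd variable lands in an interval bounded by two consecutive even variables, $t_{2j-1}\in[t_{2j},t_{2j+2}]$ for the interior indices and $t_{2\bar{n}-1}\in[t_{2\bar{n}},\tau]$ at the top. Since these intervals are disjoint, the odd integrations factorise and yield the weight $(t_4-t_2)(t_6-t_4)\cdots(t_{2\bar{n}}-t_{2\bar{n}-2})(\tau-t_{2\bar{n}})$. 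The same analysis of the \emph{second} term gives the reversed ordering $t_{2\bar{n}}\le\cdots\le t_4\le t_2$ with weight $(t_2-t_4)(t_4-t_6)\cdots(t_{2\bar{n}-2}-t_{2\bar{n}})\,t_{2\bar{n}}$, the only difference being that the lowest odd variable is pinned against $0$ rather than $\tau$, producing the boundary factor $t_{2\bar{n}}$ (the smallest even variable) instead of $\tau-t_{2\bar{n}}$.

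Next I would put the two contributions over a common region by reversing the labels of the even variables in the second term, which is legitimate because the product of $q$'s is symmetric in its arguments; both integrals then run over $t_2\le t_4\le\cdots\le t_{2\bar{n}}$. The interior gap factors $(t_4-t_2)(t_6-t_4)\cdots(t_{2\bar{n}}-t_{2\bar{n}-2})$ are common to both, while the two boundary factors add: $(\tau-t_{2\bar{n}})+t_2=\tau-t_{2\bar{n}}+t_2$. This is exactly the claimed weight $(\tau-t_{2\bar{n}}+t_2)(t_4-t_2)\cdots(t_{2\bar{n}}-t_{2\bar{n}-2})$, and a final relabelling of the even variables in decreasing order as $t_1\ge t_2\ge\cdots\ge t_n$ casts it in Lyapunov's form with factor $(\tau-t_1+t_n)(t_1-t_2)\cdots(t_{n-1}-t_n)$. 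The case $A_1$ already displays the mechanism in miniature: there are no gap factors, the two terms contribute weights $\tau-t_2$ and $t_2$, and their sum $\tau$ gives $A_1=\tau\int_0^\tau q(t_2)\,dt_2$.

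The main obstacle is the bookkeeping in the middle step, namely verifying rigorously that after the even variables are fixed the odd variables genuinely decouple into independent intervals with the stated endpoints. This hinges on the precise nesting: the upper limit of each even variable and the range of the neighbouring odd variable interlock so that every inequality coupling two odd variables (such as $t_1\le t_3$, which in fact follows from $t_1\le t_4\le t_3$) is automatically implied by the even ordering and therefore imposes no further restriction. Handling the opposite orientations of the two terms and confirming that the surviving boundary factors are precisely $\tau-t_{2\bar{n}}$ and $t_2$ is the delicate part; once this is in place the relabelling and the induction on $\bar{n}$ are routine.
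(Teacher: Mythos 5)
Your proposal is correct and follows essentially the same route as the paper: the paper likewise integrates out the odd-indexed variables (whose chain ordering $t_2\le t_1\le t_4\le t_3\le\cdots\le t_{2\bar n}\le t_{2\bar n-1}\le\tau$ makes them decouple into disjoint intervals), obtains the gap weights $(t_4-t_2)\cdots(t_{2\bar n}-t_{2\bar n-2})$ with boundary factors $\tau-t_{2\bar n}$ and $t_{2\bar n}$ respectively, and adds the two terms after relabelling to get $\left(\tau-t_{2\bar n}+t_2\right)$. The only difference is presentational: the paper carries out these rearrangements explicitly for $A_1$, $A_2$, $A_3$ and asserts the general term, whereas you articulate the general mechanism (including the key observation that couplings between odd variables such as $t_1\le t_3$ are implied by the even ordering) for arbitrary $\bar n$.
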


\begin{proof}
By rearranging the integration variables of the three first coefficients
given in (\ref{re1}) we get

\begin{eqnarray*}
A_{1} &=&\int_{0}^{\tau }\int_{0}^{t_{1}}q\left( t_{2}\right)
dt_{2}dt_{1}+\int_{0}^{\tau }\int_{t_{1}}^{\tau }q\left( t_{2}\right)
dt_{2}dt_{1} \\
&=&\int_{0}^{\tau }\int_{0}^{\tau }q\left( t_{2}\right) dt_{2}dt_{1} \\
&=&\int_{0}^{\tau }\int_{0}^{\tau }q\left( t_{2}\right) dt_{1}dt_{2} \\
&=&\tau \int_{0}^{\tau }q\left( t_{2}\right) dt_{2}
\end{eqnarray*}

\begin{eqnarray*}
A_{2} &=&\int_{0}^{\tau }\int_{0}^{t_{1}}\int_{t_{1}}^{\tau
}\int_{t_{1}}^{t_{3}}q\left( t_{2}\right) q\left( t_{4}\right)
dt_{4}dt_{3}dt_{2}dt_{1}+\int_{0}^{\tau }\int_{t_{1}}^{\tau
}\int_{0}^{t_{1}}\int_{t_{3}}^{t_{1}}q\left( t_{2}\right) q\left(
t_{4}\right) dt_{4}dt_{3}dt_{2}dt_{1} \\
&=&\int_{0}^{\tau }\int_{0}^{t_{4}}\left( \tau -t_{4}\right) \left(
t_{4}-t_{2}\right) q\left( t_{2}\right) q\left( t_{4}\right)
dt_{2}dt_{4}+\int_{\delta }^{\tau }\int_{0}^{t_{4}}\left( t_{4}-t_{2}\right)
t_{2}q\left( t_{2}\right) q\left( t_{4}\right) dt_{2}dt_{4} \\
&=&\int_{0}^{\tau }\int_{0}^{t_{2}}\left( \tau -t_{2}+t_{1}\right) \left(
t_{2}-t_{1}\right) q\left( t_{1}\right) q\left( t_{2}\right) dt_{1}dt_{2}
\end{eqnarray*}

\begin{eqnarray*}
A_{3} &=&\int_{0}^{\tau }\int_{0}^{t_{1}}\int_{t_{1}}^{\tau
}\int_{t_{1}}^{t_{3}}\int_{t_{3}}^{\tau }\int_{t_{3}}^{t_{5}}q\left(
t_{2}\right) q\left( t_{4}\right) q\left( t_{6}\right)
dt_{6}dt_{5}dt_{4}dt_{3}dt_{2}dt_{1}+ \\
&&\qquad\qquad\qquad\qquad\qquad+\int_{0}^{\tau }\int_{t_{1}}^{\tau
}\int_{0}^{t_{1}}\int_{t_{3}}^{t_{1}}\int_{0}^{t_{3}}\int_{t_{5}}^{t_{3}}q%
\left( t_{2}\right) q\left( t_{4}\right) q\left( t_{6}\right)
dt_{6}dt_{5}dt_{4}dt_{3}dt_{2}dt_{1} \\
&=&\int_{0}^{\tau }\int_{0}^{t_{6}}\int_{0}^{t_{4}}\left( \tau -t_{6}\right)
\left( t_{6}-t_{4}\right) \left( t_{4}-t_{2}\right) q\left( t_{2}\right)
q\left( t_{4}\right) q\left( t_{6}\right) dt_{2}dt_{4}dt_{6}+ \\
&&\qquad\qquad\qquad\qquad\qquad+\int_{0}^{\tau }\int_{0}^{t_{2}}\int_{0}^{t_{4}}\left( t_{4}-t_{6}\right)
\left( t_{2}-t_{4}\right) t_{6}q\left( t_{2}\right) q\left( t_{4}\right)
q\left( t_{6}\right) dt_{6}dt_{4}dt_{2} \\
&=&\int_{0}^{\tau }\int_{0}^{t_{6}}\int_{0}^{t_{4}}\left( \tau
-t_{6}+t_{2}\right) \left( t_{6}-t_{4}\right) \left( t_{4}-t_{2}\right)
q\left( t_{2}\right) q\left( t_{4}\right) q\left( t_{6}\right)
dt_{2}dt_{4}dt_{6}
\end{eqnarray*}%
and so on. The general term is then%
\[
A_{n}=\int_{0}^{\tau }dt_{1}\int_{0}^{t_{1}}dt_{2}\ldots \int_{0}^{t_{\bar{n}%
-1}}\left( \tau -t_{1}+t_{n}\right) \left( t_{1}-t_{2}\right) \ldots \left(
t_{n-1}-t_{n}\right) q\left( t_{1}\right) q\left( t_{2}\right) \ldots
q\left( t_{n}\right) dt_{n} 
\]%
and the lemma follows.
\end{proof}

\section*{References}
%\bibliography{bibliografia}
\bibliography{A Novel Discriminant Approximation of Periodic Differential Equations}

\end{document}